\newcommand{\dT}{\frac{\partial\phantom{\theta}}{\partial\theta}}
\newcommand{\dP}{\frac{\partial\phantom{\phi}}{\partial\phi}}
\newcommand{\sem}{\textnormal{sem}}
\newtheorem{theorem}{Theorem}[section]
\newtheorem{example}[theorem]{Example}
\newtheorem{lemma}[theorem]{Lemma}
\newtheorem{proposition}[theorem]{Proposition}
\numberwithin{equation}{section}
\newcommand{\dM}{b}
\def\RR{{\mathbb{R}}}
\def\M{\mathcal{M}}
\def\NN{{\mathbb{N}}}
\def\N{{\mathbb{N}}}
\newcommand{\norm}[2]{{\left\|#1\right\|}_{#2}}
\newcommand{\inv}{^{-1}}
\newcommand{\mc}[1]{\mathcal{#1}}
\newcommand{\mbb}[1]{\mathbb{#1}}
\newcommand{\ol}[1]{\overline{#1}}
\newcommand{\ti}[1]{\tilde{#1}}
\newcommand{\tn}[1]{\textnormal{#1}}
\renewcommand{\l}{\left}
\renewcommand{\r}{\right}
\newcommand{\dV}{ \ d\textup{vol}}
\newcommand{\pderiv}[2]{\frac{\partial #1}{\partial #2}}
\newcommand{\gdot}[1]{\left\langle #1\right\rangle_g}
\newcommand{\veps}{\epsilon}
\begin{document}

\title{Diffusion Maps for Embedded Manifolds with Boundary with Applications to PDEs}

\author[rvt]{Ryan Vaughn \corref{cor}}
\ead{rv2152@nyu.edu}
\author[rvt2]{Tyrus Berry}
\ead{tberry@gmu.edu}
\author[rvt2]{Harbir Antil}
\ead{hantil@gmu.edu}
\cortext[cor]{Corresponding author}
\address[rvt]{Courant Institute for the Mathematical Sciences, New York University, New York, NY 10012}
\address[rvt2]{Department of Mathematical Sciences, George Mason University, Fairfax, VA 22030, USA.}
\address[rvt2]{Department of Mathematical Sciences, George Mason University, Fairfax, VA 22030, USA.}
\date{\today}

\begin{keyword}
mesh-free, partial differential equations, heat equation, Riemannian manifolds, boundary detection
\end{keyword}


\begin{abstract}
Given only a finite collection of points sampled from a Riemannian manifold embedded in a Euclidean space, in this paper 
we propose a new method to numerically solve elliptic {and parabolic} partial differential equations (PDEs) supplemented with boundary 
conditions.  Since the construction of triangulations on unknown manifolds can be both difficult and expensive,  
both in terms of computational and data requirements, our goal is to solve these problems without a triangluation. Instead, we rely only on using the sample points to define quadrature formulas on the unknown manifold. 
Our main tool is the diffusion maps algorithm. We re-analyze this well-known method in a variational
sense for manifolds with boundary.  Our main result is that the variational diffusion maps graph Laplacian is a consistent estimator of the Dirichlet energy on the manifold. This improves upon previous results and provides a rigorous justification of the well-known relationship 
between diffusion maps and the Neumann eigenvalue problem.  Moreover, using semigeodesic coordinates we derive the first uniform asymptotic expansion of the diffusion maps kernel integral operator for manifolds with boundary.  This expansion relies on a novel lemma which relates the extrinsic Euclidean distance to the coordinate norm in a normal collar of the boundary.  We then use a recently developed method of estimating the distance to boundary function (notice that the boundary location is assumed to be unknown) to construct a consistent estimator for boundary integrals.  Finally, by combining these various estimators, we illustrate how to impose Dirichlet {and Neumann} 
conditions for some common PDEs based on the Laplacian. Several numerical examples illustrate our theoretical findings. 
\end{abstract}
\maketitle
\section{Introduction}

The goal of this paper is to analyze the diffusion maps algorithm in a weak (variational) form and to introduce a completely 
rigorous method to solve elliptic {and parabolic} partial differential equations (PDEs) with boundary conditions. These PDEs are posed on an $m$-dimensional Riemannian manifold $(\mathcal{M},g)$ embedded in an ambient Euclidean space via $\iota:\mathcal{M}\to\RR^d$.  We assume that the Riemannian metric $g$ on $\M$ is inherited from the ambient space $\RR^d$ via the embedding.

Motivated by applications to machine learning or emergent structures in high-dimensional problems such as inertial manifolds, we will assume that we have no explicit description of the embedded Riemannian manifold.  Instead, we assume only that we have a collection of sample points, $\{x_i\}_{i=1}^N \subset \iota(\mathcal{M}) \subset \RR^d$ which, together with equal weights, form a consistent weighted quadrature rule.  Namely, for any square integrable function $f \in L^2(\mathcal{M},g)$, we assume that 
\begin{align}\label{quad} \lim_{N\to\infty} \frac{1}{N}\sum_{i=1}^N f(x_i) = \int_{\mathcal{M}} f(x)q(x) \, d\textup{vol} \end{align}
almost surely. In the statistical context the weight function $q$ is called the sampling density.  Ultimately our method will be independent of $q$, meaning that we do not require any specific density and if a grid of samples is used this grid is not required to be uniformly spaced.  This is critical when the nodes are data points (which are typically not sampled from the density that corresponds to the volume form), but is also an advantage for synthetic data sets where creating uniform grids on manifolds can be challenging (uniform in this context means that \eqref{quad} holds with a constant $q$).  There are several situations where this may arise:
\begin{itemize}
\item Random data on an unknown manifold, where $q$ is the sampling density.
\item Attractors and inertial manifolds for dynamical systems, where $q$ is the invariant measure.
\item Known but complex domains that are difficult to mesh, and difficult to sample uniformly.
\item Known but moderate dimensional manifolds if one cannot afford a mesh.
\end{itemize}

There is a wide literature starting with the Laplacian eigenmaps \cite{laplacianeigenmaps} and the diffusion maps \cite{diffusion} algorithms which give a method of approximating the intrinsic Laplacian operator on an unknown manifold.  In this manuscript we let $\Delta$ be the negative definite Laplacian, also know as the Laplace-Beltrami operator.  The basic strategy for estimating the Laplacian starts with a kernel function $K(\epsilon,x,y)$ which approximates the heat kernel on a manifold, for example $K(\epsilon,x,y) = e^{-\frac{|x-y|^2}{4\epsilon^2}}$, where we choose $\epsilon^2$ in the denominator so that $\epsilon$ has units of distance.  Then for any $f$ we can estimate the integral operator  
\begin{equation}\label{kernelintegral}\mathcal{I}f(x) := \int_{y\in \mathcal{M}} K(\epsilon,x,y)f(y)q(y) \, d\textup{vol} \end{equation}
by our quadrature formula
\begin{align}\label{kernelop} \mathcal{K}f(x) \equiv \frac{1}{N} \sum_{i=1}^N K(\epsilon,x,x_i)f(x_i)  
  = \mathcal{I}f(x) + \textup{Error}_{\textup{Quad}}(N,f,q) \end{align}
where the $\textup{Error}_{\textup{Quad}}$ term is assumed to go to zero as $N\to \infty$.  In fact, when the data, $x_i$, are independent identically distributed random variables it can be shown that $\textup{Error}_{\textup{Quad}} = \mathcal{O}(N^{-1/2})$ with high probability \cite{singer,variablebandwidth}.  The qualifier `with high probability' is required because the data set is random and there is a finite (but extremely small) probability of all the data points landing in for instance, a small ball on the manifold. This would clearly lead to a significant error in the quadrature formula in most cases.  However, the probability of all such high-error events can be made arbitrarily small as $N^{-1/2}$ approaches zero \cite{variablebandwidth, singer}.  Finally, notice that if we represent $f$ by a vector $\vec f_i = f(x_i)$ then we can represent $\mathcal{K}$ with the matrix {with entries} ${\bf K}_{ij} = K(\epsilon,x_i,x_j)$ so that $({\bf K}\vec f \,)_i = \mathcal{K}f(x_i)$.

The intuition behind the kernel function is that the exponential decay localizes the integral to an ${\epsilon}$-ball around $x$, and in this neighborhood the Euclidean distance $d_{\RR^d}(x,y)=\norm{x-y}{\RR^d}$ is close to the geodesic distance $d_g(x,y)$ (under appropriate assumptions on the manifold and embedding).  Thus, as $\epsilon \to 0$ the integral $\mathcal{I}f(x)$ can be shown \cite{laplacianeigenmaps} to converge to the semigroup $e^{\epsilon^2\Delta}$ associated to the intrinsic (negative definite) Laplace-Beltrami operator $\Delta$, so that
\[ \mathcal{K}f(x)  = m_0 \epsilon^{m} e^{\epsilon^2\Delta}(fq)(x) + \mathcal{O}(\epsilon^{m+2}) + \textup{Error}_{\textup{Quad}}(N,f,q). \]
In fact, a more detailed asymptotic analysis \cite{diffusion,hein2007convergence} reveals that for any kernel function of the form
\[ K(\epsilon,x,y) = k\left(\frac{\norm{x-y}{\RR^d}^2}{\epsilon^2}\right) \] 
where $k:[0,\infty)\to [0,\infty)$ is sufficiently regular and has exponential decay ($k(z)\leq a^{-bz}$ for some $a,b>0$), we have
\begin{align}\label{interiorexpansionintro} \epsilon^{-m}\mathcal{K}f(x)  
    &= m_0 f(x)q(x) + \epsilon^2 \frac{m_2}{2}\left(\omega(x)f(x)q(x) + \Delta(fq)(x)\right) \nonumber \\ 
    &\hspace{15pt}+ \mathcal{O}(\epsilon^4) + \epsilon^{-m}\textup{Error}_{\textup{Quad}}(N,f,q). 
\end{align}
for all $x$ with distance greater than $\epsilon$ from the boundary.  The constants $m_0$ and $m_2$ are the \textit{zeroth} and \textit{second moments} of the chosen kernel functions, namely,
\begin{align}\label{coeff}
m_0 &:= \int_{z \in \mathbb{R}^m} k(|z|^2) \, dz \hspace{20pt} \textup{and} \hspace{20pt} m_2 := \int_{z \in \mathbb{R}^m} z_i^2 k(|z|^2) \, dz
\end{align}
where the domain of integration is determined by the intrinsic dimension of the manifold, $m$, rather than the extrinsic dimension of the embedding space, $d$.

The expansion \eqref{interiorexpansionintro} is commonly used for estimating the density function \cite{manifoldKDE3,manifoldKDE2, manifoldKDE1} by applying the operator $\mathcal{K}$ to the constant function $f \equiv 1$ to find,
\[  \frac{\epsilon^{-m}}{m_0}\mathcal{K}1(x) = q(x) + \mathcal{O}(\epsilon^2) + \frac{\epsilon^{-m}}{m_0}\textup{Error}_{\textup{Quad}}(N,f,q). \]
However, it is well-known that for manifolds with boundary this does not hold. This leads to the well known bias of Kernel Density Estimators (KDEs) near the boundary \cite{boundary00,boundary93,boundary96,boundary05,boundary14,boundary99}.  In \cite{BERRY2017}, the authors developed a method to estimate the distance to the boundary of a manifold from data.  They then used this estimate to correct the bias of the KDE near the boundary.  A significant advance of the method of \cite{BERRY2017} is that the location of the boundary does not need to be known beforehand, and it is effectively learned from the data.  

A much more challenging and powerful use of \eqref{interiorexpansionintro} is for estimating the Laplace-Beltrami operator, $\Delta$, and this expansion is the key component of justifying the diffusion maps algorithm \cite{diffusion}.  The Laplace-Beltrami operator is ubiquitous, especially when a physical process is modeled using PDEs.  For such applications, it is critical that one be able to specify the appropriate boundary conditions.  Moreover, while it is widely observed that the diffusion maps algorithm produces Neumann eigenfunctions \cite{diffusion}, this empirical observation has not been adequately explained.  In this paper we will show that the estimator defined by the diffusion maps algorithm is consistent in the weak sense even for manifolds with boundary, and that Neumann eigenfunctions are observed because of the naturality of the Neumann boundary conditions for the eigenproblem.  Finally, allowing arbitrary boundary conditions to be specified requires us to introduce a new tool, namely a \emph{boundary integral estimator}, which may have uses beyond these applications, and the consistency of this estimator is one of our key results.

In order to solve diffusion type PDEs in the weak-sense and specify boundary conditions we need consistent discrete estimators of the following bilinear forms, 
\begin{align}\label{ops} 
    \mathcal{G}(\phi,f) = \int_{\mathcal{M}} \phi f \, d\textup{vol} \; , \hspace{30pt} 
    \mathcal{E}(\phi,f) = \int_{\mathcal{M}} \nabla \phi \cdot \nabla f \, d\textup{vol} \; , \hspace{30pt} 
    \mathcal{B}(\phi,f) = \int_{\partial\mathcal{M}} \phi f \, d\textup{vol}_\partial \; .  
\end{align} 
where $\mathcal{G}$ and $\mathcal{E}$ correspond to mass and stiffness matrices respectively. The $L^2$ inner product on the boundary, $\mathcal{B}$, arises in case of Neumann or Robin boundary conditions \cite{FJSayas_TSBrown_MEHassell_2019a}.
The first bilinear form, $\mathcal{G}$, is simply the $L^2$ inner product on the manifold, which can be represented by a diagonal matrix with entries $D_{ii}=(N q(x_i))^{-1}$ since by \eqref{quad} we have,
\[ \lim_{N\to\infty} \vec\phi^\top D \vec f = \lim_{N\to\infty} \sum_{i=1}^{N} D_{ii}\vec\phi_i \vec f_i = \lim_{N\to\infty}\frac{1}{N}\sum_{i=1}^N \frac{\phi(x_i)f(x_i)}{q(x_i)} = \int_{\mathcal{M}} \phi f \, \dV = \mathcal{G}(\phi,f) \]
where $q$ can be estimated as in \cite{BERRY2017} (the method is summarized in \ref{background}).  

In this paper, we show that the graph Laplacian (as constructed by the diffusion maps algorithm \cite{diffusion}) is a consistent estimator of the Dirichlet energy, $\mathcal{E}$, even for manifolds with boundary (Theorem \ref{thm3}).  Moreover, we introduce a novel consistent estimator for the boundary integral (Theorem \ref{thm2}). These results rely on the first uniform asymptotic expansion of kernel integral operators in a neighborhood of the boundary (Theorem \ref{thm1}).  This in turn requires a subtle new distance comparison (Lemma \ref{lemma2}) which expands the ambient space Euclidean distance between local points near the boundary of an embedded manifold with respect to semigeodesic coordinates.  This new distance comparison is the direct analog of the expansion of the ambient space distance with respect to geodesic normal coordinates \cite{smolyanov2007chernoff} which holds in the interior, but Lemma \ref{lemma2} carries out this expansion for points near the boundary using semigeodesic coordinates.

Recently, the diffusion maps estimate of the Laplacian has been used for solving PDEs such as $-\Delta u = f$ \cite{JohnPDE,JohnPDE2} where $f$ is now the data and one is solving for $u$.  We should note that in \cite{JohnPDE} a more general class of elliptic operators are considered using a more general class of kernel functions introduced in \cite{localkernels}.  In this paper we restrict our attention to the Laplace-Beltrami operator in order to focus on the boundary conditions, however the theory and methods introduced here can also be used to impose new boundary conditions on the operators considered in \cite{JohnPDE}.  We should also point out that \cite{JohnPDE} compared the diffusion maps approach to another popular meshless method based on radial-basis function (RBF) interpolation \cite{RBFmethod}.  The RBF method outperforms the diffusion maps when an appropriate global coordinate system is available in which to form the basis functions.  However, as pointed out in \cite{JohnPDE}, extending the RBF method to arbitrary manifolds would require extensive modifications, such as finding local coordinate systems and approximating the desired differential operators.  The diffusion maps approach provides a large class of operators directly with a global representation.  Thus, when more information about the manifold structure is known, approaches such as \cite{RBFmethod} may have superior results (just as mesh-based methods may have better results when a mesh is available), so it should be emphasized that our focus is on mesh-free methods on an unknown manifold as motivated above.

A key aspect of our approach is that the location of the boundary is also unknown and must be estimated as in \cite{BERRY2017}.  Since the boundary is a measure zero subset, we do not expect any data samples to lie exactly on the boundary, so rather than making a binary choice (`on' or `off' the boundary) we instead locate the boundary implicitly by estimating the distance to the boundary for each data point.  For completeness, we summarize the method of \cite{BERRY2017} for finding the distance to the boundary in \ref{background}. This distance-to-the-boundary function turns out to be the key to building the boundary integral estimator that converges to $\mathcal{B}$.  Together these results yield a collection of consistent estimators for $\mathcal{G,E},$ and $\mathcal{B}$ that can be used to solve the heat equation with various boundary conditions given only a set of points lying on the unknown manifold.

The paper is organized as follows: In Section \ref{prelims} we define the class of manifolds required for our theoretical results and establish bounds on the ratios of the intrinsic distance and the distances in coordinates near the boundary that will be required later. In Section \ref{localization} we use these bounds to show that kernel integral operators of the form \eqref{kernelintegral} with kernels having fast decay can be localized to a small neighborhood of $x$ (up to an error that is small with respect to the bandwidth, $\epsilon$). In Section \ref{expansion} we present the first asymptotic expansion of the kernel integral operator \eqref{kernelintegral} that holds uniformly in a neighborhood of the boundary (meaning $\epsilon$ is independent of $x$).  In Section \ref{boundaryint} we introduce the first boundary integral estimator (making use of the distance to the boundary function) which gives a consistent estimator of $\mathcal{B}$.  In Section \ref{estimators} we use the asymptotic expansion from Section \ref{expansion} together with the boundary integral estimator in Section \ref{boundaryint} to construct consistent estimators of the operators in \eqref{ops}.  By analyzing the existing Laplacian estimators in light of our new results we show why these standard constructions result in Neumann boundary conditions, as observed empirically going back to \cite{diffusion}.  Finally in Section \ref{PDEapps} we show how to impose standard boundary conditions using these operator estimators. 

\section{Preliminaries}\label{prelims}
The contents of this section are devoted to establishing some coordinate computations which will be used several times in subsequent sections. We begin by setting some notation and recalling some fundamental properties from Riemannian geometry. 

In what follows,  we let $\M$ be a $C^3$  compact manifold with nonempty boundary smoothly and properly embedded into $\RR^d$ via the map $\iota: \M\rightarrow \RR^d$. We endow $\M$ with the pullback metric $g=\iota^*g^{\RR^n}$ so that $\iota$ is an isometric embedding. We let $\dV$ be the Riemannian volume element defined by this metric and we let $q:\M\rightarrow \RR$ denote a $C^3$ probability density function that is absolutely continuous with respect to $\dV$.

We also recall that in any local coordinates $(s^1,...,s^m)$ on $\M$, the pullback metric evaluated on vector fields $X=x^i\partial_i ,Y=y^j\partial_j$ is given by
\[
\gdot{x^i\partial_i, y^j \partial_j} = \delta_{\alpha \beta} \pderiv{\iota^\alpha}{s^i} \pderiv{\iota^\beta}{s^j}x^iy^j
\]
where we are using Einstein notation so that indices appearing in both a superscript and subscript are implied to be summed over a common index. For convenience, we let Greek characters such as $\alpha, \beta$ range from $1$ to $d$ and Roman characters such as $i,j$ range from $1$ to $m = \dim(\M)$.

We recall that the Riemannian metric on $\M$ induces a metric space structure on $\M$ with the metric by letting $d_g(x,y)$ denote the infimum of all piecewise smooth regular curves connecting $x$ to $y$ in $\M$. If $x$ and $y$ are not in the same component, we define $d_g(x,y) = +\infty$.

Recall that the Laplace-Beltrami operator $\Delta$ on $\M$ is defined by $\Delta f  = \tn{Div}(\tn{grad } f)$ and has coordinate expression
\[
\Delta f = \frac{1}{\sqrt{|\det{g}|}} \partial_i\l(\sqrt{|\det g|}g^{ij}\partial_j f\r)
\]
in any coordinate system. In particular, when the metric is flat, $g_{ij} = \delta_{ij}$ this corresponds to the standard Euclidean Laplacian 
\[
\Delta_{\RR^m} = \sum_{i=1}^m \pderiv{^2}{x^i\partial x^i}.
\]

\subsection{Kernel Regularity}
We let $k:\RR_{\geq 0} \rightarrow \RR_{\geq 0}$ be a $C^2$ real valued function. We furthermore assume for the remainder of the paper that $k, \l|k'\r|$, and $\l|k''\r|$ have exponential decay and $k(0) = 0$. We also assume that $q:\M\rightarrow \RR$ is a strictly positive $C^3$ probability distribution function which is absolutely continuous with respect to the Riemannian volume element. These assumptions are identical to those found in \cite{hein2007convergence}.

We define a family of kernel averaging operators $\mc{I}_\veps$ indexed by parameter $\veps \in (0,\infty)$ by:
\[
\mc{I}_\veps f = \int_{\M} k\l(\frac{\norm{x-y}{\RR^n}^2}{\veps^2}\r)f(y)q(y) \dV.
\]

We remark that this choice of kernel operator differs from the one originally used in \cite{diffusion} in that we choose to use squared distance over $\veps^2.$ This is simply to avoid issues of smoothness when $x=y$. For simplicity of exposition, we will state the results of the following section assuming that $q\equiv 1$. This presents no loss of generality, since one may make the substitution $f \mapsto fq$ and obtain the needed results. 

\subsection{Normal Coordinates}
Normal coordinates are a set of coordinates which are used extensively in the asymptotic analysis of manifold learning algorithms on Riemannian manifolds. In this section, we review several of the important properties used later in the paper.

Recall that the exponential map based at a point $x$ is a mapping $\exp_x:U\subseteq T_x\M\rightarrow \M$ which maps a tangent vector $v$ to the endpoint of the geodesic based at $x$ with initial velocity $v$. On a small star-shaped neighborhood of $T_x\M$, $\exp_x$ is a diffeomorphism onto its image. The smallest value $\tn{inj}(x)>0$ such that $B_{\tn{inj}(x)}(0)\subseteq T_x\M$ is a diffeomorphism is called the \textit{injectivity radius} of $\M$ at $x$. The infimum of injectivity radii over all $x\in \M$ is called the \textit{injectivity radius of} $\M$  and it can be shown that for a compact manifold without boundary, $\textnormal{inj}(\M)$ is positive.

By identifying $T_x\M$ with $\RR^m$ using an orthonormal basis, one can use the exponential map to construct Riemannian normal coordinate charts centered at $x\in \M$, which are a system of coordinates $(s^1,...,s^m)$ with the following nice properties:
\begin{proposition}
Let $x\in \M$ and let $(s^1,..,s^m)$ denote a system of normal coordinates centered about $x$. Then
\begin{enumerate}
\item The coordinates of $x$ are $(0,..,0)$.
\item The components of the metric at $x$ are $g_{ij}(x) = \delta_{ij}.$
\item For every vector $v = v^i \partial_i$ at $x$, the radial geodesic with initial velocity $v$ is represented in coordinates by:
\[
\gamma_v(t) = t(v^1,...,v^m).
\]
\item The Christoffel symbols and first partial derivatives of $g_{ij}$ vanish at $x$.
\end{enumerate}
\end{proposition}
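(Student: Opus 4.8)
The plan is to unwind the definition of the normal coordinate chart and then exploit the fact that radial geodesics become straight lines through the origin in these coordinates, feeding this into the geodesic equation to control the Christoffel symbols and hence the first derivatives of the metric.

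First I would make the construction concrete: fix an orthonormal basis $(e_1,\dots,e_m)$ of $(T_x\M,g_x)$, let $E:T_x\M\to\RR^m$ be the induced linear isomorphism $\sum_i v^i e_i\mapsto(v^1,\dots,v^m)$, and on a star-shaped neighborhood $U\subseteq T_x\M$ on which $\exp_x$ restricts to a diffeomorphism onto its image, set $\varphi:=E\circ\exp_x^{-1}$. Claim (a) is then immediate, since $\exp_x(0)=x$ forces $\varphi(x)=0$. For (c), the radial geodesic with initial velocity $v=\sum_i v^i e_i$ is $\gamma_v(t)=\exp_x(tv)$ by the very definition of $\exp_x$, so $\varphi(\gamma_v(t))=E(tv)=t(v^1,\dots,v^m)$; here one uses $d(\exp_x)_0=\mathrm{id}_{T_x\M}$, which identifies the coordinate vector field $\partial_i$ at $x$ with $e_i$, so that the component $v^i$ of the initial velocity read off in the chart agrees with the $v^i$ in the stated formula. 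Claim (b) follows at once from this identification: $g_{ij}(x)=g_x(\partial_i|_x,\partial_j|_x)=\gdot{e_i,e_j}=\delta_{ij}$.

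The only step with genuine content is (d). Because $t\mapsto\varphi(\gamma_v(t))$ is affine in $t$, its coordinate components satisfy $\ddot\gamma_v^k\equiv 0$, so the geodesic equation $\ddot\gamma^k+\Gamma^k_{ij}(\gamma)\dot\gamma^i\dot\gamma^j=0$ collapses to $\Gamma^k_{ij}(\gamma_v(t))\,v^iv^j=0$ for all sufficiently small $t$; evaluating at $t=0$ gives $\Gamma^k_{ij}(x)\,v^iv^j=0$ for every $v\in\RR^m$. Since the Levi--Civita connection is torsion free, $\Gamma^k_{ij}$ is symmetric in $(i,j)$, so polarizing this identically vanishing quadratic form in $v$ yields $\Gamma^k_{ij}(x)=0$ for all $i,j,k$. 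Finally, compatibility of the Levi--Civita connection with $g$ gives $\partial_k g_{ij}=\Gamma^l_{ki}g_{lj}+\Gamma^l_{kj}g_{il}$, so the vanishing of all Christoffel symbols at $x$ forces $\partial_k g_{ij}(x)=0$ for all indices; together with $\Gamma^k_{ij}(x)=0$ this is exactly (d). (Equivalently, one inserts $\Gamma^k_{ij}(x)=0$ and $g^{kl}(x)=\delta^{kl}$ into the Koszul formula $\Gamma^k_{ij}=\tfrac12 g^{kl}(\partial_i g_{jl}+\partial_j g_{il}-\partial_l g_{ij})$ and takes a cyclic combination.)

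The main obstacle is bookkeeping rather than conceptual: one must be sure the geodesic equation and the connection formulas above are actually available in this chart, which needs $g$ to be $C^2$ and the Christoffel symbols $C^1$ --- guaranteed by the $C^3$ embedding hypothesis, which is also what makes $\exp_x$ a local diffeomorphism in the first place --- and one must keep the index symmetries ($\Gamma^k_{ij}=\Gamma^k_{ji}$ and $g_{ij}=g_{ji}$) straight through the polarization step so that no spurious constants survive.
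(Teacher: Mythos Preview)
Your proof is correct and is the standard textbook argument (essentially the one in Lee's \emph{Riemannian Manifolds}); the paper itself does not supply a proof of this proposition but simply states it as a known fact about normal coordinates before moving on to the distance comparison. There is nothing to compare here beyond noting that your write-up is self-contained where the paper defers to the literature.
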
 

In particular, the geodesic distance between $x$ and a point $y$ in normal coordinates corresponds to the 2-norm of the coordinate representative $s$ of $y$ in normal coordinates:
\[
d_g(x,y) = \norm{s}{\RR^m}.
\] 
This expression for the intrinsic distance is the foundation of the asymptotic expansion \eqref{interiorexpansion} in \cite{diffusion} as well as many related papers \cite{variablebandwidth,BERRY2017,hein2007convergence,wu2018locally,singer2012vector,gao2021diffusion}. 

In the case of manifolds with boundary, we remark that the exponential map may be less well-behaved near the boundary than in the non boundary case. Since geodesics may intersect the boundary, there is no longer a nice one-one relationship between initial velocity vectors and geodesics through a given point. 

As an illustrative example of this behavior, consider the closed two dimensional annulus $A \subseteq\RR^2$ and let $x$ be an interior point of $A$, $y$ be a boundary point for which the straight line segment $\ol{xy}$ in $\RR^2$ is not contained in $A$.

\subsection{The Normal Collar and Semigeodesic Coordinates}
We instead use \textit{semigeodesic coordinates}, which will be more amenable to calculations for points near the boundary.  We outline the needed results here, for more details on semigeodesic coordinates see \cite{LeeRM}. Since $\M$ is compact, it admits a \textit{normal collar} \cite{LeeRM}, which is a mapping $\phi: \partial \M \times [0,r_C)\rightarrow \M$ defined by:
\[
\phi(x,t) = \exp_x(-t\eta_x)
\]
where $r_C>0$, and $-\eta_x$ is the inward-facing unit normal vector field at $x$. Such a mapping is a diffeomorphism onto its image, which we will denote as $\mc{N}$. For each $t\in [0,r_C)$, we note that the set $\partial \M_t := \phi\inv(\partial \M \times \{t\})$ is the hypersurface of points distance $t$ from the boundary. Such $\partial \M_t$ are embedded submanifolds of $\M$ for each $t\in [0,r_C)$.

Inside of the normal collar, we can now construct semigeodesic coordinates centered at $x$. To do so, one first fixes a point $x$ in the normal collar, and constructs normal coordinates $(u^1,...,u^{m-1})$ in the $(m-1)$-dimensional hypersurface parallel to $\partial M$ which intersects $x$. We shall refer to such a hypersurface as $\partial \M(x)$. One then uses the $m$-th coordinate $u^m$ to parameterize the geodesic distance in $\M$ from $\partial \M(x)$. Thus semigeodesic coordinates are formed through a composition of the exponential map of $\partial \M$ and the inward-facing exponential map $\exp_x(-u^m\eta)$ of $\M$. We now list a few of their properties in contrast to the previous section.

\begin{proposition}
Let $x\in \M$ and let $(u^1,..,u^m)$ denote a system of semigeodesic coordinates centered at $x$. Then
\begin{enumerate}
\item The coordinates of $x$ are $(0,..,0)$.
\item The components of the metric at $x$ are $g_{ij}(x) = \delta_{ij}.$
\item For every vector $v = v^i \partial_i$ at $x$, the radial geodesic in $\partial \M_t$ with initial velocity $\sum_{i=1}^{m-1}v^i$ is represented in coordinates by:
\[
\gamma(t) = t(v^1,...,v^{m-1},0).
\]
The geodesic starting at $x$ which intersects each $\partial \M_t$ orthogonally is represented in coordinates as:
\[
\gamma(t) = t(0,...,0,v^m).
\]
\end{enumerate}
\end{proposition}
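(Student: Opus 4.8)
The plan is to verify the three properties directly from the construction of semigeodesic (Fermi) coordinates centered at $x$, which I first recall. Since $x$ lies in the normal collar $N$, it belongs to a unique hypersurface $\partial\M^t$. Equip $\partial\M^t$ with the induced metric and choose a normal coordinate chart $(u^1,\dots,u^{m-1})$ centered at $x$; this exists because $\partial\M^t$ is a compact embedded submanifold with injectivity radius at least $r_B>0$. Let $\nu$ be the unit normal field of $\partial\M^t$ in $\M$ pointing toward the boundary (the restriction of $\eta$), and define the chart $(u^1,\dots,u^m)$ near $x$ by sending the point with coordinates $(u^1,\dots,u^{m-1},u^m)$ to $\exp_q(u^m\nu_q)$, where $q\in\partial\M^t$ is the point with $\partial\M^t$-normal coordinates $(u^1,\dots,u^{m-1})$. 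After shrinking the neighborhood this map is a diffeomorphism onto its image, so $(u^1,\dots,u^m)$ is a genuine coordinate system, and I would record that the slice $\{u^m=0\}$ is exactly $\partial\M^t$ carrying its normal coordinates.

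Property (1) is then immediate: $x$ is the center of the normal coordinates on $\partial\M^t$ (so $u^1=\cdots=u^{m-1}=0$) and $x$ lies on $\partial\M^t$ (so $u^m=0$). For property (2) I would invoke the Gauss lemma for the normal exponential map: since each $u^m$-curve is a unit-speed geodesic issuing orthogonally from $\partial\M^t$, one gets $g_{mm}\equiv 1$ and $g_{am}\equiv 0$ for $a=1,\dots,m-1$ throughout the chart, in particular at $x$. Because $(u^1,\dots,u^{m-1})$ are Riemannian normal coordinates on $(\partial\M^t,\iota^*g|_{\partial\M^t})$ centered at $x$, the previous proposition gives $g_{ab}(x)=\delta_{ab}$ for $a,b=1,\dots,m-1$. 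Combining the three blocks yields $g_{ij}(x)=\delta_{ij}$.

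For property (3), the first assertion follows from the defining property of normal coordinates on $\partial\M^t$: the curve $t\mapsto t(v^1,\dots,v^{m-1},0)$ stays in the slice $\{u^m=0\}=\partial\M^t$, where it is precisely the radial geodesic of the induced metric with initial velocity $\sum_{i=1}^{m-1}v^i\partial_i$ (note this is a geodesic of $\partial\M^t$, not in general of $\M$). The second assertion is built into the construction, since $t\mapsto(0,\dots,0,tv^m)$ is by definition the curve $t\mapsto\exp_x(tv^m\nu_x)$, the geodesic of $\M$ leaving $x$ in the normal direction. To obtain orthogonality to each $\partial\M^{t'}$, I would identify the coordinate slices $\{u^m=c\}$ with neighborhoods in the collar hypersurfaces $\partial\M^{t+c}$, and then appeal once more to the Gauss lemma, which says the $u^m$-curves meet every slice $\{u^m=c\}$ orthogonally.

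The routine identities above are not the difficulty; the step I expect to require genuine care is the identification of the slices $\{u^m=c\}$ with the collar hypersurfaces $\partial\M^{t+c}$. This amounts to showing the normal geodesic field is consistent across the entire collar, i.e.\ that a geodesic hitting $\partial\M$ orthogonally remains orthogonal to every level set $\partial\M^{t'}$ of the distance-to-$\partial\M$ function — the Gauss-lemma argument applied to that distance function — so that flowing $\partial\M^t$ for arc length $c$ along $\nu$ exactly reproduces $\partial\M^{t+c}$. One must also track the range of $c$ (and the size of the $u^1,\dots,u^{m-1}$ neighborhood) for which the slice remains inside $N$, so that $\partial\M^{t+c}$ is defined and the statement is non-vacuous.
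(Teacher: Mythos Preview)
The paper does not supply its own proof of this proposition: it is stated as a known property of semigeodesic (Fermi) coordinates, with references to \cite{LeeRM,schick2001manifolds,hein2007convergence} for details. Your proposal is therefore not competing with a proof in the paper but rather filling in what the paper omits.

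Your argument is correct and follows the standard textbook route (essentially the treatment in Lee's \emph{Riemannian Manifolds}). The decomposition into (i) normal coordinates on the hypersurface $\partial\M^t$ for the tangential block and (ii) the generalized Gauss lemma for the normal exponential map to obtain $g_{mm}\equiv 1$ and $g_{am}\equiv 0$ is exactly right, and the two geodesic assertions in (3) are read off from the construction as you describe. One small convention mismatch: the paper orients $u^m$ along the \emph{inward}-pointing normal $-\eta_x$ (see the remark following the proposition), whereas you take $\nu$ to be the restriction of $\eta$; this is harmless for the proof but worth aligning with the paper's convention.

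Your final paragraph correctly isolates the only nontrivial point, namely that the slices $\{u^m=c\}$ coincide with the collar hypersurfaces $\partial\M^{t+c}$. This follows because the distance-to-$\partial\M$ function has unit gradient on the collar, so its gradient flow lines are unit-speed geodesics orthogonal to every level set; equivalently, the normal collar diffeomorphism $\phi(p,s)=\exp_p(-s\eta_p)$ already encodes this coherence. You have identified the right tool (the Gauss lemma applied to $b_x$), so the argument goes through.
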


We remark that in contrast to the case in normal coordinates, the norm in semigeodesic coordinates no longer measures a well-defined distance. The first $m-1$ coordinates parameterize geodesic distance in the parallel hypersurface, while the last coordinate entry parameterizes geodesic distance in $\M$ in the direction orthogonal to the hypersurface.  Moreover, whereas the geodesic ball is a sphere, a semigeodesic chart may be viewed as a hypercylinder which intersects the boundary orthogonally.  The cylinder is symmetric in the coordinates $u^1,...,u^{m-1}$ with respect to any $(m-1)$-dimensional rotation but the symmetry does not extend the $u^m$ which is the `height' of the cylinder.  Moreover, since $u^m$ parametrizes the geodesic toward the boundary, the cylinder is truncated to $u^m \geq b_x = -d(x,\partial \M)$.  Finally, we note that following \cite{LeeRM}, $u^m$ is oriented along an inward-facing pointing normal, so for any vector $v$ we have $v^m = -v\cdot \eta_x$ since $\eta_x$ is outward-pointing.




 
\subsection{Coordinate Estimates}
Although the Euclidean norm in semigeodesic coordinates no longer corresponds to geodesic distance from $x$, the goal of this section is to relate the norm in such coordinates to the geodesic distance in $\M$.

We first note that compactness of $\M$ implies an upper and lower bound on the sectional curvature of the interior of $\M$ as well as bounds on the inward-facing sectional curvature of $\partial \M$. From the Gauss equation, it also follows that each parallel hypersurface $\partial \M_t$ for $t\in [0,\frac{1}{2}r_C]$ has inward-facing sectional curvature bounded above and below. We let $K$ be an upper bound on the sectional curvature and inward facing sectional curvatures of such parallel hypersurfaces such that $-K$ is also a lower bound on such curvatures.  By the results of \cite{ehrlich1974continuity}, one also obtains a single lower bound $r_\partial>0$ on the injectivity radius of each parallel hypersurface $\partial \M_t$ for all $t\in [0,\frac{1}{2}r_C]$.

The following lemma may be proven by an application of the Rauch comparison theorem (for normal coordinates) and Warner's generalization of the Rauch comparison theorem \cite{warner1966extension} (for semigeodesic coordinates. Such an argument was done in Proposition 2.6 of \cite{schick2001manifolds} and needs only slight modification for semigeodesic coordinates.

\begin{lemma}\label{metrictensorbounds}\cite{schick2001manifolds}
Suppose $\M$ is a compact Riemannian manifold, then there exists constants $\ti{C}_0>0, \ti{C}_1>0$ such that in any geodesic chart in $\M$,
\[
|g_{ij}|\leq \ti{C}_0 \tn{ and } |g^{ij}|\leq \ti{C}_1
\]
\end{lemma}

Using Lemma \ref{metrictensorbounds}, we can now show that the norm in geodesic coordinates approximates the extrinsic distance induced by $\iota: \M\rightarrow \RR^d$.
\begin{proposition}\label{prop1}
There exists a $C_0,C_1>0$ such that in any geodesic chart centered at $x$, and any point $y$ in that chart,
\[
C_0 \vert u \vert^2 \leq d^2_{\RR^d}(\iota(x),\iota(y)) \leq C_1 \vert u \vert^2
\]
where $u$ is the coordinate representative of $y$ in either normal or semigeodesic coordinates.
\end{proposition}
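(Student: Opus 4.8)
The plan is to reduce the claim to a pair of uniform two-sided bounds: (i) comparing the extrinsic distance $d_{\RR^d}(\iota(x),\iota(y))$ to the intrinsic distance along an appropriate path in $\M$, and (ii) comparing that intrinsic length to the coordinate norm $|u|$. For (i), since $\iota$ is an isometric embedding, any curve $\gamma$ in $\M$ has the same length as $\iota\circ\gamma$ in $\RR^d$, so $d_{\RR^d}(\iota(x),\iota(y)) \leq d_g(x,y)$ trivially. The reverse inequality, $d_g(x,y) \leq C\, d_{\RR^d}(\iota(x),\iota(y))$ for $y$ in a geodesic chart of bounded radius, follows from a standard compactness/bounded-geometry argument: the second fundamental form of $\iota$ is uniformly bounded (this is exactly the kind of curvature bound guaranteed for manifolds of bounded geometry, and holds a fortiori for compact $\M$), so on a sufficiently small scale the embedded manifold cannot ``fold back'' on itself, and geodesic and chordal distances are comparable with a constant independent of the base point. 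One can make this quantitative by a Taylor expansion of $\iota$ along a geodesic.

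For (ii) I would work directly in the coordinate chart. In both normal and semigeodesic coordinates, $g_{ij}(x) = \delta_{ij}$ at the center, and by Theorem \ref{Shick01} the metric components $g_{ij}$ and $g^{ij}$ have uniformly bounded $C^0$ norms (in fact bounded derivatives) throughout a chart of radius at most $R_I$ (resp.\ $R_{\partial\M^t}$, $R_C$). Hence there are uniform constants $0 < \lambda \leq \Lambda$ with $\lambda|\xi|^2 \leq g_{ij}(u)\xi^i\xi^j \leq \Lambda|\xi|^2$ for all $u$ in the chart and all $\xi$; here the lower bound uses the uniform bound on $g^{ij}$ together with Cramer's rule (or directly that uniformly bounded symmetric matrices that equal the identity at one point and vary slowly stay uniformly positive definite on a small enough ball — one shrinks the chart radius if necessary using the derivative bound). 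Integrating along the coordinate ray from $0$ to $u$ (for normal coordinates this is the radial geodesic; for semigeodesic coordinates one uses the broken path through the $\partial\M^t$-geodesic and the normal geodesic, or simply the straight segment in coordinates) gives $\sqrt{\lambda}\,|u| \leq \text{length} \leq \sqrt{\Lambda}\,|u|$, and since the chart is a diffeomorphism onto its image with bounded radius, the intrinsic distance $d_g(x,y)$ is pinched between comparable multiples of $|u|$.

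Combining (i) and (ii) yields $C_0|u|^2 \leq d^2_{\RR^d}(\iota(x),\iota(y)) \leq C_1|u|^2$ with $C_0, C_1$ depending only on the bounded-geometry constants of $(\M, g, \iota)$ and not on $x$. The main obstacle is ensuring \emph{uniformity} of the constants across all base points and across both chart types simultaneously: one must verify that the chart radius on which the comparison holds can be taken bounded below uniformly (this is where Theorem \ref{Shick01} and the positivity of $r_B$, $R_I$, $R_{\partial\M^t}$, $R_C$ are essential), and that near the boundary the semigeodesic coordinate rays still lie inside $\M$ and inside the chart — which is exactly what the normal collar structure provides. Since $\M$ is compact, uniformity is ultimately a routine compactness argument, but the statement is phrased for the bounded-geometry setting, so I would phrase the proof so that only the bounded-geometry axioms (not compactness per se) are invoked.
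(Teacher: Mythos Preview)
Your approach is correct but takes a genuinely different route from the paper's. You factor the comparison through the intrinsic distance $d_g(x,y)$, requiring two separate two-sided estimates: $d_{\RR^d}(\iota(x),\iota(y)) \sim d_g(x,y)$ and $d_g(x,y) \sim |u|$. The paper bypasses $d_g$ entirely. It centers $\iota(x)$ at the origin of $\RR^d$, writes $d^2_{\RR^d}(\iota(x),\iota(y)) = \delta_{\alpha\beta}\,\iota^\alpha(u)\,\iota^\beta(u)$, Taylor-expands each $\iota^\alpha$ to first order with a mean-value intermediate point $\tilde u$, and then uses the isometric-embedding identity $\delta_{\alpha\beta}\,\partial_i\iota^\alpha\,\partial_j\iota^\beta = g^{\M}_{ij}$ to obtain directly $d^2_{\RR^d}(\iota(x),\iota(y)) = g^{\M}_{ij}(\tilde u)\,u^i u^j$. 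The uniform eigenvalue bounds on $(g_{ij})$ and $(g^{ij})$ coming from Theorem~\ref{Shick01} then give the two-sided inequality in one stroke.

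The paper's argument is shorter and uses only the bounded-geometry control on the metric components in geodesic charts. In particular it never needs the reverse inequality $d_g \leq C\,d_{\RR^d}$, which in your decomposition requires a bound on the second fundamental form of the \emph{embedding} $\iota:\M\to\RR^d$ --- a quantity not controlled by the paper's bounded-geometry axioms (those bound only the intrinsic curvature tensor and the second fundamental form of $\partial\M$ inside $\M$). Since $\M$ is assumed compact here, your extra ingredient is certainly available, but the paper's route shows it is unnecessary. Conversely, your decomposition is conceptually transparent and isolates exactly which geometric quantity governs each direction of the inequality; it also avoids the slight delicacy in the paper's argument that the mean-value point $\tilde u$ a priori depends on the component index $\alpha$.
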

\begin{proof}
From Lemma \ref{prop1}, there exists positive constants $\ti{C}_0$ and $\ti{C}_1$ such that
\[
|g_{ij}|\leq \ti{C}_0 \tn{ and } |g^{ij}| \leq \ti{C}_1
\]
for any geodesic coordinate chart. Since the matrices with entrees $g_{ij}$ and $g^{ij}$ are symmetric and positive definite, this implies that there exists  positive bounds $C_1$ and $C_0$ on the largest eigenvalue of $(g_{ij})$ and $(g^{ij})$ in any geodesic coordinate chart.

We now let $x$ be a point in $\M$, and choose either normal or semigeodesic coordinate charts for $\M$ centered at $x$, depending on whether $x$ is in the normal collar. We then choose a normal coordinate chart for $\iota(x)$ in $\RR^d$, which is simply centering $\iota(x)$ at zero. In these coordinates, we have that
\[
d_{\RR^d}(\iota(x),\iota(y)) = g^{\RR^d}_{\alpha\beta}(0)\iota^\alpha(u)\iota^\beta(u).
\]
where $u$ is the coordinate representative of $y$ in these coordinates. We then perform a Taylor expansion of $\iota(u)$, recalling that in these coordinates $\iota(0)= 0$. Therefore there exists a point $\ti{u}$ in the domain such that:
\begin{align*}
g^{\RR^d}_{\alpha\beta}(0)\iota^\alpha(u)\iota^\beta(u) &= g^{\RR^d}_{\alpha\beta}(0)\l(\iota^\alpha(0) +\pderiv{\iota^\alpha}{u^i}\Big|_{\ti{s}}u^i\r) \l(\iota^\beta(0) +\pderiv{\iota^\beta}{u^j}\Big|_{\ti{u}}u^j\r)\\
&=g^{\RR^d}_{\alpha\beta}(0)\pderiv{\iota^\alpha}{u^i}\Big|_{\ti{u}}\pderiv{\iota^\beta}{u^j}\Big|_{\ti{u}}u^iu^j.
\end{align*}
Since $g^{\RR^d}_{\alpha\beta}(0)= g^{\RR^d}_{\alpha\beta}(\iota(\ti{u})) = \delta_{\alpha\beta}$, we have:
\begin{align*}
g^{\RR^d}_{\alpha\beta}(0)\iota^\alpha(u)\iota^\beta(u) &= g^{\RR^d}_{\alpha\beta}(\iota(\ti{u}))\pderiv{\iota^\alpha}{u^i}\Big|_{\ti{u}}\pderiv{\iota^\beta}{u^j}\Big|_{\ti{u}}u^iu^j\\
&= g^\M_{ij}(\ti{u}) u^i u^j.
\end{align*}
We now note that the expression $g^\M_{ij}(\ti{u}) u^i u^j$ is maximized by the maximum eigenvalue of the matrix $g^\M_{ij}(\ti{u})$ and minimized by the maximum eigenvalue of $(g^\M)^{ij}(\ti{u})$. Since we have previously show that these are bounded by $C_1$ and $C_0$ regardless of choice of geodesic chart, we have that 
\begin{align*}
C_0 |u|^2\leq g^{\RR^d}_{\alpha\beta}(0) \iota^\alpha(s)\iota^\beta(s)\leq C_1 |u|^2
\end{align*}
and therefore 
\[
C_0 \vert u \vert^2 \leq d^2_{\RR^d}(\iota(x),\iota(y)) \leq C_1 \vert u \vert^2.
\]
\end{proof}
Thus, we have established a relationship between the norm in semigeodesic coordinates an the extrinsic distance defined by the embedding $\iota: \M\rightarrow \RR^d$.

\section{Localization of asymptotic expansions for manifolds with boundary}\label{localization}

We begin by rigorously defining two regions of $\M$ which are ``close'' and ``far" from the boundary respectively. For each $\veps>0$, we let $\M_\veps$ be the set of all points $x\in M$ such that $d(x,\partial \M)>\veps$. We then define $\mc{N}_\veps$ to be the points such that $d(x,\partial M)\leq\veps$. We refer to $\M_\veps$ as the \textit{interior region} and $\mc{N}_\veps$ as the \textit{closed collar region}. Due to the generalized Gauss lemma \cite{LeeRM}, one has that if $\veps$ is less than the normal collar width $r_C$, the topological boundary $\partial \M_\veps$ of $\M_\veps$ is a hypersurface in $\M$ parallel to $\partial \M$. 

We first show that for each sufficiently small $\veps,$ the manifold $\M$ admits an atlas of charts which are each ``large enough'' to contain a metric ball of radius $\veps$. This will be proven in Proposition \ref{prop:atlaslocal}, which we list here.

{
\renewcommand{\thetheorem}{\ref{prop:atlaslocal}}
\begin{proposition}
There exists a $C_\M>0$ such that for all $0<\veps < C_\M$, the preimage of an extrinsic ball $\iota\inv(B_\veps^{\RR^d}(\iota(x)))$ centered at a point $x\in \M$ is contained in a normal coordinate chart if $x\in \M_\veps$ or in a semigeodesic coordinate chart if $x\in \N_\veps$.
\end{proposition}
\addtocounter{theorem}{-1}
}

 We let $\partial \M_t$ refer to the hypersurface of points distance $t$ from the boundary. Such hypersurfaces are parallel to $\partial \M$ in the sense that geodesics with initial velocity normal to $\partial \M$ intersect the surfaces $\partial \M_t$ orthogonally. Using this fact, each tangent space $T_x\mc{N}$ for $x\in \mc{N}_\veps$ admits a decomposition:
\[
T_x \mc{N}_\veps = T^{\top}_x \mc{N}_\veps \oplus T^{\perp}_x \mc{N}_\veps.
\]
where $T^{\top}_x \mc{N} = T_x \partial \M_{d(x,\partial \M)}$ is the tangent space of the parallel hypersurface intersecting $x$ and $T^{\perp}_x \mc{N}_\veps$ is the space spanned by the unit vector normal to the hypersurface in $\M$.

For each $x\in \M$ and each $\veps>0$, we define the \textit{semigeodesic hypercylinder} $\ti{B}_\veps (x)$ of radius and height $\veps$ as the set of all vectors $v\in T_x\mc{N}$ such that:
\begin{enumerate}
    \item $\norm{v^\top}{g} < \veps$
    \item $\norm{v^\perp}{g} < \veps$ if $v^\perp$ is inward-facing
    \item $\norm{v^\perp}{g} < d_g(x,\partial \M)$ if $v^\perp$ is outward-facing.
\end{enumerate}

By this construction, we have that if $\veps$ is less than the injectivity radius of the parallel hypersurface $\partial \M(x) = \partial \M_{d_g(x,\partial \M})$  through $x$, then $exp^{\partial \M(x)}_x(v^\top)$ is well-defined for all $v\in \ti{B}_\veps(x)$. Similarly, if $\veps < \frac{r_C}{2}$ then $\exp_q(T^{\partial \M(x)}_{pq} v^\perp)$ is well-defined for any $q$ in a normal neighborhood of $x$ in $\partial \M(x)$, where here $T^{\partial \M(x)}_{pq}v^\perp$ denotes the parallel translate of $v^\perp$ in $\partial \M(x)$ to the point $q\in \partial \M(x)$ and $r_C$ is the normal collar width. Thus we see that if $\veps$ is sufficiently small, the semigeodesic cylinder of radius and height $\veps$ may be identified with hypercylinder which is a submanifold of $\M$ through the exponential map of $\partial \M(x)$.

The next proposition shows that there is a constant $r_\tn{sem}>0$ such that if one chooses a radius smaller than some constant $C_\M > 0$, then a metric ball of radius $C_\M$ in $\M$ is small enough to fit inside any semigeodesic cylinder of radius and height $r_\tn{sem}.$ This is an essential step to show that one can uniformly localize the operator $\mc{I}_\veps$ to semigeodesic charts in the same way as normal coordinate charts.

\begin{proposition}\label{prop:semilocalization}
Let $r_C>0$ denote the normal collar width and let $r_\partial>0$ be a lower bound on the injectivity radii of all parallel hypersurfaces $\partial \M_\veps$ with $\veps< \frac{1}{2}r_C$. Let $K\in \RR$ be an upper bound on the sectional curvature of $\M$ and the inward-facing sectional curvature of $\partial \M$. Let $r_\sem = \min\{\frac{1}{2}r_C,r_\partial, \frac{\pi}{2\sqrt{K}}\}$ where $\sqrt{K}\inv$ is defined to be infinite if $K\leq 0$ Then there exists a $\ti{C}_\M>0$ such that for all $x\in \mc{N}_{r_\sem}$,
\[
B^\M_{\ti{C}_\M}(x)\subseteq \ti{B}_{r_\sem}(x).
\]
\end{proposition}
\begin{proof}
We first let $X$ denote
\[
X =\coprod_{x\in \mc{N}_{r_\sem}} \partial \ti{B}_{r_{\sem}}(x).
\]
In other words, consider the disjoint union of the boundary of all semigeodesic hypercylinders centered about all points in $x\in \mc{N}_{r_\sem}$. It can be shown that such a set is a fiber bundle over $\mc{N}_{r_\sem}$ with model fiber diffeomorphic to a hypercylinder with the interior of the ''bottom'' face removed. Such a model fiber is compact and since $\mc{N}_{r_\sem}$ is also compact, it follows that the fiber bundle $X$ is compact. Note that proof of existence of this fiber bundle follows in analogy to the construction of the unit tangent bundle on a Riemannian manifold without boundary.

We now consider the function $d_\partial:X\rightarrow \RR$ which assigns to each $(x,v)$ the distance from $x$ to the geometric realization of $v$ in $\M$. Such a map is clearly continuous on $x$, and thus obtains a minimum value $C_\M$ on $X$ by compactness. It can be easily argued using properties of the exponential map that for each $x\in \mc{N}_{r_\sem},$ we have that $x\notin \partial B_{r_\sem}(x)$ and thus $d(x,v)> 0$ for all $(x,v)\in X$. Hence $\ti{C}_\M>0.$

Since we assume $r_{\sem}<\frac{\pi}{4\sqrt{K}}$, we  have that $\norm{v}{g}<\frac{\pi}{4\sqrt{K}}$ for each $(x,v) \in X$. It follows that the distance from $x$ to the geometric realization of $v$ is less than $\frac{\pi}{2\sqrt{K}}$. From Corollary 2 of \cite{alexander1993geometric}, it follows that every pair of points $(x,y)$ in a metric ball of radius $\frac{\pi}{2\sqrt{K}}$ has a length-minimizing curve $\gamma$ connecting $x$ and $y$ in $\M$.

Now, fix $x\in \mc{N}_{r_\sem}$ and suppose that $y\in B_{\ti{C}_\M}(x)$ but $y\notin \ti{B}_{r_\sem}(x)$. Denote the length-minimizing curve connecting $x$ and $y$ by $\gamma_{xy}(t)$. A simple topological argument can be used to show that $\gamma_{xy}$ eventually intersects the boundary of $\ti{B}_{r_\sem}(p)$. Namely, there exists a $t_0$ such that $\gamma_{xy}(t_0)\in \partial \ti{B}_{r_\sem}(p)$. It follows that $d_g(p,\gamma_{xy}(t_0))\geq \ti{C}_\M$ and hence the length of $\gamma_{xy}>C_\M$. This contradicts the fact that $y \in B_{\ti{C}_\M}(x)$ and implies the result.
\end{proof}

Since in general the kernel function $k_\veps(x,y)$ is defined using extrinsic distance in $\RR^d$ instead of distance in $\M$, we need to improve the result in Proposition \ref{prop:semilocalization} to account for extrinsic distance. We also wish to improve the result to hold for both semigeodesic and normal coordinates and hold for all $\veps$ sufficiently small. Most of this can be done by simply observing that the embedding map $\iota$ has a uniformly continuous inverse. The main remaining obstacle is that the region $\M_\veps$ grows as $\veps$ approaches zero. We therefore need to show that the injectivity radius of the region $\M_\veps$ does not shrink too fast as $\veps$ approaches zero.

\begin{proposition}\label{prop:atlaslocal}
There exists a $C_\M>0$ such that for all $0<\veps < C_\M$, the preimage of an extrinsic ball $\iota\inv(B_\veps^{\RR^d}(\iota(x)))$ centered at a point $x\in \M$ is contained in a normal coordinate chart if $x\in \M_\veps$ or in a semigeodesic coordinate chart if $x\in \mc{N}_\veps$.
\end{proposition}
\begin{proof}
First we consider the value 
\[
\tn{inj}(\M_\veps):=\inf_{x\in \M_\veps}\tn{inj}(x).
\]

We first show that for sufficiently small $\veps$, the value of $\tn{inj}(\M_\veps)\geq\veps$. Consider the double $D(\M)$ of $\M$ formed by gluing identical copies of $\M$ along the boundary of $\M$. It follows that $D(\M)$ is compact and one may extend the metric on $\M$ arbitrarily to a metric on $D(\M)$. In such a case, geodesic balls of radius $\veps$ or less on $\M_\veps$ coincide with geodesic balls of radius $\veps$ or less on $D(\M)$. Since $D(\M)$ is compact, it has positive injectivity radius and thus if $\veps < \tn{inj}(D(\M)),$ then $exp_x$ is bijective on $B_\veps(x)$ in $\M_\veps.$ Hence, if $\veps < \tn{inj}(D(\M)),$ $\M_\veps$ may be covered by normal coordinate charts which contain a metric ball of radius $\veps.$ By Proposition \ref{prop:atlaslocal} if $\veps<\ti{C}_\M$, then $\mc{N}_\veps$ may be covered in semigeodesic charts each of which contain a metric ball in $\M$ of radius $\veps.$

We now let $C_\M' = \min\{\ti{C}_\M, \tn{inj}(D(\M))\}$. Since the embedding $\iota:\M\rightarrow \RR^d$ is continuous on a compact set, its inverse $\iota\inv$ is uniformly continuous on its domain. Thus, there exists a $C_\M>0$ which does not depend on $x\in \M$ for which $\iota\inv(B_{C_\M}^{\RR^d}(\iota(x))\subseteq B_{C_\M'}(x)$ for all $x\in \M$. Therefore for all $0<\veps < C_\M$ and all $x\in \M$, we have $\iota\inv(B_\veps^{\RR^d}(\iota(x))$ is contained in a normal coordinate chart if $x\in \M_\veps$ and a semigeodesic chart if $x\in \mc{N}_\veps$.
\end{proof}

We now put together Proposition \ref{prop:atlaslocal} and Proposition \ref{prop1} to show that for sufficiently small $\veps$, one may localize the kernel integral operator to a geodesic coordinate chart up to order $\veps^z$ for arbitrarily large $z\in \NN$.

\begin{lemma}[Localization to a Geodesic Neighborhood]\label{lemma1}
Let $0< \gamma < 1$. For any $\veps>0$ such that $\veps^\gamma <\min\{ \frac{r_\M}{C_1}, C_\M\}$, 
\[
\l|\int_{\M\setminus \ti{B}^\M_{\veps^\gamma}(x)} k\l(\frac{d^2_{\RR^d}(\iota(x),\iota(y))}{\veps^2}\r) f(y)q(y)\dV\r| \in \mc{O}(\veps^z)
\]
where $z$ may be chosen arbitrarily large in $\mbb{N}$.
\end{lemma}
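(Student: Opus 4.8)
\emph{Proof proposal.}  The plan is to split the domain of integration $\M\setminus\ti{B}^\M_{\veps^\gamma}(x)$ into the part that still lies inside the maximal geodesic coordinate chart centered at $x$ and the part lying outside it, bound the extrinsic distance $d_{\RR^d}(\iota(x),\iota(y))$ from below on each part, and then invoke the exponential decay of $k$ to obtain a bound on the kernel that is smaller than any fixed power of $\veps$.  Throughout, I would use that $fq\in L^1(\M)$ --- since $\M$ is compact, $q\in C^3(\M)$ is bounded, and $f\in L^2(\M,g)$, we have $\norm{fq}{L^1(\M)}<\infty$ --- and that the smallness hypothesis $\veps^\gamma<\min\{r_\M/C_1,C_\M\}$ guarantees both that $\ti{B}^\M_{\veps^\gamma}(x)$ is contained in a single geodesic chart centered at $x$ (so that the decomposition makes sense) and that Proposition \ref{prop2} is applicable.

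On the \emph{in-chart} piece, Proposition \ref{prop1} gives $C_0|u|^2\le d^2_{\RR^d}(\iota(x),\iota(y))$ for the coordinate representative $u$ of $y$; since $|u|\ge\veps^\gamma$ on this set, the argument of $k$ is at least $C_0\veps^{2\gamma-2}$.  Writing the decay assumption as $k(z)\le e^{-\alpha z}$ for some $\alpha>0$ (e.g. $\alpha=b\log a$), this yields the pointwise, $x$-uniform bound $k(d_{\RR^d}(\iota(x),\iota(y))/\veps^2)\le e^{-\alpha C_0\veps^{2\gamma-2}}$, so this piece is at most $e^{-\alpha C_0\veps^{2\gamma-2}}\norm{fq}{L^1(\M)}$.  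On the \emph{out-of-chart} piece, the contrapositive of Proposition \ref{prop2} gives $d_{\RR^d}(\iota(x),\iota(y))\ge C_\M$, hence $k$ of the argument is at most $e^{-\alpha C_\M^2/\veps^2}$ and this piece is at most $e^{-\alpha C_\M^2/\veps^2}\norm{fq}{L^1(\M)}$.  Adding the two, the whole integral is bounded by a constant multiple of $e^{-\alpha C_0\veps^{2\gamma-2}}+e^{-\alpha C_\M^2\veps^{-2}}$; since $0\le\gamma\le\tfrac12$ forces $2\gamma-2\le-1<0$, each summand has the form $e^{-c\veps^{-\beta}}$ with $c,\beta>0$, and $\veps^{-z}e^{-c\veps^{-\beta}}\to0$ as $\veps\to0$ for every $z\in\NN$, so the bound is $\mc{O}(\veps^z)$ for arbitrarily large $z$, as claimed.

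The only genuinely delicate step is the out-of-chart region: the kernel only sees the \emph{extrinsic} distance, so a priori $\iota(y)$ could be $\RR^d$-close to $\iota(x)$ while $y$ is intrinsically far from $x$ (the embedded manifold folding back on itself), which would destroy the localization.  Ruling this out is precisely the content of Proposition \ref{prop2}, and this is where the compactness / bounded-geometry hypotheses enter; once that uniform lower bound $C_\M$ is available, the rest is a routine combination of the distance comparison of Proposition \ref{prop1} with the exponential tail of $k$.  I would also emphasize that all constants produced ($C_0$, $C_\M$, $\alpha$, and $\norm{fq}{L^1(\M)}$) are independent of $x$, which is exactly what makes the localization --- and hence the uniform asymptotic expansion built on it in the remainder of this section --- hold uniformly on $\M$.
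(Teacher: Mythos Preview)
Your proposal is correct and follows essentially the same route as the paper: bound the extrinsic distance from below on $\M\setminus\ti{B}^\M_{\veps^\gamma}(x)$ via Propositions \ref{prop1} and \ref{prop2}, then invoke the exponential decay of $k$. The paper packages the two propositions into a single containment statement (the preimage of the extrinsic $\veps^\gamma$-ball lies in an intrinsic ball of comparable radius, hence the complement has extrinsic distance $\ge\veps^\gamma$) rather than your explicit in-chart/out-of-chart split, and finishes with Cauchy--Schwarz in $L^2(q\,\dV)$ instead of your direct $L^1$ bound on $fq$, but these are cosmetic differences.
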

\begin{proof}
If $\veps^\gamma < C_\M$, we have that the preimage of an $\veps^\gamma$ ball in $\RR^d$ centered about $\iota(x)$ is contained in a geodesic coordinate chart centered at $x$. If $\veps^\gamma  < \frac{C_\M'}{C_1}$,  then by Proposition \ref{prop1}, $\ti{B}^\M_{C_1\veps^\gamma}(x)$ contains this preimage, and is also contained in the geodesic chart. Hence, any point in $\M$ outside of $\ti{B}^\M_{C_1\veps^\gamma}(x)$ has extrinsic distance no less than $\veps^\gamma$ from $x$.

Using exponential decay of the kernel, this implies that
\begin{align*}
\int_{\ti{B}^\M_{C_1\veps^\gamma}(x)}k\l(\frac{d_{\RR^d}(\iota(x),\iota(y))}{\veps^2}\r)q\dV &\leq \int_{\ti{B}^\M_{C_1\veps^\gamma}(x)}\alpha e^{-\beta\frac{d^2_{\RR^d}(\iota(x),\iota(y))}{\veps^2}}q \dV\\
&\leq \alpha e^{-\beta\frac{\veps^{2\gamma}}{\veps^2}}\\
&=\alpha e^{-\beta e^{2(\gamma-1)}}
\end{align*}
 We then apply Cauchy-Schwarz inequality in $q$-weighted $L^2(\M)$:
\begin{align*}
\l\langle k\l(\frac{d_{\RR^d}(\iota(x),\iota(y))}{\veps^2}\r) , f\r\rangle^2  &\leq \l\langle k\l(\frac{d_{\RR^d}(\iota(x),\iota(y))}{\veps^2}\r) , k\l(\frac{d_{\RR^d}(\iota(x),\iota(y))}{\veps^2}\r)\r\rangle^2\langle f, f\rangle\\
&\leq\langle f, f\rangle \alpha e^{-2\beta \veps^{2(\gamma-1)}}\int_{\M\setminus \ti{B}^\M_{C_1\veps^\gamma}(x)} q \dV\\
&\leq\langle f, f\rangle \alpha e^{-2\beta \veps^{2(\gamma-1)}}
\end{align*}

We see that the term $\langle f, f\rangle \alpha e^{-2\beta \veps^{2(\gamma-1)}}$ is asymptotically bounded by any polynomial $\veps^z$ with $z\geq 1$.
\end{proof}

We have now shown that the value of $\mc{I}_\veps f(x)$ depends only on the behavior of $f$ inside a single chart for small enough values of the parameter $\veps.$ If $x$ is in the closed collar region $\mc{N}_\veps$ for small enough $\veps$, this chart must be taken as a semigeodesic coordinate chart, while if $x$ contained in the interior region $\M_\veps$ one may use a normal coordinate chart. 

\section{Uniform Asymptotic Expansion for Manifolds with Boundary}\label{expansion}
The results of the previous section show that the asymptotic analysis of $\mc{I}_\veps f(x)$ can be subdivided into two cases depending on whether $x$ is in the interior region $\M_\veps$ or closed collar region $\mc{N}_\veps$. In this section, we derive new asymptotic expansions of $\mc{I}_\veps f$ in semigeodesic coordinates. When taken together with existing expansions in normal coordinates from \cite{diffusion, hein2007convergence}, this yields an asymptotic expansion of $\mc{I}_\veps$ that is uniform in $\veps$, meaning that for sufficiently small $\veps,$ the expansion holds for each $x\in M$. This uniformity is necessary for our later proof of convergence.

We begin by deriving asymptotic expansions of $\mc{I}_\veps f$ in semigeodesic coordinates. The following lemmas are used to show that the value of the constant in the leading order error term is related to the mean curvature of the boundary of $\M$. They are largely technical, but this specific value will give us some cancellation in the final expansion and is thus important.

We begin by observing coordinate expressions for the Levi-Civita connection on $\M$.

\begin{lemma}\label{lemma:computation1}
Let $U$ be the vector field such that
\begin{enumerate}
\item $U_x \in T_x\M$ maps to the point $u$ in semigeodesic coordinates centered at $x$.
\item The coordinate representation $U= u^i\partial_i$ has constant component functions $u^i$.
\end{enumerate}
Then at the point p:
\[
2\gdot{\nabla_U U ,U} = \delta_{\alpha\beta} \l(\pderiv{^2\iota^\alpha}{u^a\partial u^c}\pderiv{\iota^\beta}{u^b} + \pderiv{\iota^\beta}{u^a}\pderiv{^2\iota^\alpha}{u^b\partial u^c}\r).
\]
\end{lemma}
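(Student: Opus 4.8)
The plan is to evaluate both sides directly in the semigeodesic chart centered at $p$. Here the free indices $a,b,c$ on the right-hand side are summed against the constant components $u^a,u^b,u^c$ of $U$, so that both sides are scalars evaluated at $p$ (equivalently at the coordinate origin $u=0$). I will first reduce the left-hand side to a contraction of Christoffel symbols of the first kind against $u^iu^ju^l$, and then obtain the right-hand side by differentiating the isometric-embedding identity $g_{ij}=\delta_{\alpha\beta}\pderiv{\iota^\alpha}{u^i}\pderiv{\iota^\beta}{u^j}$ and collecting terms.

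\emph{Step 1.} Since $U=u^i\partial_i$ has \emph{constant} component functions, the coordinate expression for the Levi--Civita connection gives $\nabla_UU=\big(u^i\partial_i(u^k)+\Gamma^k_{ij}u^iu^j\big)\partial_k=\Gamma^k_{ij}u^iu^j\partial_k$ throughout the chart. Pairing with $U$ and lowering with $g$, $\gdot{\nabla_UU,U}=g_{kl}\Gamma^k_{ij}u^iu^ju^l=\Gamma_{ij,l}\,u^iu^ju^l$, where $\Gamma_{ij,l}=\tfrac12\big(\partial_ig_{jl}+\partial_jg_{il}-\partial_lg_{ij}\big)$ is the Christoffel symbol of the first kind. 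It is worth noting that, unlike in normal coordinates, a semigeodesic chart need not make the Christoffel symbols vanish at $p$, so this quantity is genuinely nonzero in general; that is precisely why the right-hand side of the lemma is not identically zero.

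\emph{Step 2.} Differentiating the embedding identity gives $\partial_cg_{ij}=\delta_{\alpha\beta}\big(\pderiv{^2\iota^\alpha}{u^c\partial u^i}\pderiv{\iota^\beta}{u^j}+\pderiv{\iota^\alpha}{u^i}\pderiv{^2\iota^\beta}{u^c\partial u^j}\big)$. Substituting this into $\Gamma_{ij,l}u^iu^ju^l$ and using that $u^iu^ju^l$ is totally symmetric (so dummy indices may be relabeled freely, and the symmetry of $\delta_{\alpha\beta}$ exploited), I will verify that each of the three contractions $\partial_ig_{jl}\,u^iu^ju^l$, $\partial_jg_{il}\,u^iu^ju^l$, $\partial_lg_{ij}\,u^iu^ju^l$ collapses to the \emph{same} scalar $Q:=2\,\delta_{\alpha\beta}\pderiv{^2\iota^\alpha}{u^i\partial u^j}\pderiv{\iota^\beta}{u^l}u^iu^ju^l$. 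Hence $2\gdot{\nabla_UU,U}=2\Gamma_{ij,l}u^iu^ju^l=Q+Q-Q=Q$, and relabeling $(i,j,l)\mapsto(a,c,b)$ and splitting $Q$ symmetrically (again via $\alpha\leftrightarrow\beta$ together with $a\leftrightarrow b$) writes it as $\delta_{\alpha\beta}\big(\pderiv{^2\iota^\alpha}{u^a\partial u^c}\pderiv{\iota^\beta}{u^b}+\pderiv{\iota^\beta}{u^a}\pderiv{^2\iota^\alpha}{u^b\partial u^c}\big)u^au^bu^c$, which is the asserted identity.

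The whole argument is a short computation whose only delicate point is the collapse in Step 2: one must check that the $-\partial_lg_{ij}$ contribution cancels \emph{exactly one} of the two $+$ contributions (not part of each), which rests on the total symmetry of $u^iu^ju^l$ rather than on any symmetry of $\Gamma_{ij,l}$ alone. I expect that index bookkeeping to be the sole obstacle; everything else is unwinding definitions.
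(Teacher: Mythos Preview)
Your proposal is correct. The paper's proof takes a slightly shorter path: rather than expanding $\gdot{\nabla_UU,U}$ through Christoffel symbols of the first kind and then using the total symmetry of $u^iu^ju^l$ to collapse $\partial_ig_{jl}+\partial_jg_{il}-\partial_lg_{ij}$ to a single $\partial_cg_{ab}$, it invokes metric compatibility directly, writing $2\gdot{\nabla_UU,U}=U\gdot{U,U}=u^c\partial_c(g_{ab}u^au^b)=(\partial_cg_{ab})u^au^bu^c$ in one step (the $u^i$ being constant). From there both arguments finish identically by differentiating the isometric-embedding identity $g_{ab}=\delta_{\alpha\beta}\partial_a\iota^\alpha\partial_b\iota^\beta$. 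Your route is a perfectly valid re-derivation of that compatibility identity in this special case; it just costs a bit more bookkeeping.
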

\begin{proof}
Since $\iota: \M \rightarrow \RR^d$ is an isometric embedding, we may relate the components of the metric in $\M$  to those  in $\RR^d$ via:
\[
g^\M_{ab}(u) = g^{\RR^d}_{\alpha\beta}(\ti{s}) \pderiv{\iota^\alpha}{u^a}\pderiv{\iota^\beta}{u^b}= \delta_{\alpha\beta}\pderiv{\iota^\alpha}{u^a}\pderiv{\iota^\beta}{u^b}
\]
We then take the partial derivative of both sides, noting that we are in normal coordinates in $\RR^d$ and therefore all partial derivatives of the metric components vanish at $0$. This yields:
\[
\pderiv{g^\M_{ab}(0)}{u^c} =\pderiv{g^{\RR^d}_{\alpha\beta}(0)}{u^c} \pderiv{\iota^\alpha}{u^\rho}\pderiv{\iota^\rho}{u^c}\pderiv{\iota^\alpha}{u^a}\pderiv{\iota^\beta}{u^b}  + g^{\RR^d}_{\alpha\beta}(0) \pderiv{ }{u^c}\l(\pderiv{\iota^\alpha}{u^a}\pderiv{\iota^\alpha}{u^b}\r) =\delta_{\alpha\beta} \l(\pderiv{^2\iota^\alpha}{u^a\partial u^c}\pderiv{\iota^\beta}{u^b} + \pderiv{\iota^\beta}{u^a}\pderiv{^2\iota^\alpha}{u^b\partial u^c}\r).
\]
Given any $u \in T_x\M$, we can extend $u$ to the vector field $U= u^i\partial_i$ on the coordinate chart where $u^i$ are constant functions and $\partial_i$ are the coordinate vector fields. Using that the Levi-Civita connection is compatible with the metric, we obtain:
\[
\pderiv{g^\M_{ab}(0)}{u^c}u^au^bu^c = U\gdot{U,U} = 2\gdot{\nabla_U U, U}
\]
as desired.
\end{proof}
Next we relate the Levi-Civita connection to the second fundamental form $\Pi_{\partial \M_t}$ of the hypersurfaces $\partial \M_t$ as a submanifolds of $\M$.
\begin{lemma}\label{lemma:computation2}
With $u$ and $U$ having the same conditions as above, decompose $U$ into the vector field $U^\top = \sum_{i=1}^{m-1} u^i \partial_i$ tangential to the hypersurface $\partial \M_t$ and the normal vector field $U^\perp = u^m\partial_m = -u^m\eta_x$. Then we have:
\[
\gdot{\nabla_U U,U} = -\gdot{\Pi_{\partial \M_t}(U^\top, U^\top),U^\perp}.
\]
\end{lemma}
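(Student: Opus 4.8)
The plan is to evaluate $\gdot{\nabla_U U,U}$ at the center $p$ of the semigeodesic chart, reduce it to a single first partial derivative of the metric, and recognize that derivative as a second-fundamental-form term of $\partial\M^t$. By metric compatibility $2\gdot{\nabla_U U,U}=U\gdot{U,U}$, and since $U=u^i\partial_i$ has constant component functions this equals $u^au^bu^c\,\partial_c g_{ab}$ throughout the chart (the identity appearing in the proof of the previous lemma), so everything reduces to deciding which partials $\partial_c g_{ab}(0)$ survive. Two structural facts about semigeodesic coordinates handle the bookkeeping: first, the $u^m$-curves are unit-speed geodesics hitting each level hypersurface $\partial\M^t$ orthogonally, so the metric block-diagonalizes, $g_{mm}\equiv 1$ and $g_{im}\equiv 0$ for $i<m$ throughout the chart, whence all of their partials vanish identically; second, the first $m-1$ coordinates are normal coordinates for $\partial\M^t$ centered at $p$, so $\partial_k g_{ij}(0)=0$ whenever $i,j,k\le m-1$. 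Feeding these into $u^au^bu^c\,\partial_c g_{ab}(0)$ kills every term except those with $a=i\le m-1$, $b=j\le m-1$, $c=m$, leaving
\[
2\gdot{\nabla_U U,U}(p)=u^iu^ju^m\,\partial_m g_{ij}(0)\qquad(i,j\le m-1).
\]

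To identify the coefficient, observe that the block form of the metric forces $g^{ml}\equiv\delta^{ml}$, so the Christoffel formula collapses to $\Gamma^m_{ij}=-\tfrac12\partial_m g_{ij}$ for $i,j\le m-1$ (the cross terms drop out since $g_{im}\equiv g_{jm}\equiv 0$). On the other hand, at $p$ the field $\partial_m$ is the unit inward normal $-\eta_x$ of $\partial\M^t$ while $\partial_i,\partial_j$ are tangent to $\partial\M^t$, so the Gauss formula gives $\gdot{\Pi_{\partial\M^t}(\partial_i,\partial_j),\partial_m}(p)=\gdot{\nabla_{\partial_i}\partial_j,\partial_m}(p)=\Gamma^m_{ij}(0)$ (using $g_{mm}\equiv 1$). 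Therefore $\tfrac12\partial_m g_{ij}(0)=-\gdot{\Pi_{\partial\M^t}(\partial_i,\partial_j),\partial_m}(p)$, and substituting, with $U^\top=\sum_{i\le m-1}u^i\partial_i$, $U^\perp=u^m\partial_m$, and the bilinearity and symmetry of $\Pi_{\partial\M^t}$,
\[
\gdot{\nabla_U U,U}(p)=-\,u^iu^ju^m\,\gdot{\Pi_{\partial\M^t}(\partial_i,\partial_j),\partial_m}(p)=-\gdot{\Pi_{\partial\M^t}(U^\top,U^\top),U^\perp},
\]
which is the claim. As an invariant cross-check: the coordinate fields commute and $u^m$ is constant, so $[U^\perp,U^\top]=0$ and $\nabla_{U^\perp}U^\perp=(u^m)^2\nabla_{\partial_m}\partial_m=0$, giving $\nabla_U U=\nabla_{U^\top}U^\top+2\nabla_{U^\top}U^\perp$; pairing with $U$ and using metric compatibility with $\gdot{U^\top,U^\perp}\equiv 0$ and $|U^\perp|^2\equiv(u^m)^2$ collapses this to $\gdot{\nabla_U U,U}=\tfrac12 U^\top(|U^\top|^2)-\gdot{(\nabla_{U^\top}U^\top)^\perp,U^\perp}$, whose first term vanishes at $p$ by the normal-coordinate property and whose second equals $\gdot{\Pi_{\partial\M^t}(U^\top,U^\top),U^\perp}$ by the Gauss formula.

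The main obstacle is bookkeeping and sign convention rather than anything deep. One must notice that, in contrast with every other index triple, the mixed partial $\partial_m g_{ij}(0)$ with $i,j<m$ does \emph{not} vanish --- it is exactly the datum encoding $\Pi_{\partial\M^t}$ --- and one must keep the orientation straight ($\partial_m=-\eta_x$ is inward, $U^\perp=u^m\partial_m=-u^m\eta_x$) so that no spurious sign appears. It is also worth flagging that the identity is asserted at the chart center $p$ (equivalently, along the radial ray $t\mapsto tu$ issuing from $p$): off that ray the dropped term $\tfrac12 U^\top(|U^\top|^2)$ is generally nonzero, which is harmless because only the value at $p$ enters the Taylor expansion of the extrinsic distance used in the asymptotic expansion that follows.
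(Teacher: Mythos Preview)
Your proof is correct, and your primary argument takes a genuinely different route from the paper's. The paper decomposes $U=U^\top+U^\perp$, expands $\gdot{\nabla_U U,U}$ by multilinearity into eight terms, kills four via vanishing Christoffel symbols in semigeodesic coordinates, and evaluates the four survivors using the Gauss and Weingarten equations (obtaining $+\gdot{\Pi,U^\perp}-2\gdot{\Pi,U^\perp}$). You instead work directly with the identity $2\gdot{\nabla_U U,U}=u^au^bu^c\,\partial_c g_{ab}(0)$ from the previous lemma and systematically determine which metric partials survive using the block structure $g_{im}\equiv 0$, $g_{mm}\equiv 1$ together with the normal-coordinate property on $\partial\M^t$; the single surviving block $\partial_m g_{ij}(0)$ you then identify as $-2\Gamma^m_{ij}(0)=-2\gdot{\Pi(\partial_i,\partial_j),\partial_m}$. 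Your approach is more elementary and keeps the coordinate bookkeeping entirely explicit, while the paper's is more invariant; your ``cross-check'' paragraph is in fact essentially the paper's argument, and is arguably more careful than the paper in one respect: you explicitly isolate the term $\tfrac12 U^\top(|U^\top|^2)$ and note it vanishes at $p$, whereas the paper writes ``$\nabla_{U^\top}U^\top=\Pi(U^\top,U^\top)$'', which silently drops the tangential part (harmless at $p$, but imprecise as stated). Your remark that the identity is only claimed at the chart center is apt and worth keeping.
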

\begin{proof}
We decompose $\gdot{\nabla_U,U}$ into
\[
\gdot{\nabla_UU,U} = \gdot{\nabla_{(U^\top +U^\perp)}(U^\top +U^\perp),(U^\top +U^\perp)}.
\]
Since the connection is linear in both components over $\RR$, and the component functions of $U$ are constant, we may simply bilinearly expand the above term. We also note that 
\[\nabla_{u^j\partial_j} u^i\partial_i = u^iu^j \nabla_{\partial_j}\partial_i= u^iu^j\Gamma_{ij}^k\partial_k.
\] 
Since many of the Christoffel symbols in semigeodesic coordinates are zero, we are left with:
\begin{align*}
\gdot{\nabla_UU,U} = \gdot{\nabla_{U^\top} U^\top, U^\top} +\gdot{\nabla_{U^\top} U^\top, U^\perp} + \gdot{\nabla_{U^\top} U^\perp, U^\top} + \gdot{\nabla_{U^\perp} U^\top, U^\top}
\end{align*}

Using the Gauss equation for the hypersurface embedded in $\M$, we get that $\nabla_{U^\top} U^\top = \Pi(U^\top,U^\top)$. This implies that the first term is zero and the second term is $\gdot{\Pi_{\partial \M_t}(U^\top, U^\top),U^\perp}$.

For the next two terms, we first note that since $\nabla$ is a symmetric connection,
\[
\nabla_{u^j\partial_j} u^i\partial_i = u^iu^j\Gamma_{ij}^k\partial_k=u^iu^j\Gamma_{ji}^k\partial_k=\nabla_{u^i\partial_i} u^j\partial_j
\]
and so $\nabla$ is a symmetric tensor over $\RR$. Thus, both of the remaining terms are equal. The Weingarten equation implies that:
\[
\gdot{\nabla_{U^\top} U^\perp, U^\top}  =-\gdot{ \Pi(U^\top,U^\top),U^\perp}.
\]
Putting this all together, we are left with:
\[
\gdot{\nabla_UU,U} = \gdot{\Pi_{\partial \M_t}(U^\top, U^\top),U^\perp} -2\gdot{ \Pi(U^\top,U^\top),U^\perp}= -\gdot{ \Pi(U^\top,U^\top),U^\perp}
\]
\end{proof}
Putting the above three lemmas together, we now asymptotically compare the extrinsic distance in $\RR^d$ of two nearby points $\iota(x),\iota(y)$ to the norm of the coordinate expression of $y$ in semigeodesic coordinates centered at $x$. This result is analogous to Proposition 6 of \cite{smolyanov2007chernoff} which makes a similar comparison, except in their case the comparison was between extrinisic and intrinsic distance instead of semigeodesic norm.

\begin{lemma}\label{lemma2}
Let $x\in \M$ and let $y\in \M$ be such that $\norm{\iota(x)-\iota(y)}{\RR^d}< C_\M$. Let $u$ denote the coordinate representative of $y$ in semigeodesic coordinates and let $\norm{u}{}$ denote the norm of $u$ in semigeodesic coordinates. Then
\[\lim_{|u|^3\rightarrow 0}\frac{\Vert \iota(x)-\iota(y) \Vert^2_{\RR^d} - \Vert u\Vert^2}{\Vert u\Vert^3} =  -\gdot{\Pi_{\partial \M_t}(U^\top, U^\top),U^\perp}\]
\end{lemma}
\begin{proof} 
Since $\norm{\iota(x)-\iota(y)}{\RR^d}< C_\M$, it follows from \ref{prop:atlaslocal} that $y$ has a coordinate representative $u$ in semigeodesic coordinates centered at $x$. We thus choose such a coordinate system and apply Taylor's theorem and apply Lemmas \ref{lemma:computation1} and \ref{lemma:computation2}.
\begin{align*}
\Vert \iota(u) \Vert^2_{\RR^d} &= g_{\alpha\beta}(0) \pderiv{\iota^\alpha}{u^a}\pderiv{\iota^\beta}{u^b}u^au^b + \frac{1}{2}g_{\alpha\beta}(0)\pderiv{^2\iota^\alpha}{u^au^c}\pderiv{\iota^\beta}{u^b} u^au^bu^c + \frac{1}{2} g_{\alpha\beta}(0)\pderiv{\iota^\beta}{u^b}\pderiv{^2\iota^\alpha}{u^bu^c} u^au^bu^c + \mc{O}(|u|^4)\\
&=\Vert u \Vert^2_{\M} +\frac{1}{2} \pderiv{g^\M_{ab}(0)}{u^c}u^au^bu^c + \mc{O}(\Vert u\Vert^4)\\
&= \Vert u \Vert^2_{\M} +\gdot{\nabla_U U,U} + \mc{O}(\Vert u\Vert^4)\\
&=\Vert u \Vert^2_{\M}-\gdot{\Pi_{\partial \M_t}(U^\top, U^\top),U^\perp}+\mc{O}(\Vert u\Vert^4)
\end{align*}
where the final equations follow from the previous two lemmas.
\end{proof}

We note that the analogous expansion done in Proposition 6 of \cite{smolyanov2007chernoff} has order $d_\M(x,y)^4$ instead of $\norm{u}{}^3$ and depends on the second fundamental form of the embedding of $\mathcal{M}$ into the ambient space.  Thus, one of the tradeoffs for using semigeodesic coordinate charts is a lower order error term.

Next, we expand the volume form $\dV$ in semigeodesic coordinates.  In contrast, we first note that in normal coordinates the volume form has the expansion
\[
\dV = 1 + R_{ij} s^is^j +\mc{O}(|s|^3)
\]
where $R_{ij}$ are the components of the Ricci curvature tensor \cite{LeeRM}. 
In semigeodesic coordinates, an analogous result follows from the first variation formula for the area of hypersurfaces.
\begin{theorem}[First Variation of Area, \cite{chow2006hamilton,gray2012tubes}]\label{volume}
Let $\partial \M_t$ be a hypersurface in $\M$ with $x\in \partial \M$ and outward facing normal $\eta_x$. Let $\sigma(t):(-\epsilon , \epsilon)\rightarrow \M$ be a geodesic with initial velocity $\dot{\sigma}(0) = -\eta_x$  Then for $y=\sigma(t)$ we have
\[ \dV(y) = 1 - (m-1)H(x)t +\mc{O}(t^2) \]
where $H$ is the mean curvature of $\partial \M_t$.  
\end{theorem}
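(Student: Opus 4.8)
\medskip
\noindent\textbf{Proof proposal.} The plan is to reduce the statement to the classical Riccati (Raychaudhuri) equation for the shape operators of the equidistant hypersurfaces $\partial\M^t$ and then Taylor-expand in $t$. Fix $x\in\partial\M$ and work in semigeodesic coordinates $(u^1,\dots,u^{m-1},u^m)$ attached to $\partial\M$ as in Section~\ref{prelims}, so that $u^m=t$ is arclength along the inward normal geodesic $\gamma(t)=\exp_x(-t\eta_x)$ and the metric splits as $g = du^m\otimes du^m + h_{ab}(u',t)\,du^a\otimes du^b$, where $h_t$ is the induced metric on $\partial\M^t$. Choosing the tangential coordinates $u'$ to be normal coordinates on $\partial\M$ centered at $x$, we have $h_{ab}(0,0)=\delta_{ab}$, so along the geodesic $\dV(\gamma(t)) = J(t):=\sqrt{\det h_{ab}(0,t)}$ with $J(0)=1$; this is the constant term.

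Next I would compute the first variation. Writing $\partial_t$ for the (unit, geodesic) coordinate field $\partial/\partial u^m$ and $S_t(Y)=\nabla_Y\partial_t$ for the Weingarten map of $\partial\M^t$, the identity $[\partial_t,\partial_a]=0$ together with self-adjointness of $S_t$ gives $\tfrac12\partial_t h_{ab}=\gdot{S_t\partial_a,\partial_b}$, hence, by Jacobi's formula, $\dfrac{J'(t)}{J(t)} = \tfrac12 h^{ab}\partial_t h_{ab} = \textnormal{tr}\,S_t$. Since $\partial_t|_{t=0}=-\eta_x$, we get $J'(0)=\textnormal{tr}\,S_0 = -(m-1)H(x)$, where $H$ is the mean curvature of $\partial\M$ with respect to the outward normal $\eta_x$; this yields the linear term.

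For the quadratic term I would differentiate once more. The shape operators of a normal family obey the matrix Riccati equation $\dot S_t = -S_t^2 - \mc{R}_t$ with $\mc{R}_t(Y)=R(Y,\partial_t)\partial_t$, so that $\textnormal{tr}\,\dot S_t = -\textnormal{tr}(S_t^2)-\textnormal{Ric}(\partial_t,\partial_t)$. Combining with $J'=J\,\textnormal{tr}\,S_t$ gives $J''(t)=J\big((\textnormal{tr}\,S_t)^2 - \textnormal{tr}(S_t^2)-\textnormal{Ric}(\partial_t,\partial_t)\big)$, and at $t=0$, $J''(0) = (m-1)^2H(x)^2 - \textnormal{tr}(S_0^2) - \textnormal{Ric}(\eta_x,\eta_x)$. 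Thus the expansion holds with $\omega_2(x)=\tfrac12\big((m-1)^2H(x)^2 - \textnormal{tr}(S_0^2) - \textnormal{Ric}(\eta_x,\eta_x)\big)$ (more than the statement requires, but a useful sanity check). Finally, Taylor's theorem applied to $J$ on $[0,r_C)$, with the relevant derivative of $J$ controlled uniformly by the bounded-geometry bounds on the curvature via Theorem~\ref{Shick01}, produces the $\mc{O}(t^3)$ remainder uniformly in $x\in\partial\M$.

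The main obstacle here is bookkeeping rather than conceptual: keeping the three sign conventions consistent — the outward normal $\eta_x$, the inward orientation of $u^m$ so that $\partial_t|_{t=0}=-\eta_x$, and the chosen sign in $S_t$ and in the Riccati equation — so that the linear coefficient emerges as $-(m-1)H(x)$ rather than $+(m-1)H(x)$; checking against the model cases of a flat half-space ($J\equiv1$, $\omega_2\equiv0$) and the Euclidean unit ball ($J(t)=(1-t)^{m-1}$, so $\omega_2=\tfrac12(m-1)(m-2)$) pins the signs down. On the analytic side, the only subtlety is securing the $\mc{O}(t^3)$ error \emph{uniformly} in the base point, which is exactly where the uniform curvature bounds built into bounded geometry enter; this is consistent with the setup and regularity used throughout Section~\ref{prelims} and matches the classical tube computations of \cite{chow2006hamilton,gray2012tubes}.
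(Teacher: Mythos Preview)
The paper does not prove this statement; it is quoted as a known result from \cite{chow2006hamilton,gray2012tubes} and used as a black box in the proof of Theorem~\ref{thm1}. Your derivation via the Riccati/Raychaudhuri equation for the shape operators of the equidistant hypersurfaces is correct and is precisely the classical argument one finds in those references, so there is nothing to compare.
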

We note that \cite{chow2006hamilton} defines the mean curvature as simply the summation, whereas we follow \cite{LeeRM} in including the factor $\frac{1}{m-1}$.  For points $y$ that are not along the geodesic $\sigma$ the first order term will be the same since semigeodesic coordinates are the same as normal coordinates on the submanifold $\partial \mathcal{M}^t$ which contains no first order term.  Thus for $y$ not along the geodesic we have,
\[ \dV(y) = 1 - (m-1)H(x)u^m + \omega_3(x)_{ij}u^i u^j + \mathcal{O}(|u|^3) \]
for some smooth tensor $\omega_3$.  The next result will be required to simplify some expressions in the theorem which involve the derivative of the kernel.

\begin{lemma}\label{lemma3} Integrating over a cylinder $B = \{u \, | \, \sum_{i=1}^{m-1}(u^i)^2<\epsilon^2, u^m \in [-b_x/\epsilon,\epsilon] \}$ which is symmetric in coordinates $u^i$ for $1\leq i\leq m-1$ we have
\begin{align}\label{m1coeff}
\int_B k'(|u|^2)\gdot{\Pi_{\partial \M_t}(U^\top, U^\top),U^\perp} du &= -\frac{(m-1)}{2}H(x)\int_B k(|u|^2)u^m \, du \nonumber \\ &= \frac{(m-1)}{2}m_1^\partial(x)H(x) + \mathcal{O}(\epsilon^z)
\end{align}
for any $z\geq 1$, where $H(x)$ is the mean curvature.
\end{lemma}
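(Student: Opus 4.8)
The plan is to convert the integrand into an explicit monomial in semigeodesic coordinates, exploit the rotational symmetry of $B$ together with a single integration by parts, and then identify the resulting integral with the boundary moment $m_1^\partial(x)$. \emph{(Coordinate form of the integrand.)} Since the normal bundle of the hypersurface $\partial\M^t$ inside $\M$ is one-dimensional and, at $x$, spanned by $\partial_m = -\eta_x$, the second fundamental form may be written $\Pi_{\partial\M^t}(V,W) = h_{ab}V^aW^b\,\partial_m$ where $h_{ab}$ is the scalar second fundamental form of $\partial\M^t$ at $x$ and the roman indices $a,b$ run over $1,\dots,m-1$. With $U^\top=\sum_{a<m}u^a\partial_a$, $U^\perp=u^m\partial_m$ and $g_{mm}(x)=1$, this gives
\[ \gdot{\Pi_{\partial\M^t}(U^\top,U^\top),U^\perp} = h_{ab}\,u^a u^b u^m, \]
a cubic polynomial in $u$ with constant coefficients $h_{ab}$ (evaluated at $x$); the same identity also follows by combining the two earlier lemmas relating $\gdot{\nabla_U U,U}$ to $\Pi_{\partial\M^t}$, upon noting that in semigeodesic coordinates the only first derivatives of the metric that survive at $x$ are $\partial_m g_{ab}(0)=-2h_{ab}$ for $a,b<m$. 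The crucial bookkeeping fact is $\sum_{a<m}h_{aa}=(m-1)H(x)$, the trace being exactly the first variation of area from Theorem~\ref{volume}.

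\emph{(Symmetry reduction and integration by parts.)} Because $B$ is invariant under each reflection $u^a\mapsto -u^a$ with $a<m$, the off-diagonal integrals $\int_B k'(|u|^2)u^au^bu^m\,du$ with $a\neq b$ vanish, and because $B$ is invariant under permutations of $u^1,\dots,u^{m-1}$ the diagonal integrals $\int_B k'(|u|^2)(u^a)^2u^m\,du$ are all equal; hence
\[ \int_B k'(|u|^2)\gdot{\Pi_{\partial\M^t}(U^\top,U^\top),U^\perp}\,du = \frac{\sum_{a<m}h_{aa}}{m-1}\int_B k'(|u|^2)\left(\sum_{a<m}(u^a)^2\right)u^m\,du = H(x)\int_B k'(|u|^2)\left(\sum_{a<m}(u^a)^2\right)u^m\,du. \]
For each fixed $a<m$ I would then use $2u^ak'(|u|^2)=\partial_{u^a}k(|u|^2)$ and integrate by parts in $u^a$; since $\partial_{u^a}(u^au^m)=u^m$, this produces $-\tfrac12\int_B k(|u|^2)u^m\,du$ plus a boundary term supported only on the lateral face of the cylinder (the outward normal on the top and bottom faces has no $u^a$-component), on which $k$ is exponentially small in $\epsilon$ and so is $\mathcal{O}(\epsilon^z)$. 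Summing over $a$ gives $\int_B k'(|u|^2)(\sum_{a<m}(u^a)^2)u^m\,du=-\tfrac{m-1}{2}\int_B k(|u|^2)u^m\,du+\mathcal{O}(\epsilon^z)$, which together with the display above yields the first claimed equality.

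\emph{(Identification with $m_1^\partial$.)} Recalling that in semigeodesic coordinates $u^m=-u\cdot\eta_x$, the constraint $u^m\ge-b_x/\epsilon$ defining $B$ is precisely the constraint $u\cdot\eta_x<b_x/\epsilon$ appearing in \eqref{moments}; extending $B$ to the full slab $\RR^{m-1}\times(-b_x/\epsilon,\infty)$ changes the integral by only an exponentially small amount, since the kernel's exponential decay controls both the lateral truncation and the truncation of $u^m$ from above. Hence $\int_B k(|u|^2)u^m\,du=-m_1^\partial(x)+\mathcal{O}(\epsilon^z)$, and substituting into the first equality gives $\tfrac{m-1}{2}H(x)m_1^\partial(x)+\mathcal{O}(\epsilon^z)$.

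I expect the main obstacle to be the careful accounting of the several $\mathcal{O}(\epsilon^z)$ errors — the boundary term from the integration by parts and the lateral and upper truncations incurred when passing from $B$ to the infinite slab — together with consistently tracking the sign conventions linking $\Pi_{\partial\M^t}$, the inward normal $\partial_m=-\eta_x$, the outward normal $\eta_x$, and the negative sign of $m_1^\partial(x)$. Once the coordinate expression $\gdot{\Pi_{\partial\M^t}(U^\top,U^\top),U^\perp}=h_{ab}u^au^bu^m$ is established, the symmetry reduction and the integration by parts are routine.
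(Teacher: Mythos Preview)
Your proposal is correct and follows essentially the same route as the paper: expand $\gdot{\Pi_{\partial\M^t}(U^\top,U^\top),U^\perp}$ as $h_{ab}u^au^bu^m$, kill the off-diagonal terms by reflection symmetry, equate the diagonal terms by permutation symmetry, integrate by parts via $2u^ak'(|u|^2)=\partial_{u^a}k(|u|^2)$, and finally extend $B$ to the half-space to identify $m_1^\partial(x)$. If anything you are slightly more careful than the paper, which silently drops the lateral boundary term from the integration by parts and the truncation errors in passing to the full slab; your explicit $\mathcal{O}(\epsilon^z)$ bookkeeping for those steps is appropriate.
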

\begin{proof}
Linear expansion of $\gdot{\Pi_{\partial \M_t}(U^\top, U^\top),U^\perp}$ in terms of the coordinate basis at $x$ yields:
\[
\gdot{\Pi_{\partial \M_t}(U^\top, U^\top),U^\perp} = \gdot{\Pi_{\partial \M}(\partial_i,\partial_j),\partial_m}u^iu^ju^m.
\]
since the domain $B$ is symmetric in the coordinates $u^i$ for $1 \leq i \leq m-1$, all of the terms $u^i u^j$ with $i \neq j$ will integrate to zero.  Thus, we have 
\[ \int_B k'(|u|^2)\gdot{\Pi_{\partial \M_t}(U^\top, U^\top),U^\perp} du = \gdot{\Pi_{\partial \M}(\partial_i,\partial_i),\partial_m}\int_B k'(|u|^2)u^i u^i u^m du \]
and by the symmetry of the kernel, the integrals are equal for all $1 \leq i \leq m-1$, so we only need to compute 
\[ \int_B k'(|u|^2)u^1 u^1 u^m ds = \int_B \frac{1}{2}\left(\frac{\partial}{\partial u^1}k(|u|^2)\right) u^1 u^m du^1 du^2 \cdots du^m = -\frac{1}{2}\int_B k(|u|^2)u^m du \]
where the last equality follows from integration by parts with respect to $u^1$. Finally, pulling the integral out of the sum, we have,
\[ \int_B k'(|u|^2)\gdot{\Pi_{\partial \M_t}(U^\top, U^\top),U^\perp} du = -\frac{1}{2}\int_B k(|u|^2)u^m du \sum_{i=1}^{m-1} \gdot{\Pi_{\partial \M}(\partial_i,\partial_i),\partial_m} \] 
and since the mean curvature is defined as $H(x) = \frac{1}{m-1}\sum_{i=1}^{m-1} \gdot{\Pi_{\partial \M}(\partial_i,\partial_i),\partial_m}$ the first equality in \eqref{m1coeff} follows.  Finally, substituting $u^m = -u\cdot \eta_x$ and extending the integral to all of $\{u \, | \, u^m > -b_x\} = \{u \, | \, u\cdot \eta_x < b_x\}$ by Lemma \ref{lemma1} we obtain the second equality of \eqref{m1coeff}.
\end{proof}


We now compute the asymptotic expansion for points inside of $\mc{N}_\veps$. We introduce the following definition from \cite{BERRY2017} for the moments of a kernel function near the boundary,
\begin{align}\label{moments} 
m^{\partial}_\ell(x) = \int_{\{z  \in \mathbb{R}^m \, | \, z\cdot \eta_x < b_x/\epsilon\}} (z\cdot \eta_x)^\ell k(|z|^2)\, dz =  \int_{\mathbb{R}^{m-1}}\int_{-\infty}^{\dM_x/\epsilon} z_m^\ell k\left(|z|^2 \right) \, dz_m dz_1 \cdots dz_{m-1} \;  
\end{align}
where $\eta_x$ is a smooth extension of the boundary normal vector field into the normal collar of the boundary.  For more information on these moments, see \ref{background}. We can now state and prove the following theorem which includes a uniform asymptotic expansion for points near the boundary.
\begin{theorem}\label{thm1}(expansion near the boundary) Let $\mathcal{M} \subset \mathbb{R}^n$ be a compact $m$-dimensional $C^3$ Riemannian manifold with a 
$C^3$ boundary.  Let $k:\mathbb{R}\to\mathbb{R}$ have exponential decay. Suppose that for fixed $\gamma \in (0,1),$ that $\veps^\gamma <\min\{\frac{r_\M}{C_1}, C_\M\}$ Then for all $x\in \mc{N}_\veps:= \{x\in \M : d(x,\partial \M)\leq \veps\}$, we have
\begin{align}\label{fullexpansion} \epsilon^{-m}\int_{y\in\mathcal{M}} k\left(\frac{|x-y|^2}{\epsilon^2}\right)f(y)\dV &= m_0^{\partial}(x) f(x) + \epsilon m_1^{\partial}(x) \left(\eta_x \cdot \nabla f(x) + \frac{m-1}{2}H(x)f(x) \right) + \mathcal{O}(\epsilon^2)  \end{align}
where the moments $m_\ell^\partial(x)$ are defined in \eqref{moments} and $H(x)$ is the mean curvature of the  hypersurface parallel to $\partial \M$ intersecting $x$ (which depends on the second fundamental form of $\partial\mathcal{M}\subset\mathcal{M}$). 
\end{theorem} 
\begin{proof} First, by Lemma \ref{lemma1} we localize the integral to a semigeodesic $\epsilon$-ball, $B$ making an error of higher order than $\mathcal{O}(\epsilon^2)$.  Note that $B$ is exactly the domain of the integral defining the coefficients $m_{\ell}^\partial(x)$ in \eqref{moments}.  We then multiply three expansion.  First, the kernel expansion,
\begin{align*} k\left(\frac{||x-y||_{\RR^d}^2}{\epsilon^2}\right) &= k\left(\frac{||u||_\M^2 - \gdot{\Pi_{\partial \M_t}(U^\top, U^\top),U^\perp} + \omega_1(x,u) + \mathcal{O}(|u|^5)}{\epsilon^2}\right) \\ 
&= k\left(\frac{||u||_\M^2}{\epsilon^2}\right) - k'\left(\frac{||u||_\M^2}{\epsilon^2}\right)\frac{1}{\epsilon^2}\gdot{\Pi_{\partial \M_t}(U^\top, U^\top),U^\perp} + \mathcal{O}(\epsilon^{-2}|u|^4) \end{align*}
which follows from Lemma \ref{lemma2}.  Second, the Taylor expansion of $f$,
\[ f(y) = f(x) + \pderiv{f}{u^i}u_i + \mathcal{O}(|u|^2) \]
and finally, by Theorem \ref{volume} we have the following expansion of the volume form
\[ \dV(y) = 1 - (m-1)H(x) u^m +\mc{O}(|u|^2).\]
The product of these three terms appears inside the integral, so multiplying the three expansions and making the change of variables $u \mapsto \epsilon u$, we find the order-$\epsilon^0$ term is $k\left(|u|^2 \right)f(x)$ which integrates to $m_0^\partial(x)f(x)$.  The order-$\epsilon^1$ term is,
\begin{align} \epsilon \int_B &k\left(|u|^2\right)\left(\frac{\partial f}{\partial u^i}u^i  - f(x)(m-1)H(x) u^m\right) - k'\left(|u|^2\right)\gdot{\Pi_{\partial \M_t}(U^\top, U^\top),U^\perp}f(x) \, du  \nonumber \\ &= \epsilon m_1^\partial(x) \nabla f(x) \cdot \eta_x  + \epsilon m_1^\partial(x)(m-1)H(x)f(x) - \epsilon m_1^\partial(x)\frac{m-1}{2}H(x)f(x) \nonumber \\ &= \epsilon m_1^\partial(x)\left(\nabla f(x) \cdot \eta_x +\frac{m-1}{2}H(x)f(x)\right) \nonumber \end{align}
where the first equality comes from noting that $u^i$ integrates to zero by symmetry for $1\leq i \leq m-1$ and then applying Lemma \ref{lemma3}.  
\end{proof}

Having proven an asymptotic expansion for the points in the boundary region $\mc{N}_\veps,$ we now turn our attention to the interior region $\M_\veps.$ The expansion in this region was computed previously in \cite{diffusion,hein2007convergence}. Combining these previous results together with the bounds on $\veps$ proven in Lemma  \ref{lemma1} yields the following uniform expansion.
\begin{theorem}\label{thm:hein}(expansion in the interior due to \cite{diffusion,hein2007convergence}) Let $\mathcal{M} \subset \mathbb{R}^n$ be a compact $m$-dimensional $C^3$ Riemannian manifold with a $C^3$ boundary.  Let $k:\mathbb{R}\to\mathbb{R}$ have exponential decay. Suppose that for fixed $\gamma \in (0,1),$ that $\veps^\gamma <\min\{\frac{r_\M}{C_1}, C_\M\}$ Then for all $x\in \M_\veps:= \{x\in \M : d(x,\partial \M)> \veps$, we have
\begin{align}\label{interiorexpansion} \epsilon^{-m}\int_{y\in\mathcal{M}} k\left(\frac{|x-y|^2}{\epsilon^2}\right)f(y) \, \dV &= m_0 f(x) + \frac{m_2}{2} \epsilon^2 \left(S(x)f(x) + \Delta f(x) \right) + \mathcal{O}(\epsilon^3)  \end{align}
where $m_0 = \int_{\mathbb{R}^m} k(|u|)\, du$ and $m_2 = \int_{\mathbb{R}^m} u_1^2 k(|u|)\, du$ are the zeroth and second moments of the kernel and $S(x) = \frac{1}{2}(-R(x) + \frac{1}{2}||\sum_a \Pi(\partial_a,\partial_a)||^2)$ depends on the scalar curvature $R$ and the second fundamental form $\Pi$ at $x$.
\end{theorem}

The results of Theorems \ref{thm1} and \ref{thm:hein}, when taken together, provide a uniform asymptotic treatment of the operator $\mc{I}_\veps$. That is, show that for $\veps$ sufficiently small, the asymptotic behavior of $\mc{I}_\veps f(x)$ can be computed for all points $x\in \M$. This subtle but important notion of uniformity comes from Lemma \ref{lemma1}.


We remark that for small enough values of $\veps$, the both expansions \eqref{interiorexpansion} and \eqref{fullexpansion} hold in normal collar $\mc{N}_{r_C}$ but outside of $\mc{N}_\veps,$.  To reconcile these two expansions, notice that for $\dM_x \gg \epsilon$ we have $m_0^\partial = m_0$ and $m_1^\partial = 0$ up to higher order terms in $\epsilon$.  Thus, outside of the $\epsilon$ neighborhood of the boundary \eqref{fullexpansion} reduces to $m_0 f(x) + \mathcal{O}(\epsilon^2)$ which is consistent with \eqref{interiorexpansion}.

\subsection{Uniformity and Compactness}
Before presenting numerical experiments supporting the expansions of the previous section, we will briefly comment on the role of compactness of $\M$ and uniformity of the expansions in the variable $\veps.$ The authors of this paper speculate that the uniform expansions of this section should be obtainable in the noncompact case assuming some mild conditions such as sectional curvature bounds on $\M$, uniform normal collar, and positive injectivity radius lower bounds.

The expansion \ref{interiorexpansion} in Theorem \ref{thm:hein} was proven in \cite{hein2007convergence} for noncompact manifolds with boundary assuming bounds on the sectional curvature. However, the expansions hold nonuniformly, that is, for every $x\in \M \setminus \partial \M$, there exists an $\veps_x>0$ depending on $x$ such that for all $\veps< \veps_x$ the expansion holds. The only other requirement in their proof is that the embedding map $\iota$ have uniformly continuous inverse. This condition is equivalent to assuming a lower bound on minimum radius of curvature, which is the condition that appears in \cite{hein2007convergence}.

The fact that expansion (\ref{interiorexpansion}) is uniform in the variable $\veps$ is a consequence of Lemma \ref{lemma1} in this paper, which requires compactness of $\M$ in two different parts, both of which could potentially be generalized to the noncompact case. 
First, one requires a universal lower bound on the injectivity radii of $\M_\veps$ for all $\veps$ as shown in the proof of Proposition \ref{prop:atlaslocal}. A sufficient condition for this to occur in the noncompact case is to assume that the double $D(\M)$ admits a Riemannian metric which smoothly extends the metric on $\M$ and has a positive injectivity radius. This is manifestly true in the compact case.

Second, the argument in the proof of Proposition \ref{prop:semilocalization} computing a positive lower bound on the $d_\partial$ function uses compactness of the closed collar region $\mc{N}_{r_{\tn{sem}}}$. However, the authors conjecture that through using a triangle comparison argument using techniques from geometry of CAT$(\kappa)$ spaces  \cite{alexander1993geometric,alex2019alexandrov}, one may be able to derive such a positive lower bound using only bounds on the curvature of $\M$ and inward-facing sectional curvature of $\partial \M$.

\subsection{Examples}

We now provide some simple numerical examples which verify the new boundary expansion in \eqref{fullexpansion}.  We start with the interval, which is a flat manifold with a zero dimensional boundary, so the mean curvature $H(x)=0$.  We then consider a filled ellipse, so that the boundary has nontrivial curvature, but the manifold is still flat in the Riemannian sense.

\begin{example}[Interval] In Fig.~\ref{figure1} we verify \eqref{fullexpansion} using a uniform grid of $N=5000$ data points on the interval $[-1,1]$ and the function $f(x) = x^4$.  Since the grid is uniform, the density is $q(x) = 1/\textup{vol}(\mathcal{M}) = 1/2$ so in this simple example we can correct for the density by multiplying $\mathcal{K}$ by 2.  After computing $2\mathcal{K}f$ we subtract the analytical value of $m_0^\partial(x)f(x)$ and divide by $\epsilon^2 m_2^\partial(x)/2$, which will agree with $\Delta f$ in the interior of the manifold, but blows up like $\epsilon^{-1}$ near the boundary as shown by the solid black curves in Fig.~\ref{figure1}.  In order to obtain a consistent estimator we must also subtract the normal derivative term $m_1^\partial \eta_x \cdot \nabla f(x)$ as shown by the dashed blue curves.  

\begin{figure}[h]
\centering
\includegraphics[width=0.45\textwidth]{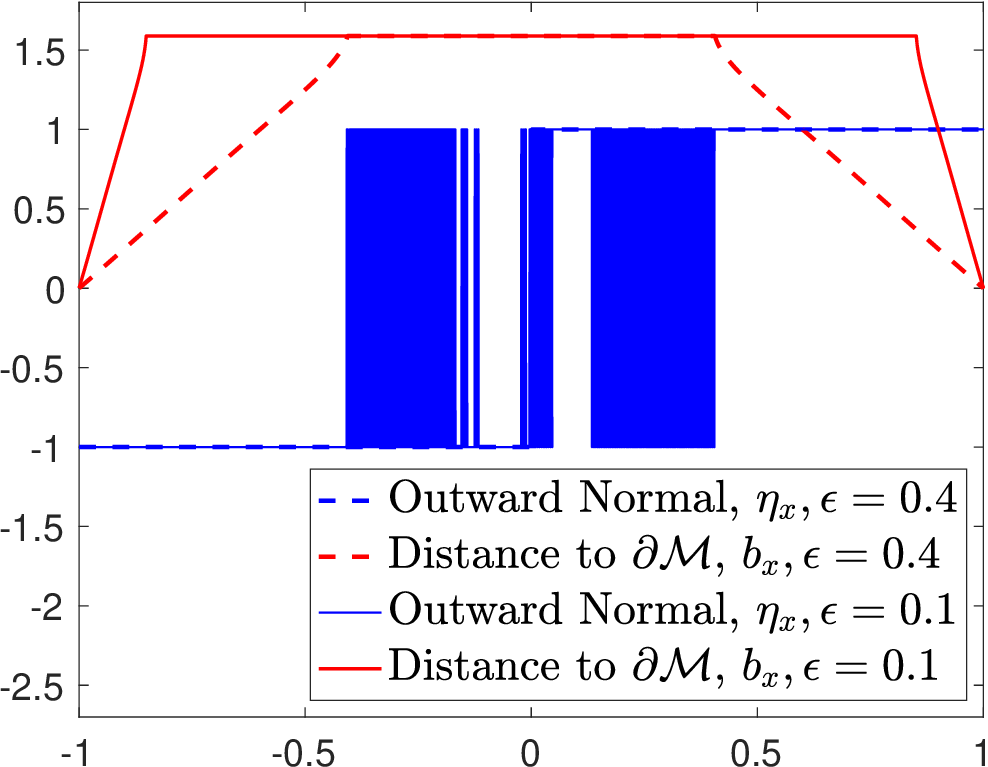}
\includegraphics[width=0.45\textwidth]{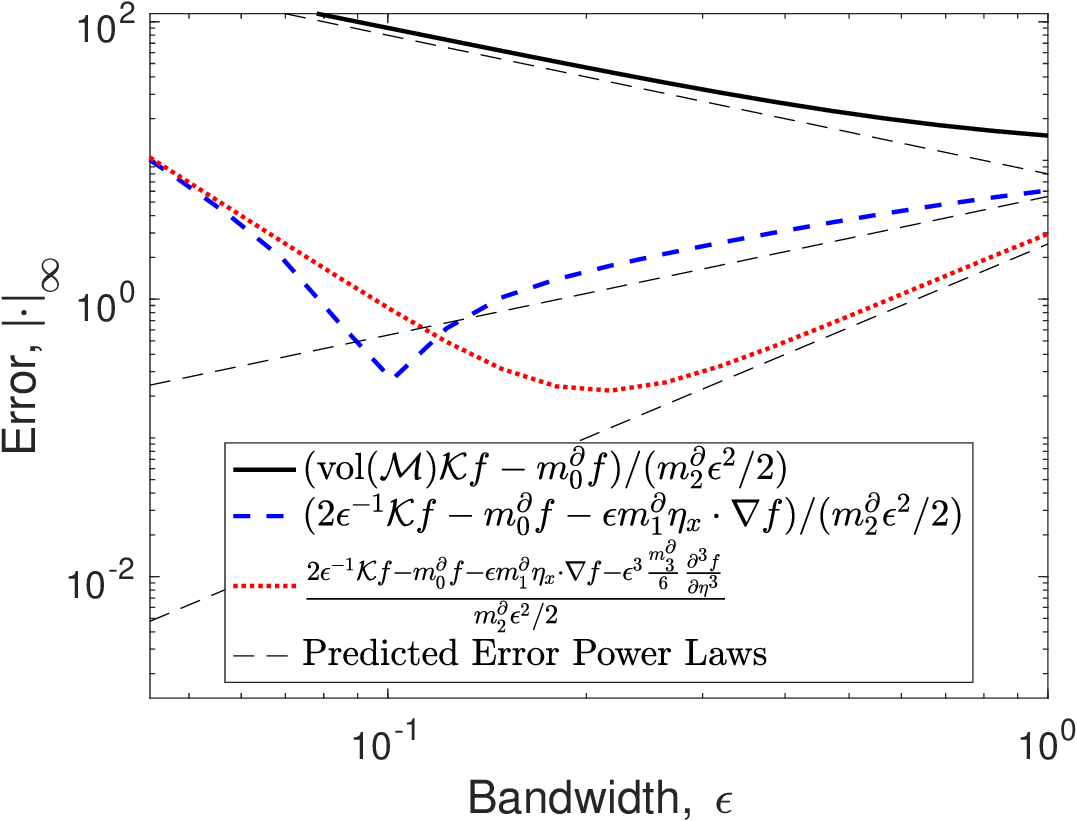} \\
\includegraphics[width=0.45\textwidth]{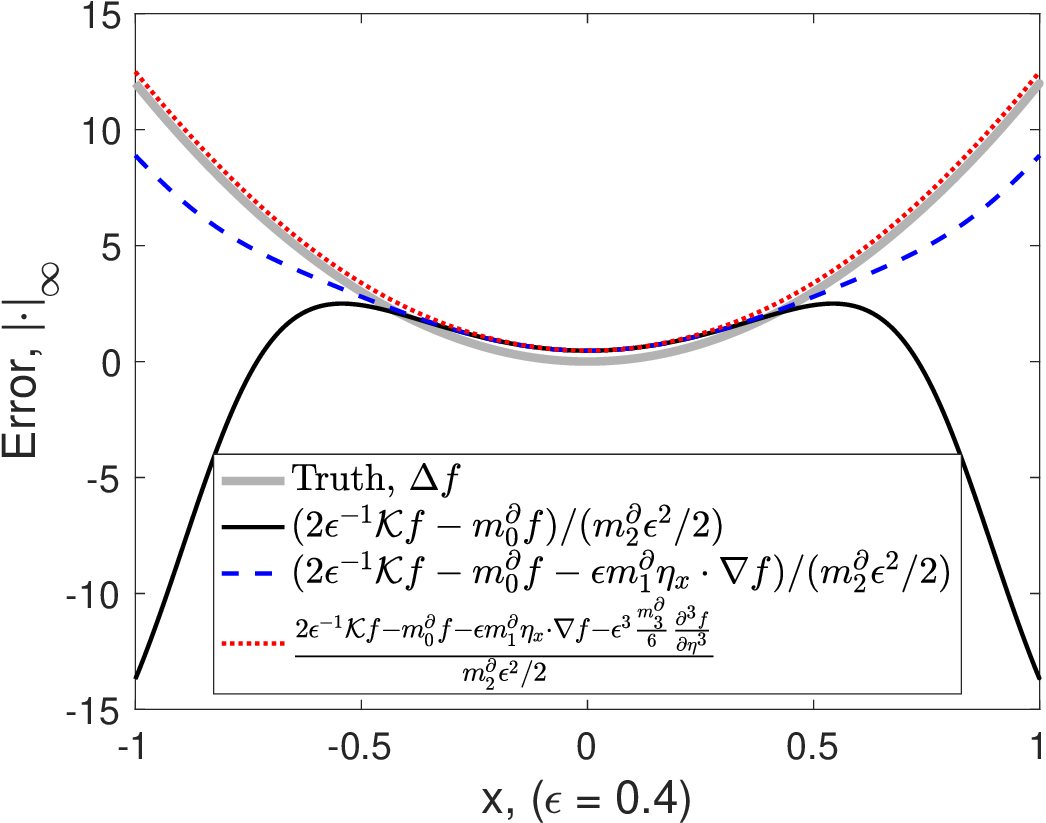}
\includegraphics[width=0.45\textwidth]{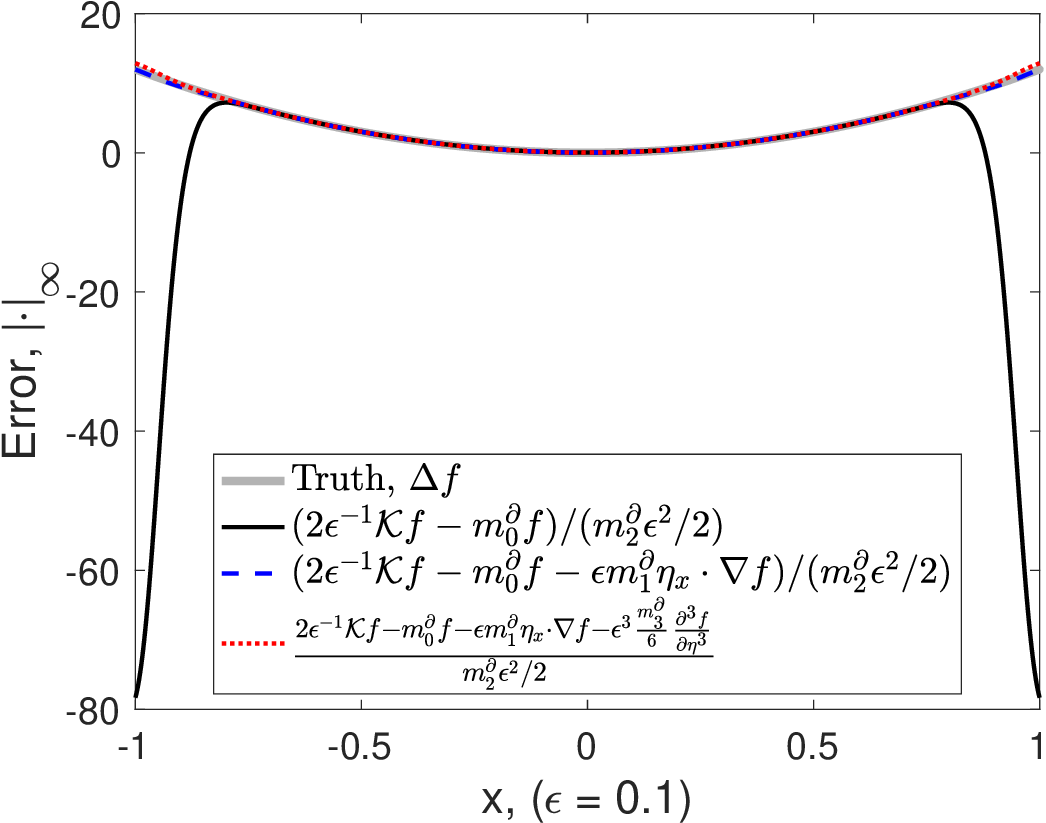}
\caption{Verifying \eqref{fullexpansion} by extracting the Laplacian on the interval $[-1,1]$ applied to the function $f(x) = x^4$. Top, left: We show the estimate of $\dM_x$ and $\eta_x$ from the previous section, the $\dM_x$ estimate saturates when $\dM_x \gg h$ and the $\eta_x$ estimate is very noisy far from the boundary.  Top, right: Error rates for various estimators of $\Delta f$ extracted from $\mathcal{K}f$, for very small $\epsilon$ the quadrature error dominates.  Bottom: True $\Delta f = 12x^2$ compared to various estimates for $\epsilon=0.4$ (left) and $\epsilon=0.1$ (right).}
\label{figure1}
\end{figure} 

\end{example} 

In the next example, in order to eliminate the variance of a single random sample, we generated a uniform grid $\{x_i\}$ on $\M$ and then a very large set of uniform sampled random data points $\{y_j\}$ and computed the kernel $k\left(\frac{|x_i-y_j|^2}{\epsilon^2}\right)$ and the function $f(y_j)$ and then estimated the expected value by 
\[ \int_\M k\left(\frac{|x_i -y|^2}{\epsilon^2}\right) f(y)\, dV(y) = \mathbb{E}\left[k\left(\frac{|x_i -y|^2}{\epsilon^2}\right) f(y)\right] = \frac{1}{N}\sum_{j=1}^N k\left(\frac{|x_i -y_j|^2}{\epsilon^2}\right) f(y_j) + \mathcal{O}(N^{-1/2}). \]
Since the average can be computed iteratively, this strategy allows us to compute the average over $N = 5 \times 10^7$ points and eliminate any variance (quadrature) error.

\begin{example}[Ellipse]  In this example we consider $\M = \{(\tilde x,\tilde y) \, | \, \tilde x^2/a^2 + \tilde y^2/b^2 \leq 1\}$ with $a=1,b=2/3$ where we use $\tilde x,\tilde y$ to denote the coordinates since $x,y \in \RR^d$ denote vectors.  We note that this example is easy to sample uniformly by simply sampling points uniformly in $[0,1]^2$ and then selecting only the points that satisfy the inequality.  
 We start by extracting the mean curvature term using the function $f\equiv 1$ so that $\nabla f \equiv 0 \equiv \Delta f$ and \eqref{fullexpansion} becomes,
\begin{equation}\label{norm1} \epsilon^{-m}\int_{y\in\mathcal{M}} k\left(\frac{|x-y|^2}{\epsilon^2}\right) \, dV = m_0^{\partial}(x) + \epsilon m_1^{\partial}(x)\frac{m-1}{2}H(x) + \frac{\epsilon^2}{2} m_2 \tilde\omega(x) + \mathcal{O}(\epsilon^3) \end{equation}
Using $\epsilon=0.1$ (results were robust for $\epsilon \in [0.5,0.15]$) we estimated the integral as described above and extracted the mean curvature term by subtracting $m_0^\partial$ and dividing by $\epsilon m_1^\partial (m-1)/2$.  In Fig.~\ref{ellipse}(left) we compare the extracted mean curvature with the following analytic derivation.  Note that the boundary of the ellipse can be parameterized as $\iota(\theta) = (a\cos\theta,b\sin\theta)$ with first derivative (and tangent vector) $V^\top = D\iota(\theta) = (-a\sin\theta,b\cos\theta)$ so that the normal vector is $V^\perp = (b\cos\theta,a\sin\theta)$ and second derivative $D^2\iota(\theta) = (-a\cos\theta,-b\sin\theta)$.  Thus, the projection of the second derivative onto the normal direction is $\iota''(\theta)\cdot \frac{V^\perp}{||V^\perp||} = \frac{-ab}{\sqrt{b^2\cos^2\theta + a^2\sin^2\theta}}$. However since we did not use a unit tangent vector we also need to divide by the norm-squared of the tangent vector which yields a mean curvature of 
\begin{align} (m-1)H(x) &= {\rm trace}\left(\left<\Pi(U^\top,U^\top),U^\perp\right>\right) = {\rm trace}\left(\left<\Pi\left(\frac{V^\top}{||V^\top||},\frac{V^\top}{||V^\top||}\right),\frac{V^\perp}{||V^\perp||}\right>\right) \nonumber \\
&= \frac{{\rm trace}\left(\left<\Pi(V^\top,V^\top),V^\perp\right>\right)}{||V^\top||^2 ||V^\perp||}  =\frac{D^2\iota \cdot \frac{V^\perp}{||V^\perp||}}{||V^\top||^2} = \frac{ab}{(b^2\cos^2\theta + a^2\sin^2\theta)^{3/2}} \end{align}
which is simply the standard (extrinsic) curvature of the parameterized curve.  This function is shown as the solid grey curve in Fig.~\ref{ellipse}(left) and compared to the empirically extracted curvature shown as red dots.  This comparison is only valid for points near the boundary, and in Fig.~\ref{ellipse}(left) we only show points with distance to the boundary less than $\epsilon/4$.

Next we verify the derivative terms in \eqref{fullexpansion} by defining a function on the ellipse by $f(\tilde x,\tilde y) = R^3$ where $R \equiv \sqrt{\tilde x^2/a^2 + \tilde y^2/b^2}$ so that $(\tilde x,\tilde y) = (aR\cos\theta,bR\sin\theta)$.  The gradient $\nabla f = (\frac{\partial f}{\partial \tilde x},\frac{\partial f}{\partial \tilde y})$ in the normal direction is
\[ \nabla f \cdot \eta_x = 3R^2(\cos(\theta)/a,\sin(\theta)/b) \cdot \frac{V^\perp}{||V^\perp||} =  3R^2 \frac{(b/a)\cos^2\theta + (a/b)\sin^2\theta}{\sqrt{b^2\cos^2\theta + a^2\sin^2\theta}}  \]
and the Laplacian is
\[ \Delta f = \frac{\partial^2 f}{\partial \tilde x^2}+\frac{\partial^2 f}{\partial \tilde y^2} = 3R((\cos^2(\theta)+1)/a^2 + (\sin^2(\theta) + 1)/b^2). \]
In order to eliminate the curvature terms, we note that multiplying \eqref{norm1} by $f(x)$ matches many of the terms from \eqref{fullexpansion}, so subtracting this from \eqref{fullexpansion} we isolate the terms
\begin{align} \left({\bf K}\vec f\right)_i - f(x_i) \left({\bf K}\vec 1 \right)_i &\to \epsilon^{-m}\int_{y\in\mathcal{M}} k\left(\frac{|x-y|^2}{\epsilon^2}\right)f(y) \, dV - f(x)\epsilon^{-m}\int_{y\in\mathcal{M}} k\left(\frac{|x-y|^2}{\epsilon^2}\right) \, dV \nonumber \\ &= \epsilon m_1^\partial(x) \nabla f(x) \cdot \eta_x + \epsilon^2 \frac{m_2^\partial(x)}{2}\Delta f(x)  \end{align}
where the convergence is as the number of data points, $N\to\infty$.
Using the averaging strategy described above to reduce variance, we estimate $\left({\bf K}\vec f\right)_i - f(x_i) \left({\bf K}\vec 1 \right)_i$ and compare the analytic expressions derived above in Fig.~\ref{ellipse}(right).  This validates the derivative terms in \eqref{fullexpansion}.

\begin{figure}[h]
\centering
\includegraphics[width=0.48\textwidth]{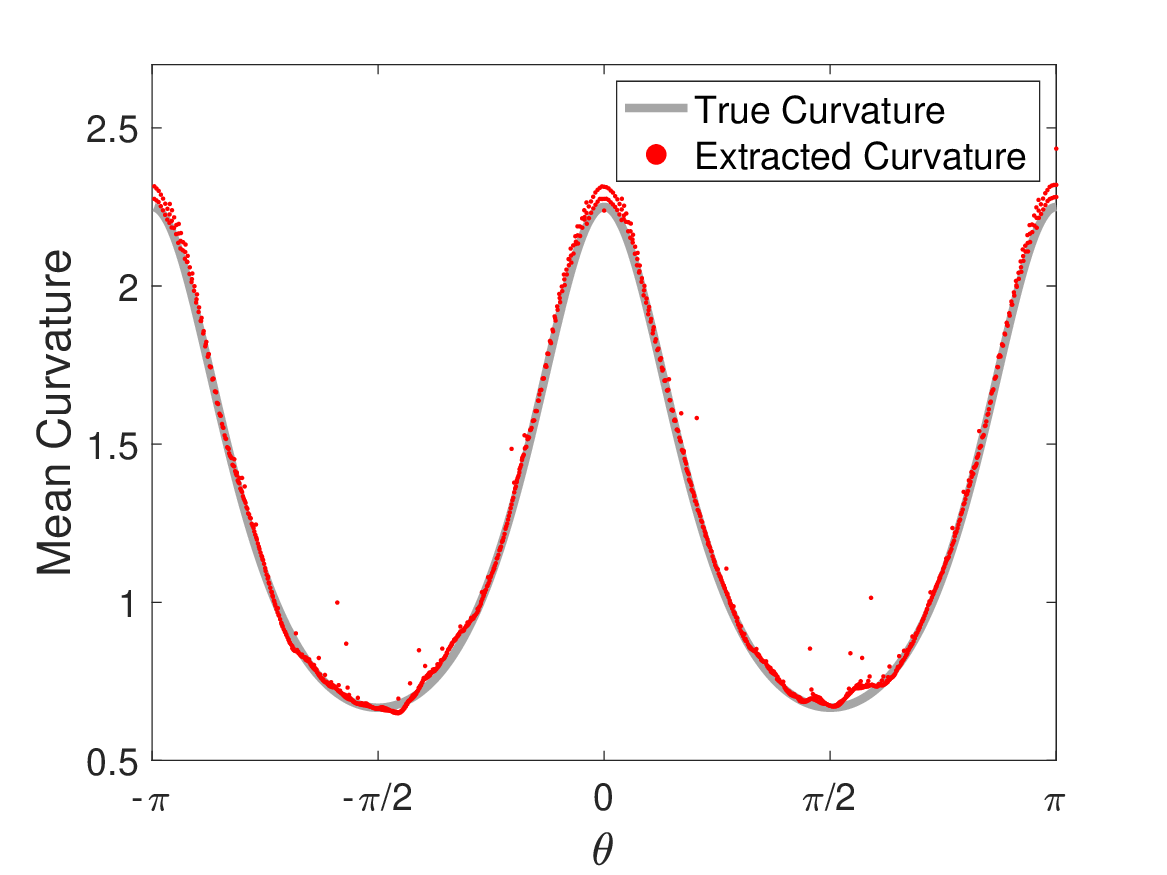}
\includegraphics[width=0.48\textwidth]{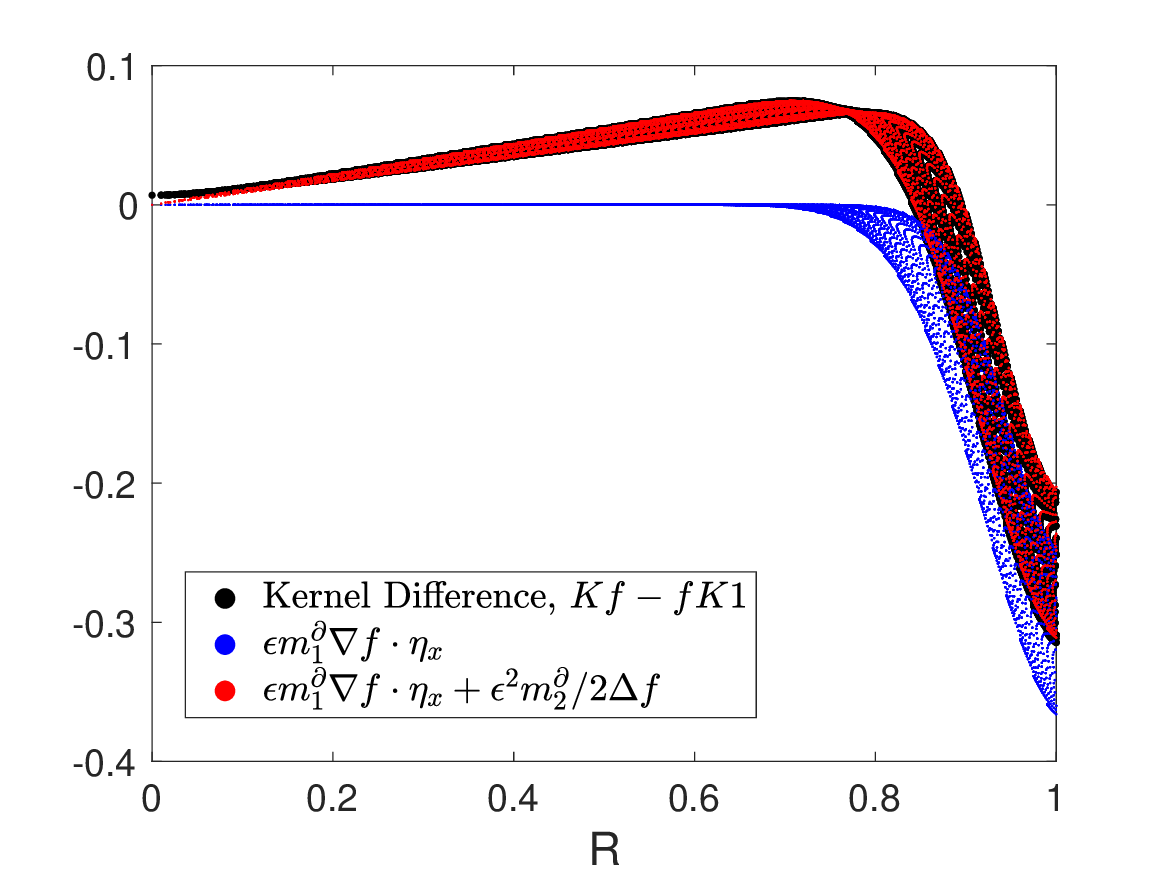} 
\caption{{\bf Left:} Verifying the mean curvature $H(x)$ in the order-$\epsilon$ term of \eqref{fullexpansion} on the ellipse.  {\bf Right:} Verifying the derivative terms in the expansion \eqref{fullexpansion}.}
\label{ellipse}
\end{figure} 

\end{example}

\section{Estimating boundary integrals}\label{boundaryint}
The purpose of this section is to use the distance to the boundary to construct a consistent estimator of the boundary integral $\mathcal{B}(\phi,f) = \int_{\partial\mathcal{M}} \phi(x)f(x)\, d\textup{vol}^\partial$
introduced in equation \eqref{ops} as one of the key bilinear forms required for solving boundary value problems.  It will turn out that the key result of this section will also be required in the next section to prove consistency of the graph Laplacian as an estimator of the Dirichlet energy, $\mathcal{E}$, in the weak sense.  

We saw in the previous subsection that the standard graph Laplacian estimate of the Laplacian on a manifold is not consistent near the boundary.  We now consider the weak form of the operators that the kernel matrix and graph Laplacian are estimating which requires a new result connecting the normal derivative term $m_1^\partial \eta_x \cdot \nabla f$ to a boundary integral.  We first define a boundary integral estimator by
\[ \mathcal{J}(f) = \frac{1}{\epsilon N} \sum_{i=1}^N k\left(\frac{\dM_{x_i}^2}{\epsilon^2}\right) f(x_i) \hspace{40pt} \mathbb{E}[\mathcal{J}(f)] = \frac{1}{\epsilon}\int_{x\in \mathcal{M}} k\left(\frac{\dM_{x}^2}{\epsilon^2}\right) f(x)q(x) \, d\textup{vol}  \]
where $K$ is a kernel with exponential decay as above, for instance $K(z) = e^{-z^2}$ is the prototypical example.  The expectation of the $\mathcal{J}$ functional is the integral over the entire manifold since we assume that the samples $x_i$ yield a weighted quadrature on the manifold.  However, the functional $\mathcal{J}$ uses the distance to the boundary $\dM_x$ to weight the data points,  so that only points near the boundary contribute significantly to the integral.  In practice, in order to compute $\mathcal{J}$, we use the method described in Section \ref{background} to estimate the distance to the boundary.  We first show that $\mathcal{J}$ is a consistent estimator of a boundary integral.

For this result, it will now be convenient to use \textit{boundary normal coordinates}, which are the special case of semigeodesic coordinates constructed on $\partial \M$. In this special case, we also will only need to parameterize the ``height'' of such charts, and so we will let $\epsilon$ parameterize only the $n$-th coordinate $u^n$ in these charts.

\begin{theorem}\label{thm2} In the same context as Theorem \ref{thm1}, let $d\textup{vol}_{\partial}$ be the natural volume element on the boundary inherited from $d\textup{vol}$, then we have
\begin{align}\label{boundaryIntExpansion} \mathbb{E}[\mathcal{J}(f)] = \overline m_0\int_{y \in \partial \mathcal{M}} f(y)q(y) d\textup{vol}_\partial + \epsilon \overline m_1 \int_{y \in \partial \mathcal{M}} f(y) q(y) H(y) - \eta_x\cdot \nabla (fq)(y) d\textup{vol}_{\partial} + \mathcal{O}(\epsilon^2) \end{align}
where $\overline m_0 = \int_0^{\infty} k(u)\, du$ and $\overline m_1 = \int_0^{\infty} u k(u)\, du$ and $H(y)$ is the mean curvature of $\partial \M$ at $y\in \partial \M$.
\end{theorem}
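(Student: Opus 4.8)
The plan is to run the same three-step recipe as in the proof of Theorem \ref{thm1} — localize by exponential decay, pass to coordinates adapted to where the weight concentrates, Taylor expand and collect powers of $\epsilon$ — but now adapted to the fact that the weight $K(b_x/\epsilon)$ concentrates on an $\epsilon$-tube around $\partial\mathcal{M}$ rather than on an $\epsilon$-ball around an interior point. The natural coordinates are therefore the normal collar coordinates $\phi:\partial\mathcal{M}\times[0,r_C)\to N\subset\mathcal{M}$, $\phi(y,t)=\exp_y(-t\eta_y)$, rather than the semigeodesic charts centered at a single point used for Theorem \ref{thm1}.

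\emph{Step 1 (localization to the collar).} Since $\mathcal{M}$ is compact and $b_x\geq\tfrac12 r_C$ for $x\notin N_{1/2}$, the exponential decay of $K$ gives, exactly as in Lemma \ref{lemma1},
\[
\frac1\epsilon\int_{\mathcal{M}\setminus N}K\!\left(\frac{b_x}{\epsilon}\right)f(x)q(x)\,d\textup{vol}=\mathcal{O}(\epsilon^z)
\]
for every $z\in\mathbb{N}$, so it suffices to integrate over $N$. Inside $N$ the collar geodesic through $y\in\partial\mathcal{M}$ realizes the distance to the boundary, so $b_{\phi(y,t)}=t$, and since $\phi$ is a diffeomorphism onto $N$ we may write $d\textup{vol}=J(y,t)\,d\textup{vol}_\partial(y)\,dt$ with $J$ the smooth positive Jacobian of $\phi$; by Theorem \ref{volume} (first and second variation of area), $J(y,t)=1-(m-1)H(y)\,t+\mathcal{O}(t^2)$ uniformly. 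Hence
\[
\frac1\epsilon\int_{N}K\!\left(\frac{b_x}{\epsilon}\right)f(x)q(x)\,d\textup{vol}=\frac1\epsilon\int_{\partial\mathcal{M}}\int_0^{r_C}K\!\left(\frac t\epsilon\right)(fq)(\phi(y,t))\,J(y,t)\,dt\,d\textup{vol}_\partial(y).
\]

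\emph{Step 2 (rescale and expand).} Substitute $t=\epsilon u$ and extend the $u$-integral to $[0,\infty)$ (another $\mathcal{O}(\epsilon^z)$ error); the inner integral becomes $\int_0^\infty K(u)\,(fq)(\phi(y,\epsilon u))\,J(y,\epsilon u)\,du$. Taylor expanding in $t=\epsilon u$ and using that $-\eta_y$ is the initial velocity of the collar geodesic, so that $\frac{\partial}{\partial t}\big|_{0}(fq)(\phi(y,t))=-\eta_y\cdot\nabla(fq)(y)$, we get $(fq)(\phi(y,\epsilon u))=(fq)(y)-\epsilon u\,\eta_y\cdot\nabla(fq)(y)+\mathcal{O}(\epsilon^2u^2)$, together with $J(y,\epsilon u)=1-(m-1)H(y)\epsilon u+\mathcal{O}(\epsilon^2u^2)$ from Step 1. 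Multiplying these two expansions, integrating against $K(u)\,du$, and then integrating over $\partial\mathcal{M}$ produces $\overline m_0\int_{\partial\mathcal{M}}fq\,d\textup{vol}_\partial$ at order $\epsilon^0$ (with $\overline m_0=\int_0^\infty K(u)\,du$), and at order $\epsilon$ the normal-derivative term $-\eta_y\cdot\nabla(fq)$ coming from the expansion of $fq$ together with the mean-curvature term coming from the first variation of area, both weighted by $\overline m_1=\int_0^\infty uK(u)\,du$; this assembles to \eqref{boundaryIntExpansion}. The $\mathcal{O}(\epsilon^2)$ remainder is uniform over $\partial\mathcal{M}$ because $\mathcal{M}$ (hence $\partial\mathcal{M}$) is compact, $C^3$, and of bounded geometry, so $H$, its covariant derivatives, the second and third derivatives of $fq$ along the collar geodesics, and all moments of $K$ are uniformly bounded — the same uniformity argument as for Theorem \ref{thm1}. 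Taking expectations, the quadrature hypothesis \eqref{quad} gives $\mathbb{E}[\mathcal{J}(f)]=\frac1\epsilon\int_{\mathcal{M}}K(b_x/\epsilon)fq\,d\textup{vol}$ exactly, so Steps 1--2 conclude. (For the estimator built from the \emph{estimated} $b_x$ of Section \ref{background} an additional $\mathcal{O}(\epsilon)$ appears from \eqref{eq:quadapp}, which does not affect the stated identity.)

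The main obstacle is not a single hard estimate but the bookkeeping in Step 2: carrying the $\mathcal{O}(\epsilon^2)$ remainder uniformly through the rescaled variable $u$, whose range $[0,r_C/\epsilon)$ grows as $\epsilon\to0$. This is handled exactly as in Theorem \ref{thm1}, using the exponential decay of $K$ to control the tail of the $u$-integral and the $C^3$/bounded-geometry bounds to control the Taylor remainders of $fq$ and of $J$. The only genuinely geometric input beyond Theorem \ref{volume} is the elementary fact that $b_x$ coincides with the collar parameter throughout $N$ and is bounded below outside it, which is immediate from compactness and the definition of the normal collar.
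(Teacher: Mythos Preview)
Your proposal is correct and is essentially the same argument as the paper's: localize to the collar by exponential decay, identify $b_x$ with the collar parameter, expand $fq$ and the volume element via the first variation of area, rescale $t=\epsilon u$, extend the $u$-integral to $[0,\infty)$, and read off the coefficients $\overline m_0,\overline m_1$. The only cosmetic difference is that you work directly with the global collar diffeomorphism $\phi:\partial\mathcal{M}\times[0,r_C)\to N$ and write $d\textup{vol}=J(y,t)\,d\textup{vol}_\partial\,dt$, whereas the paper first passes to a covering of $\partial\mathcal{M}$ by boundary normal coordinate charts, refines to a disjoint measurable partition $\{V_j\}$, expands in each $\tilde V_j=\phi(V_j\times[0,\epsilon^\gamma))$, and then sums; your global formulation is slightly cleaner and avoids that bookkeeping, but the geometric and analytic content is identical.
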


\begin{proof}
Let $0<\gamma< 1$ and $\epsilon>0$ be such that $\epsilon^\gamma$ is less than the normal collar width $R_C$. In addition, let $\mc{N}_{\epsilon^\gamma} = \{y\in \M:b_y < \epsilon^\gamma\}$ denote the normal collar of width $\epsilon^\gamma$. By using an identical argument as Lemma \ref{lemma1}, one can localize the integral over $\mc{N}_{\epsilon^\gamma}$  so that
\begin{align*}
\frac{1}{\veps}  \int_\M k\l(\frac{b_y^2}{\veps^2}\r) f(y) q(y) d\textup{vol} &= \frac{1}{\veps} \int_{\mc{N}_\veps}k\l(\frac{b_y^2}{\veps^2}\r) f(y) q(y) d\textup{vol} +\mc{O}(\veps^z)
\end{align*}
for any choice of $z\in \mbb{N}$.

Now let $\mc{U} = \{U_i\}_{i\in I}$ be a covering of $\partial \M$ in boundary normal coordinate coordinate charts. By taking intersections and complements, we can then generate a covering of $\M$ by measurable sets $\{V_j\}_{j\in J}$  of $\partial \M$ such that $V_j$ are disjoint, and each contained in a single $U_i$. We then extend each $V_j$ to a measurable subset of $\M$ by letting $\ti{V}_j = \phi(V_j\times [0,\veps^\gamma))$. Therefore each $V_j$ can be extended to a measurable set $\ti{V}_j$ of $\M$ which is contained in a boundary normal coordinate chart map $\phi_{U_i}$. The integral over the normal collar can then be parameterized as:
\begin{align}
\int_{\mc{N}_{\veps^\gamma}}k\l(\frac{b_y^2}{\veps^2}\r)f(y)q(y)\dV = \sum_{j\in J} \int_{\ti{V_j}}k\l(\frac{b_y^2}{\veps^2}\r)f(y)q(y)\dV\label{parameter}.
\end{align}

In each of these charts, the coordinate representation of $b_y$ is $u^n$. We then perform an order 1 Taylor expansion about $u^n=0$ of $fq$ as well as a Taylor expansion of $\dV$ about $u^n = 0$ using the first variation of area:

\begin{align*}
\frac{1}{\veps}  \int_{\ti{V}_j} K\l(\frac{b_y^2}{\veps^2}\r) f(y) q(y) d\textup{vol} &= \frac{1}{\veps}  \int_{\phi_{U_i}(\ti{V}_j)} k\l(\frac{(u^n)^2}{\veps^2}\r) f(u^\top,u^n) q(u^\top,u^n) \dV(u^\top, u^n)\\
&= \frac{1}{\veps}\Big(  \int_{\phi_{U_i}(\ti{V}_j)} k\l(\frac{(u^n)^2}{\veps^2}\r)f(u^\top,0) q(u^\top,0)\dV(u^\top, 0)\\
& +  \int_{\phi_{U_i}(\ti{V}_j)} k\l(\frac{(u^n)^2}{\veps^2}\r)f q(u^\top,0) H(u^\top,0) u^n \dV(u^\top, 0)\\
& +  \int_{\phi_{U_i}(\ti{V}_j)} k\l(\frac{(u^n)^2}{\veps^2}\r)\pderiv{fq}{u^n}(u^\top ,0)u^n \dV(u^\top, 0)\\
& +  \int_{\phi_{U_i}(\ti{V}_j)} k\l(\frac{(u^n)^2}{\veps^2}\r) \pderiv{fq}{u^n}(u^\top ,0)H(u^\top,0) (u^n)^2\dV(u^\top, 0)\\
&+  \int_{\phi_{U_i}(\ti{V}_j)} k\l(\frac{(u^n)^2}{\veps^2}\r) \omega(u^\top,\ti{u}^n))(u^n)^2\dV(u^\top, \ti{u}^n) \Big )
\end{align*}
where $\omega(u^\top,\ti{u}^n)$ is the sum of the second-order terms in both expansions with $0\leq \ti{u}^n<\veps^\gamma$. Since $\dV = \sqrt{|g|}$ in coordinates, and the $n$-th coordinate vector field $\partial_n=-\eta_y$ is orthogonal to each of the other coordinate vector fields, cofactor expansion of $\sqrt{|g|}$ implies that $\dV(u^\top,0)= \dV_\partial(u^\top)$. We can then separate terms involving $u^n$ to obtain:
\begin{align*}
\frac{1}{\veps}  \int_{\ti{V}_j} K\l(\frac{b_y^2}{\veps^2}\r) f(y) q(y) d\textup{vol} &= \frac{1}{\veps}\int_{u^n=0}^{u^n=\veps^\gamma} k\l(\frac{u^n}{\veps}\r) du^n \int_{y \in V_j} f(y)q(y) \dV_\partial\\
&+  \frac{1}{\veps}\int_{u^n=0}^{u^n=\veps^\gamma} k\l(\frac{u^n}{\veps}\r)u^n du^n \int_{V_j} f(y) q(y) H(y)\dV_\partial\\
& +  \frac{1}{\veps}\int_{u^n=0}^{u^n=\veps^\gamma} k\l(\frac{u^n}{\veps}\r) u^n du^n \int_{V_j}-\eta_y \cdot \nabla fq(y)\dV_\partial\\
&+  \frac{1}{\veps} \int_{u^n=0}^{u^n=\veps^\gamma} k\l(\frac{(u^n)^2}{\veps^2}\r)(u^n)^2 du^n \int_{y\in V_j} -\eta_y \cdot \nabla fq(y)H(y) \dV_\partial \\
&+ \frac{1}{\veps} \int_{u^n=0}^{u^n=\veps^\gamma} k\l(\frac{(u^n)^2}{\veps^2}\r)(u^n)^2 du^n \int_{y\in W_j} \omega(u^\top,\ti{u}^n) \dV_{\partial \M^{\ti{u}^n}} \\
\end{align*}
Where $W_j$ is the coordinate image of $\partial \M^{\ti{u}^n}$ in these coordinates (recall that $\partial \M_t$ indicates the hypersurface of points distance $t$ away from $\partial \M$.)

Since the integral over $V_j$ does not depend on $\veps$, we may use exponential decay of the kernel to extend the integral over $u^n$ to infinity. 
 By then making a substitution $u^n\mapsto \veps u$, and letting   $\overline m_0 = \int_0^{\infty} k(u)\, du$ and $\overline m_1 = \int_0^{\infty} k(u)u \, du$ we are left with:
\begin{align*}
\frac{1}{\veps}  \int_{\ti{V_j}} k\l(\frac{b_y}{\veps}\r) f(y) q(y) d\textup{vol}&= \overline{m}_0 \int_{\phi_{U_i}(V_j)} f(y)q(y) \dV_\partial\\
&+  \veps \overline{m}_1  \int_{\phi_{U_i}(V_j)} f(y) q(y) H(y)-\eta_y\cdot \nabla fq (y)\dV_\partial\\
&+\mc{O}(\veps^2)
\end{align*}
We remark that 
To compute the integral over the entire normal collar, we return to the parameterization of the integral in \eqref{parameter}:
\[
\int_{\mc{N}_{\veps^\gamma}}k\l(\frac{b_y^2}{\veps^2}\r)f(y)q(y)\dV = \sum_{j\in J} \int_{\ti{V_j}}k\l(\frac{b_y^2}{\veps^2}\r)f(y)q(y)\dV.
\]

Summation over all $\ti{V}_j$ in the manner above, we are left with:
\begin{align*}
\frac{1}{\veps}  \int_{N_{\veps^\gamma}} k\l(\frac{b_y^2}{\veps^2}\r) f(y) q(y) d\textup{vol}&= \overline{m}_0 \int_{\partial \M} f(y)q(y) \dV_\partial\\
&+  \veps\overline{m}_1  \int_{\partial \M} f(y) q(y) H(y)
-\eta_y \cdot \nabla fq(y)\dV_\partial+\mc{O}(\veps^2)
\end{align*}
from which the result follows.

\end{proof}

Theorem \ref{thm2} allows us to reinterpret integrals weighted by functions such as $m_1^\partial \propto e^{-\dM_x^2/\epsilon^2}$ (for which $\overline m_0=\sqrt{\pi}/2$ and $\overline m_1 = 1/2$) as boundary integrals up to higher order terms.  As in the previous section, this estimator is influenced by the sampling density $q$ and we will correct this in the next section.  

Returning to our example of the function $f(x) = x^4$ on the interval $[-1,1]$, in this case the boundary is the set $\{-1,1\}$ so the boundary integral is simply $\int_{\partial \mathcal{M}} x^4 \, d\textup{vol}_{\partial} = 1^4 + (-1)^4 = 2$.  Using the estimator from Theorem \ref{thm2} we can estimate this boundary integral as $\frac{\textup{vol}(\mathcal{M})}{\epsilon m_0}\mathcal{J}(f)$ which will have error of order-$\epsilon$ as shown in Fig.~\ref{figure2}.  The next order term in the expansion \eqref{boundaryIntExpansion} is  $- \epsilon \overline m_1 \int_{x \in \partial \mathcal{M}} \eta_x\cdot \nabla (fq)(x) d\textup{vol}_{\partial} = - 4\epsilon$ so the error in $\frac{\textup{vol}(\mathcal{M})}{\epsilon m_0}\mathcal{J}(f)+4\epsilon$ should be order-$\epsilon^2$ as shown in Fig.~\ref{figure2}.  Again, we emphasize that this example is purely for verification of Theorem \ref{thm2}, we will return to practical computation methods in Section \ref{PDEapps}.

\begin{figure}[h]
\centering
\includegraphics[width=0.45\textwidth]{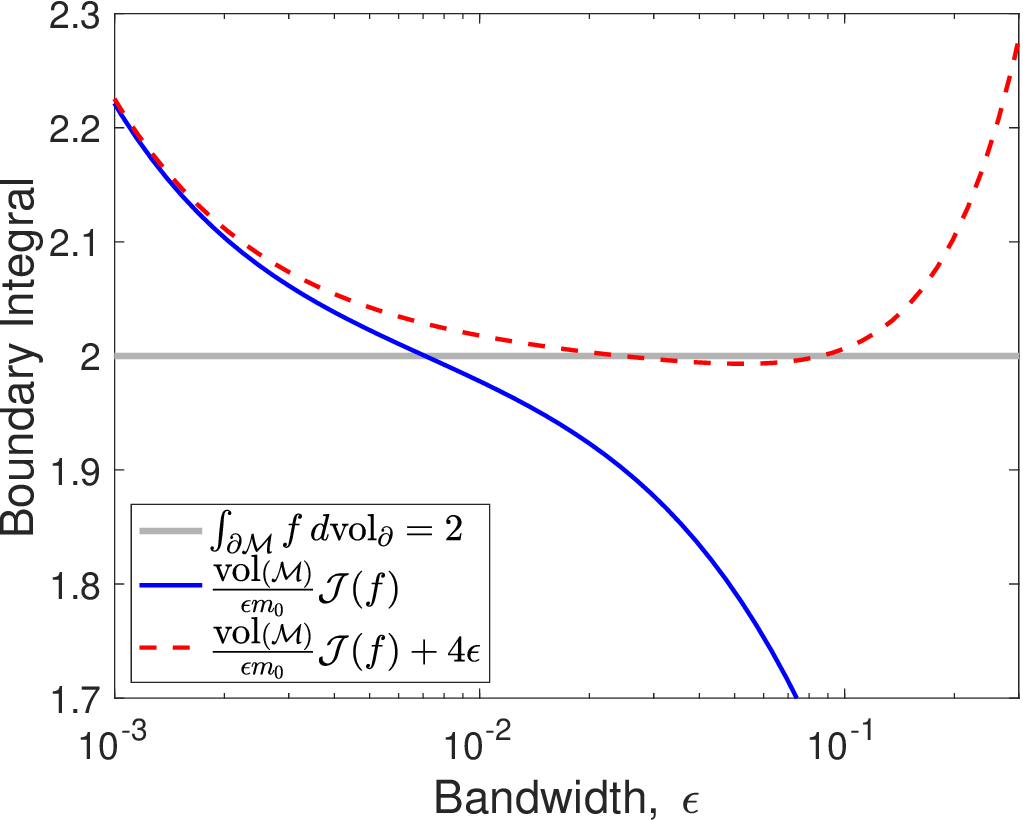}
\includegraphics[width=0.45\textwidth]{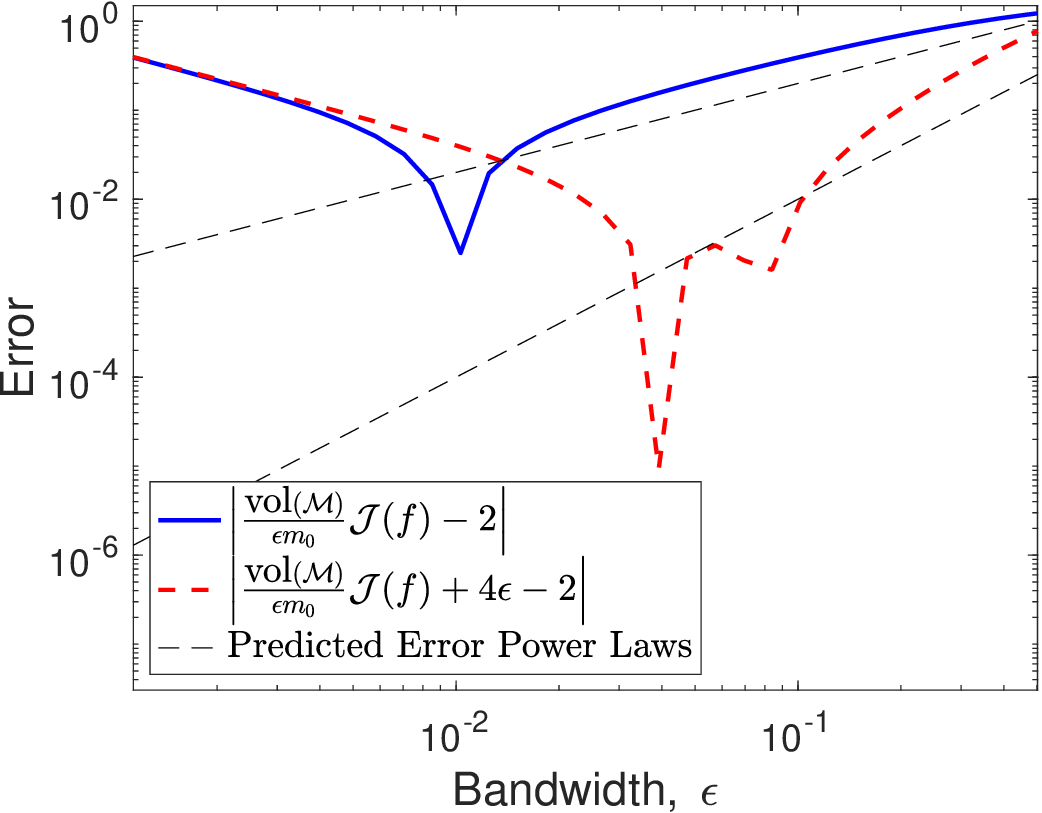}
\caption{Verifying \eqref{boundaryIntExpansion} by extracting the boundary integral on the interval $[-1,1]$ applied to the function $f(x) = x^4$. Left: We show the estimate of the boundary integral compared to the true value, $2$.  Right: Error vs. bandwidth for various estimators of the boundary integral.}
\label{figure2}
\end{figure}

\section{Estimating the Weak Laplacian}\label{estimators}

 The standard method of estimating the Laplace operator is with a graph Laplacian, \[ {\bf L} = c({\bf D}-{\bf K}) \] where $c=\epsilon^{-(m+2)}N^{-1}$ and ${\bf K}_{ij} = K(\epsilon,x_i,x_j)$ is the kernel matrix so that $({\bf K}\vec f)_i \propto \mathcal{K}f(x_i)$ where $\vec f_j = f(x_j)$.  The matrix ${\bf D}$ is diagonal with ${\bf D}_{ii} = ({\bf K}\vec 1)_i \propto \mathcal{K}1(x_i)$.  By Theorem \ref{thm1}, for $x$ near the boundary we have
 \[ \epsilon^{-m}\mathbb{E}[\mathcal{K}(f)(x)] = \mathcal{I}(f)(x) = m_0^\partial(x) f(x)q(x) + m_1^\partial(x) \eta_x \cdot \nabla(fq)(x) + \mathcal{O}(\epsilon^2) \]
and the expected value of ${\bf L}\vec f$ near the boundary is,
 \begin{align*}
 \mathbb{E}\left[({\bf L}\vec f)_i\right] &= \frac{\epsilon^{-m}\mathbb{E}[{\bf D}\vec f] - \epsilon^{-m}\mathbb{E}[{\bf K}\vec f]}{N\epsilon^2} \\
 &= \frac{\epsilon^{-m}\mathbb{E}[f(x_i)\mathcal{K}(1)(x_i)] - \epsilon^{-m}\mathbb{E}[\mathcal{K}(f)(x_i)]}{N\epsilon^2} \\
 &= -\epsilon^{-1}m_1^\partial(x_i)q(x_i) \eta_{x_i} \cdot \nabla f(x_i) + \mathcal{O}(1).
  \end{align*}
Notice that the estimator blows up like $\epsilon^{-1}$ near the boundary.  This was first pointed out in \cite{diffusion} who derived the first two terms of \eqref{fullexpansion}.  They argued that the graph Laplacian is a consistent estimator for Neumann functions (where $\eta_{x_i}\cdot \nabla f(x_i)=0$), which is true but does not explain the empirically observed fact that the eigenvectors of ${\bf L}$ approximate the Neumann eigenfunctions of the Laplacian.  While this pointwise blow-up seems discouraging, notice that it is blowing up with rate $\epsilon^{-1}$ in a neighborhood of the boundary that has volume of order $\epsilon$.  In this section we show that in fact, considered in the weak sense, the graph Laplacian ${\bf L}$ does have a well defined limit, namely the Dirichlet energy or weak Laplacian.

We now use \eqref{interiorexpansion} and \eqref{fullexpansion} together with \eqref{boundaryIntExpansion} to understand the weak form of the kernel operator and the graph Laplacian.  In order to make this connection, we will require the following surprising result connecting the integral of the first moment near the boundary to the second moment in the interior.
\begin{lemma}\label{momentslemma}
For any $z\in \NN$,
\[
\int_0^\infty m^\partial_1(b_x) \ db_x = \frac{m_2}{2} +\mc{O}(\veps^z)
\]
\end{lemma}
The proof of lemma \ref{momentslemma} is included in \ref{thm3proof}.

When we apply the kernel operator to a function $f$ and evaluate at all the data points we obtain a vector $\frac{\epsilon^{-m}}{N}({\bf K}\vec f)_i = \mathcal{K}f(x_i)$ which is simply the kernel matrix multiplied by the vector representation of the function $\vec f_i = f(x_i)$.  If we then take another function $\vec \phi_i= \phi(x_i)$ and compute the inner product, the expectation will be a second integral,
\begin{align}\label{weakform} \frac{\epsilon^{-m}}{N^2} \mathbb{E}\left[\vec \phi^{\top}\,{\bf K}\vec f \right] &= \epsilon^{-m} \int_{x\in \mathcal{M}} \int_{y\in \mathcal{M}} \phi(x)q(x)K\left(\frac{|x-y|}{\epsilon}\right)f(y)q(y) \, d\textup{vol} \, d\textup{vol}.
\end{align}
By expanding the inner integral using \eqref{fullexpansion} and applying Theorem \ref{thm2} we derive the following result.
\begin{theorem}\label{thm3} In the same context as Theorem \ref{thm1} for all $\phi,f \in C^3(\mathcal{M})$ we have,
\begin{align}\label{weakexpansionLaplacian} \frac{2}{m_2 N} \mathbb{E}\left[\vec \phi^{\top}\,{\bf L}\vec f \right] &= \int_{\mathcal{M}} (\nabla\phi \cdot \nabla f) \, q^2 d\textup{vol} + \mathcal{O}(\epsilon).
\end{align}
\end{theorem}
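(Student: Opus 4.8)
The plan is to pass from the discrete quadratic form to a double integral, apply the uniform expansion of Theorem~\ref{thm1} to the inner integral, integrate against $\phi q$, and then combine Green's identity with the boundary integral estimate of Theorem~\ref{thm2} so that the divergent boundary contributions cancel. First I would write ${\bf L} = \epsilon^{-(m+2)}N^{-1}({\bf D}-{\bf K})$, so that $\vec\phi^{\top}({\bf D}-{\bf K})\vec f = \sum_{i,j}\phi(x_i)\big(f(x_i)-f(x_j)\big)K(\epsilon,x_i,x_j)$; the diagonal terms vanish, and taking the expectation over the i.i.d.\ sample (the quadrature being unbiased) turns the off-diagonal sum into $N(N-1)$ copies of the double integral $\int_{\mathcal{M}}\int_{\mathcal{M}}\phi(x)q(x)K(\epsilon,x,y)\big(f(x)-f(y)\big)q(y)\,dV\,dV$. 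Hence, since $(N-1)/N\to 1$,
\[ \frac{2}{m_2 N}\mathbb{E}\!\left[\vec\phi^{\top}{\bf L}\vec f\right] = \frac{2}{m_2}\int_{\mathcal{M}}\phi(x)q(x)\,\epsilon^{-(m+2)}h(x)\,dV(x) + \mathcal{O}(N^{-1}), \qquad h(x):=\int_{\mathcal{M}}K(\epsilon,x,y)\big(f(x)-f(y)\big)q(y)\,dV(y). \]

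Next I would expand $h(x)$ pointwise by applying Theorem~\ref{thm1} to the function $g_x(y):=\big(f(x)-f(y)\big)q(y)$: since $g_x(x)=0$, all zeroth-order terms ($m_0^{\partial}g_x(x)$, $\tilde\omega g_x(x)$, $Hg_x(x)$) drop out, and a short computation gives $\nabla g_x(x)=-q(x)\nabla f(x)$, $\Delta g_x(x)=-q(x)\Delta f(x)-2\nabla f(x)\cdot\nabla q(x)$, $\partial_{\eta_x}^2 g_x(x)=-q(x)\partial_{\eta_x}^2 f(x)-2\,\partial_{\eta_x}f(x)\,\partial_{\eta_x}q(x)$. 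Theorem~\ref{thm1} then yields, uniformly in $x$ (the family $\{g_x\}_{x\in\mathcal{M}}$ is bounded in $C^3$ because $f,q\in C^3$ and $\mathcal{M}$ is compact),
\[ \epsilon^{-m}h(x)=-\epsilon\,m_1^{\partial}(x)q(x)\partial_{\eta_x}f(x)-\frac{\epsilon^2}{2}m_2\big(q\Delta f+2\nabla f\cdot\nabla q\big)(x)-\frac{\epsilon^2}{2}\big(m_2^{\partial}(x)-m_2\big)\big(q\,\partial_{\eta_x}^2 f+2\,\partial_{\eta_x}f\,\partial_{\eta_x}q\big)(x)+\mathcal{O}\!\big(\epsilon^3 e^{-b_x^2/\epsilon^2},\epsilon^4\big). \]
After dividing by $\epsilon^2$ and integrating $\tfrac{2}{m_2}\phi q$ against this over the compact $\mathcal{M}$, the $\mathcal{O}(\epsilon^4)$ error contributes $\mathcal{O}(\epsilon^2)$, and the $\mathcal{O}(\epsilon^3 e^{-b_x^2/\epsilon^2})$ error also contributes $\mathcal{O}(\epsilon^2)$ because the weight $e^{-b_x^2/\epsilon^2}$ concentrates the integral in an $\epsilon$-neighbourhood of $\partial\mathcal{M}$ in the manner of Lemma~\ref{lemma1}.

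It then remains to treat three groups of terms. The $m_2$-group gives $-\int_{\mathcal{M}}\phi q\big(q\Delta f+2\nabla f\cdot\nabla q\big)\,dV$; applying Green's first identity with outward normal $\eta_x$, $-\int_{\mathcal{M}}\phi q^2\Delta f\,dV=\int_{\mathcal{M}}\nabla(\phi q^2)\cdot\nabla f\,dV-\int_{\partial\mathcal{M}}\phi q^2\,\partial_{\eta_x}f\,dV_{\partial}$, and since $\nabla(\phi q^2)=q^2\nabla\phi+2\phi q\nabla q$ the $\nabla f\cdot\nabla q$ pieces cancel, leaving $\int_{\mathcal{M}}(\nabla\phi\cdot\nabla f)q^2\,dV-\int_{\partial\mathcal{M}}\phi q^2\,\partial_{\eta_x}f\,dV_{\partial}$. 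The $m_1^{\partial}$-group gives $-\tfrac{2}{m_2}\epsilon^{-1}\int_{\mathcal{M}}m_1^{\partial}(x)\phi q^2\,\partial_{\eta_x}f\,dV$; substituting $m_1^{\partial}(x)=-\tfrac12\pi^{(m-1)/2}e^{-b_x^2/\epsilon^2}$ and invoking Theorem~\ref{thm2} with the kernel $K(z)=e^{-z^2}$ (so $\overline m_0=\sqrt{\pi}/2$), this equals $\tfrac{\pi^{(m-1)/2}}{m_2}\cdot\tfrac{\sqrt{\pi}}{2}\int_{\partial\mathcal{M}}\phi q^2\,\partial_{\eta_x}f\,dV_{\partial}+\mathcal{O}(\epsilon)=\tfrac{\pi^{m/2}}{2m_2}\int_{\partial\mathcal{M}}\phi q^2\,\partial_{\eta_x}f\,dV_{\partial}+\mathcal{O}(\epsilon)$; since $m_2=\pi^{m/2}/2$ from \eqref{coeff}, the prefactor is exactly $1$, so this group equals $+\int_{\partial\mathcal{M}}\phi q^2\,\partial_{\eta_x}f\,dV_{\partial}+\mathcal{O}(\epsilon)$ and cancels the boundary term from Green's identity. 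Finally, the explicit formula for $m_2^{\partial}$ shows $m_2^{\partial}(x)-m_2$ equals a polynomial in $b_x/\epsilon$ times $e^{-b_x^2/\epsilon^2}$ plus $\textup{erf}(b_x/\epsilon)-1$, hence is concentrated within $\mathcal{O}(\epsilon)$ of $\partial\mathcal{M}$; changing to boundary normal coordinates and substituting $t=\epsilon s$ gives $\int_{\mathcal{M}}|m_2^{\partial}(x)-m_2|\,dV=\mathcal{O}(\epsilon)$, so the $m_2^{\partial}$-group is $\mathcal{O}(\epsilon)$. Summing the three groups produces exactly $\int_{\mathcal{M}}(\nabla\phi\cdot\nabla f)q^2\,dV+\mathcal{O}(\epsilon)$.

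The hard part will be the uniformity and integrability of the error rather than any single calculation: Theorem~\ref{thm1} is stated for a fixed test function, whereas here the inner function $g_x$ varies with the base point $x$, so one must confirm that the asymptotic constants are uniform over $\{g_x\}_{x\in\mathcal{M}}$, and --- more delicately --- that the error $\mathcal{O}(\epsilon^3 e^{-b_x^2/\epsilon^2})$, which is only $\mathcal{O}(\epsilon)$ pointwise after dividing by $\epsilon^2$, in fact contributes $\mathcal{O}(\epsilon^2)$ after integration thanks to its exponential localization near $\partial\mathcal{M}$. Conceptually the whole theorem turns on the exact cancellation of the $\epsilon^{-1}$ boundary term --- which is precisely why the symmetric matrix ${\bf L}$ can represent a symmetric bilinear form in the weak limit --- and that cancellation rests on the arithmetic identity $\pi^{m/2}/(2m_2)=1$ matching the constant $\overline m_0=\sqrt{\pi}/2$ delivered by Theorem~\ref{thm2}.
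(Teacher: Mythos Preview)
Your proposal is correct and follows essentially the same route as the paper's proof: expand via Theorem~\ref{thm1}, convert the $m_1^{\partial}$ term to a boundary integral using Theorem~\ref{thm2}, apply Green's identity, and observe the exact boundary cancellation because $m_2=m_0/2$ (equivalently $\pi^{m/2}/(2m_2)=1$). Your device of applying Theorem~\ref{thm1} directly to $g_x(y)=(f(x)-f(y))q(y)$, so that $g_x(x)=0$ kills the $m_0^{\partial}$, $H$, and $\tilde\omega$ terms in one stroke, is a mild streamlining of the paper's approach, which instead expands $\vec\phi^{\top}{\bf K}\vec f$ and $\vec\phi^{\top}{\bf D}\vec f$ separately (with inner functions $fq$ and $q$) and then subtracts to achieve the same cancellations.
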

\begin{proof} Recall that ${\bf L} = \epsilon^{-(m+2)}N^{-1}({\bf D} - {\bf K})$ and ${\bf D}_{ii} = \sum_{j=1}^N {\bf K}_{ij}$ so that,
\begin{align*}
     \frac{1}{N}\mathbb{E}\left[\vec \phi^{\top}\,{\bf L}\vec f \right] &= \mathbb{E}\left[\frac{\epsilon^{-m-2}}{N^2}\sum_{i,j=1}^N \phi(x_i)K(\epsilon,x_i,x_j)(f(x_i) - f(x_j)) \right] \\
     &= \epsilon^{-m-2}\int_{x\in\mathcal{M}}\int_{y\in\mathcal{M}} q(x)\phi(x) K(\epsilon,x,y)(f(x)-f(y))q(y) \dV \dV  \\
    &= \epsilon^{-m-2}\int_{x\in\mathcal{M}} q(x)\phi(x)(f(x)\mathcal{K}(1)(x)-\mathcal{K}(f)(x)) \dV 
     \end{align*}
We now segment the manifold into the disjoint union $\mathcal{M} = M_{\epsilon^\gamma}\cup \mc{N}_{\epsilon^\gamma}$ where $N_{\epsilon^\gamma} = B_{\epsilon^{\gamma}}(\partial\mathcal{M})$ is a neighborhood of the boundary and $M_{\epsilon^{\gamma}} = \mathcal{M}\backslash \mc{N}_{\epsilon^{\gamma}}$ is the interior region.  By Theorem \ref{interiorexpansion}, we see that the integral over the interior region is:
\begin{align*}
\epsilon^{-m-2} \int_{x\in \M_{\veps^\gamma}} &\phi(x)q(x) \l(f(x) \mc{I}_\veps (1)(x) - \mc{I}_\veps (f)(x)
\r) \dV  \\ &= \epsilon^{-2}\int_{x\in \M_{\veps^\gamma}} \phi(x)q(x) f(x)\l(m_0 q(x)+ \frac{\veps^2m_2}{2}(S(x)q(x) +\Delta q(x)) +\mc{O}(\veps^3) \r)  \\ &\hspace{15pt} -\phi(x)q(x) \l( m_0f(x)q(x) + \frac{\veps^2m_2}{2} \Big(f(x)q(x)S(x)+\Delta(fq)(x)\Big)  +\mc{O}(\veps^3)\r)    \dV \\
&=-\frac{m_2}{2}\int_{x\in \M_{\veps^\gamma}}q(x)^2\phi(x)\Delta f(x) \dV - \frac{m_2}{2} \int_{x\in \M_{\veps^\gamma}} 2q(x)\phi(x) \nabla f \cdot \nabla q \dV +\mc{O}(\veps).
\end{align*}
Moreover, applying the divergence theorem (for the negative definite Laplacian $\Delta = \textup{div}\circ \nabla$, so that $\int_{\partial} h\eta\cdot \nabla f = \int \textup{div}(h \nabla f) = \int h\Delta f + \nabla h\cdot \nabla f \dV$) we have, 
\begin{align*} -\int_{\M_{\veps^\gamma}}q^2\phi \Delta f \dV &= \int_{\M_{\veps^\gamma}}\nabla(q^2\phi)\cdot \nabla f \dV - \int_{\partial \M_{\veps^{\gamma}}} q^2 \phi \eta \cdot \nabla f \dV_\partial \\
&=\int_{\M_{\veps^\gamma}} q^2\nabla \phi \cdot \nabla f + 2q\phi \nabla q \cdot \nabla f \dV - \int_{\partial \M_{\veps^{\gamma}}} q^2 \phi \eta \cdot \nabla f \dV_\partial \end{align*}
and combining this with the previous equation we have,
\begin{align}
\epsilon^{-m-2} \int_{x\in \M_{\veps^\gamma}} &\phi(x)q(x) \l(f(x) \mc{I}_\veps (1)(x) - \mc{I}_\veps (f)(x)
\r) \dV \nonumber \\ &= \frac{m_2}{2} \int_{\M_{\epsilon^\gamma}} q^2 \nabla \phi\cdot \nabla f \dV -  \frac{m_2}{2}\int_{\partial \M_{\epsilon^\gamma}} q^2 \phi \eta \cdot \nabla f \dV_\partial + \mathcal{O}(\epsilon) \label{interiorpart} 
\end{align}
By Theorem \ref{fullexpansion}, the integral over the boundary region is:
\begin{align*}
\epsilon^{-m-2} \int_{x\in \mc{N}_{\veps^\gamma}} &\phi(x)q(x) \l(f(x) \mc{I}_\veps (1)(x) - \mc{I}_\veps (f)(x)
\r) \dV  \\
& =\epsilon^{-2} \int_{x\in \mc{N}_{\veps^\gamma}} q(x)\phi(x) \Big( m_0^\partial(x)q(x) f(x) + \veps m_1^\partial(x)f(x)\eta_x \cdot \nabla q(x) - \frac{m-1}{2}H(x)f(x)q(x)  \\
&\hspace{15pt}-m_0^\partial(x)q(x) f(x) - \veps m_1^\partial(x)\eta_x \cdot \nabla(fq)(x) + \frac{m-1}{2}H(x)f(x)q(x)+\mc{O}(\veps^2)\Big) \dV  \\
&=-\veps^{-1} \int_{\mc{N}_{\veps^\gamma}}m_1^\partial(x)q(x)^2\phi(x)\eta_x \cdot \nabla f(x) +\mc{O}(\epsilon) \dV. 
\end{align*}
We now notice that the function $m_1^\partial(x)$ is purely a function of the distance to the boundary $b_x$ and that it decays to zero exponentially as $b_x \rightarrow \infty$. Hence, we may apply the result of Theorem \ref{thm2}. We then have that,
\begin{align}
\epsilon^{-m-2} \int_{x\in \mc{N}_{\veps^\gamma}} &\phi(x)q(x) \l(f(x) \mc{I}_\veps (1)(x) - \mc{I}_\veps (f)(x)
\r) \dV \nonumber \\ &=-\veps^{-1} \int_{\mc{N}_{\veps^\gamma}}m_1^\partial(x)q(x)^2\phi(x)\eta_x \cdot \nabla f(x) + \mc{O}(\veps) \dV \nonumber \\
&= -\frac{m_2}{2} \int_{\partial \mathcal{M}} q(x)^2 \phi(x) \eta_x \cdot \nabla f(x)\dV_\partial +\mc{O}(\veps). \label{boundarypart} 
\end{align}
where the integral of $m_1^\partial(x)$ is exactly $\frac{m_2}{2}$ by Lemma \ref{momentslemma}. 
Adding \eqref{interiorpart} and \eqref{boundarypart} we have,
\begin{align}
    \frac{1}{N}\mathbb{E}\left[\vec \phi^{\top}\,{\bf L}\vec f \right] &= \frac{m_2}{2} \int_{\M_{\epsilon^\gamma}} q^2 \nabla \phi\cdot \nabla f \dV -  \frac{m_2}{2}\int_{\partial \overline{\mc{N}}_{\epsilon^\gamma}} q^2 \phi \eta \cdot \nabla f \dV_\partial + \mathcal{O}(\epsilon)
\end{align}
where $\overline{\mc{N}}_{\epsilon^{\gamma}}$ is the closure of the boundary region, and its boundary is the union of the boundary of the interior region with the boundary of the manifold, $\partial \overline{\mc{N}}_{\epsilon^{\gamma}} = \partial \M_{\epsilon^{\gamma}}\cup \partial \mathcal{M}$.  We then apply the Divergence theorem on the closure of the boundary region $\overline{\mc{N}}_{\veps^\gamma}$.
\begin{align*}
\int_{\partial \overline{\mc{N}}_{\veps^\gamma}} q^2 \phi \eta \cdot \nabla f \dV_\partial &=  \int_{\overline{\mc{N}}_{\veps^\gamma}} \textnormal{div}(q^2\phi \nabla f) \dV_\partial \\
&=\int_{x\in \mathcal{M}} 1_{B_{\epsilon^\gamma}(\partial\mathcal{M})}(x) \textnormal{div}(q^2\phi \nabla f) \dV_\partial \\ 
&= \mathcal{O}(\epsilon)
\end{align*}
by Theorem \ref{thm2}, since the indicator function $1_{B_{\epsilon^\gamma}(\partial\mathcal{M})}(x)$ has exponential decay in the distance to the boundary as required by Theorem \ref{thm2}. Gathering the order-$\epsilon$ terms and dividing by $\frac{m_2}{2}$ we have,
\[ \frac{2}{m_2 N}\mathbb{E}\left[\vec \phi^{\top}\,{\bf L}\vec f \right] = \int_{\M_{\epsilon^\gamma}} q^2 \nabla \phi\cdot \nabla f \dV + \mathcal{O}(\epsilon) \]
Similarly, we can add and subtract the integral of the same integrand over the boundary region (since this is order-$\epsilon$) and we obtain,
\[ \frac{2}{m_2 N}\mathbb{E}\left[\vec \phi^{\top}\,{\bf L}\vec f \right] =  \int_{\mathcal{M}} q^2  \nabla \phi\cdot \nabla f  \dV + \mathcal{O}(\epsilon) \]
as desired.
\end{proof}
 
Returning to our simple example of the uniform grid on the interval $[-1,1]$ with $f(x)=x^4$, we verify \eqref{weakexpansionLaplacian} by estimating $\int_{\mathcal{M}} |\nabla f|^2 \, d\textup{vol} = 32/7$ using the graph Laplacian ${\bf L}$.  Notice that the optimal bandwidth for the weak sense estimator is much smaller than the optimal bandwidth for the pointwise estimator which results from the double summation being a lower variance estimator as shown in \cite{berrysauer19}.   

\begin{figure}[h]
\centering
\includegraphics[width=0.45\textwidth]{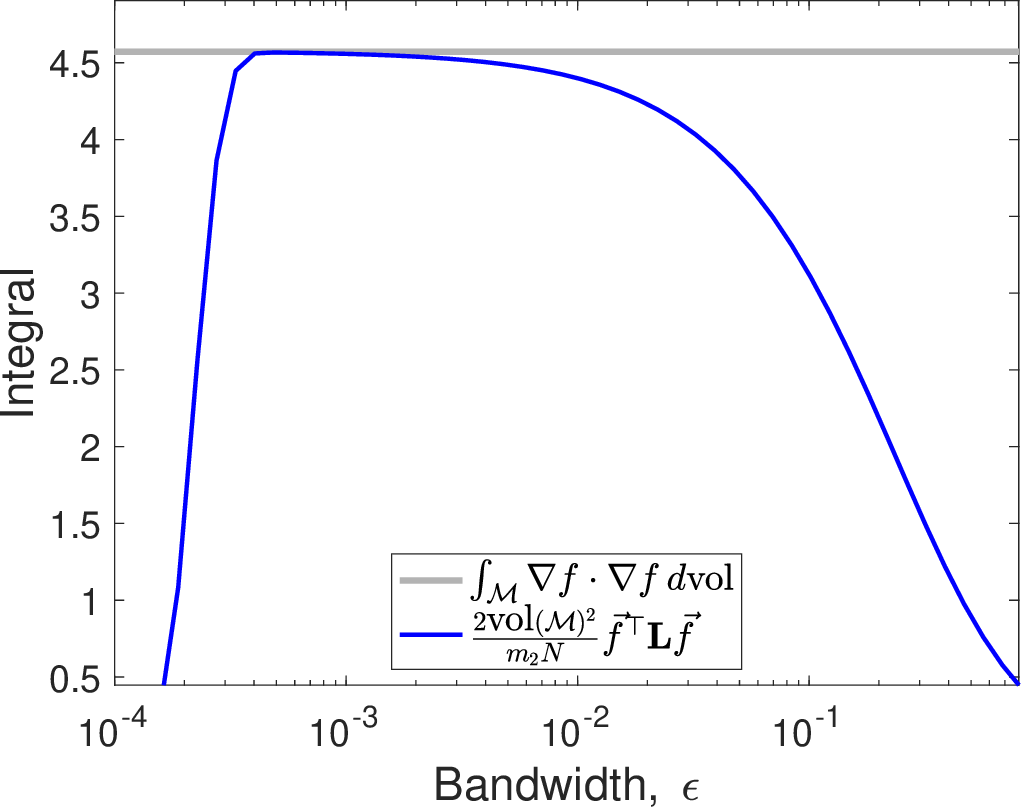}
\includegraphics[width=0.45\textwidth]{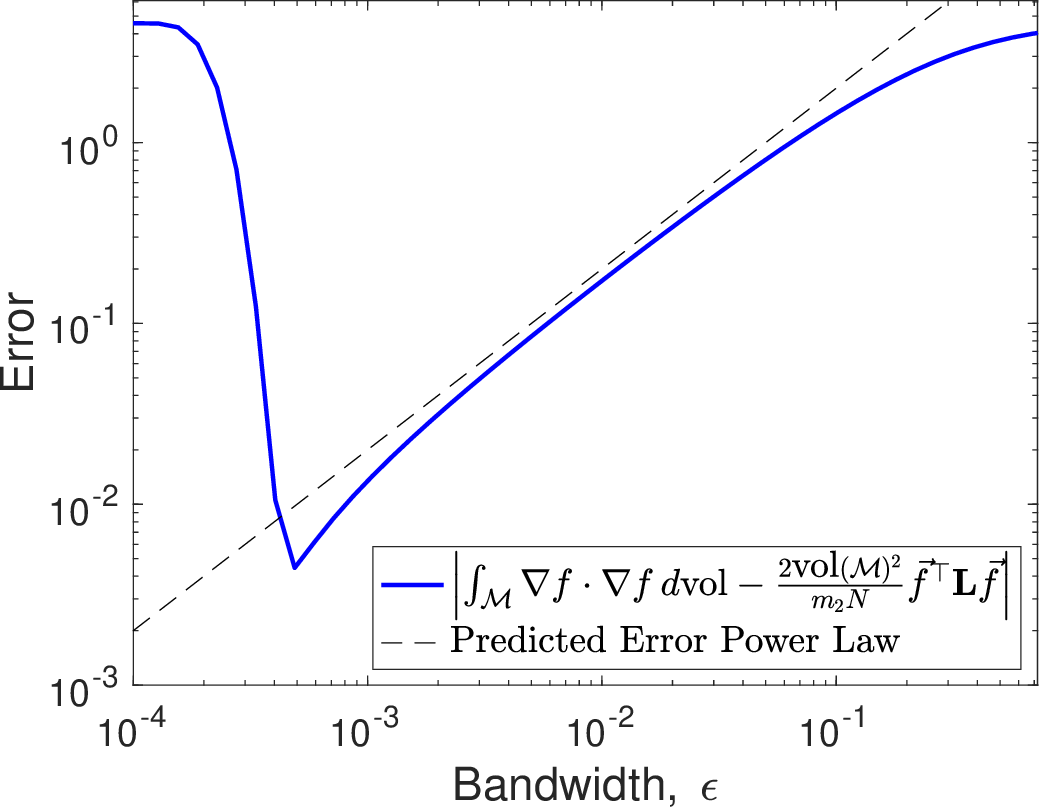}
\caption{Verifying \eqref{weakexpansionLaplacian} computing the exact integral and the graph Laplacian estimator on the interval $[-1,1]$ applied to the function $\phi(x) = f(x) = x^4$. Left: We compare the estimate to the true integral value as a function of the bandwidth parameter, $\epsilon$.  Right: Error vs. bandwidth for the graph Laplacian as a weak-sense estimator.}
\label{figure3}
\end{figure}  

The perhaps surprising conclusion of this section is that, even though the graph Laplacian is not a consistent pointwise estimator of the Laplacian for manifolds with boundary, it is a consistent weak-sense estimator.  We should note that, if one simply removes the boundary and considers the interior, the graph Laplacian will be consistent pointwise at each point of the interior, however the rates of convergence will not be uniform, and the the bandwidth required for pointwise consistency will decrease to zero as you approach the boundary.  

\subsection{Correcting for the sampling density}\label{density}

In \eqref{interiorexpansion}, \eqref{fullexpansion} and \eqref{weakexpansionLaplacian} all the terms are influenced by the density $q$, so to remove this influence we apply the `right-normalization' introduced by \cite{diffusion}.  The idea is to apply \eqref{fullexpansion} to the function $f \equiv 1$ in order to extract a density estimate.  Computationally this means multiplying the kernel matrix by a vector $\vec 1$ of all ones, or equivalently summing the rows of the kernel matrix, which are the diagonal entries of the diagonal matrix ${\bf D}_{ii} = ({\bf K}\vec 1)_i$.  We then normalize the kernel matrix ${\bf K}$ to form the matrices ${\bf \hat K, \hat D, \hat L}$ given by, 
\[ {\bf \hat K} = {\bf D^{-1}KD^{-1}}  \hspace{15pt}\textup{ and }\hspace{15pt} {\bf \hat D}_{ii} = ({\bf \hat K}\vec 1)_i \hspace{15pt}\textup{ and }\hspace{15pt} {\bf \hat L} = \epsilon^{m-2}N^{-1}({\bf\hat D}-{\bf \hat K}). \]
Notice that since ${\bf D}_{ii}$ is proportional to $q(x_i)$ we are essentially pre-dividing by a consistent estimator of $q(x)^2$.  The next theorem shows that this normalization, introduced by \cite{diffusion}, produces a consistent estimator of the Dirichlet energy that is independent of the sampling density of the points $x_i$.

\begin{theorem}\label{thm4} In the same context as Theorem \ref{thm1} for all $\phi,f \in C^3(\mathcal{M})$ we have,
\begin{align}\label{nonuniform} \frac{2 m_0^2}{m_2 N} \mathbb{E}\left[\vec \phi^{\top}\,{\bf \hat L}\vec f \right] &= \int_{\mathcal{M}} \nabla\phi \cdot \nabla f \, d\textup{vol} + \mathcal{O}(\epsilon).
\end{align}
\end{theorem}
\begin{proof} Set $\hat q(x) = \epsilon^{-m}\mathcal{K}(1)(x) = \epsilon^{-m}\sum_{j=1}^N K(\epsilon,x,x_j)$ so that $\hat q(x_i) = \epsilon^{-m}{\bf D}_{ii}$ and we have
\begin{align*}
     \frac{1}{N}\mathbb{E}\left[\vec \phi^{\top}\,{\bf \hat L}\vec f \right] &= \mathbb{E}\left[\frac{\epsilon^{-m-2}}{N^2}\sum_{i,j=1}^N \phi(x_i)\frac{K(\epsilon,x_i,x_j)}{\hat q(x_i)\hat q(x_j)}(f(x_i) - f(x_j)) \right] \\
     &= \epsilon^{-m-2}\int_{x\in\mathcal{M}}\int_{y\in\mathcal{M}} \frac{q(x)}{\hat q(x)}\phi(x) K(\epsilon,x,y)(f(x)-f(y))\frac{q(y)}{\hat q(y)} \dV \dV  \\
    &= \epsilon^{-m-2}\int_{x\in\mathcal{M}} \frac{q(x)}{\hat q(x)}\phi(x)(f(x)\mathcal{K}(1/\hat q)(x)-\mathcal{K}(f/\hat q)(x)) \dV 
     \end{align*}
     Note that by Theorems \ref{interiorexpansion} and \ref{boundaryexpansion}, on the interior of the manifold we have $\hat q(x) = m_0 q(x) + \mathcal{O}(\epsilon)$ and on the boundary we have $\hat q(x) = m_0^\partial(x)q(x) + \mathcal{O}(\epsilon)$.  So for $x$ in the interior region, the above reduces to 
     \[ \epsilon^{-m-2}\int_{x\in\mathcal{M}} \phi(x)(f(x)\mathcal{K}(1/\hat q)(x)-\mathcal{K}(f/\hat q)(x)) \dV = -\frac{m_2}{2 m_0^2}\int_{x\in \M_{\veps^\gamma}}\phi(x)\Delta f(x) \dV\]
     and for $x$ in the boundary region we have,
     \[ \frac{\epsilon^{-m-2}}{m_0^2}\int_{x\in\mathcal{M}} \phi(x)(f(x)\mathcal{K}(1/\hat q)(x)-\mathcal{K}(f/\hat q)(x)) \dV = -\veps^{-1} \int_{\mc{N}_{\veps^\gamma}}\frac{m_1^\partial(x)}{m_0^\partial(x)}\phi(x)\eta_x \cdot \nabla f(x) +\mc{O}(\epsilon) \dV \]
     and since $\frac{m_1^\partial}{m_0^\partial}$ has the necessary decay away from the boundary, it satisfies the hypotheses if Theorem \ref{thm2} so that,
     \[ \int_{\mc{N}_{\veps^\gamma}}\frac{m_1^\partial(x)}{m_0^\partial(x)}\phi(x)\eta_x \cdot \nabla f(x) +\mc{O}(\epsilon) \dV = \frac{m_2}{2 m_0^2}\int_{\partial\mathcal{M}}\phi(x) \eta_x \cdot \nabla f(x) \, d\textup{vol}_\partial \]
     since the integral of $m_1^\partial$ is $\frac{m_2}{2}$ by Lemma \ref{momentslemma}.  Recombining the integral over the interior region and the integral of the boundary region we have,
     \begin{align*}
          \frac{2m_0^2}{N m_2}\mathbb{E}\left[\vec \phi^{\top}\,{\bf \hat L}\vec f \right] &= -\int_{x\in \M_{\veps^\gamma}}\phi(x)\Delta f(x) \dV +  \int_{\partial\mathcal{M}}\phi(x) \eta_x \cdot \nabla f(x) \, d\textup{vol}_\partial + \mathcal{O}(\epsilon)  \\
          &= \int_{\M_{\epsilon^\gamma}}\nabla\phi\cdot\nabla f \, d\textup{vol} + \int_{\partial \M_{\epsilon^{\gamma}} \cup \partial\mathcal{M}} \phi \eta_x\cdot \nabla f \, d\textup{vol}_\partial + \mathcal{O}(\epsilon)
          \end{align*} 
    and as in the proof of Theorem \ref{thm3}, the boundary integral term above can be converted into an integral of the boundary region $\mc{N}_{\epsilon^{\gamma}}$ by the divergence theorem, and this integral is order-$\epsilon$ by Theorem \ref{thm2}.  Similarly, extending the integral $\int_{\M_{\epsilon^\gamma}}\nabla\phi\cdot\nabla f \, d\textup{vol}$ to all of $\mathcal{M}$ also creates an error of order-$\epsilon$ so we obtain \eqref{nonuniform}.
     \end{proof}

To verify \eqref{nonuniform} in our simple example on $[-1,1]$ we started with a uniform grid of $N=5000$ points and then applied the nonlinear transformation $(x+.05)^{1.2}$ to each point and then shift and scale the resulting grid back to $[-1,1]$.  The result is a nonuniform grid with higher density near $-1$ and lower density near $1$.  In Fig.~\ref{figure4} we show that the normalized graph Laplacian ${\bf \hat L}$ constructed in this section recovers the weak-sense Laplacian for the same example as in the previous section on this nonuniform grid. 

\begin{figure}[h]
\centering
\includegraphics[width=0.45\textwidth]{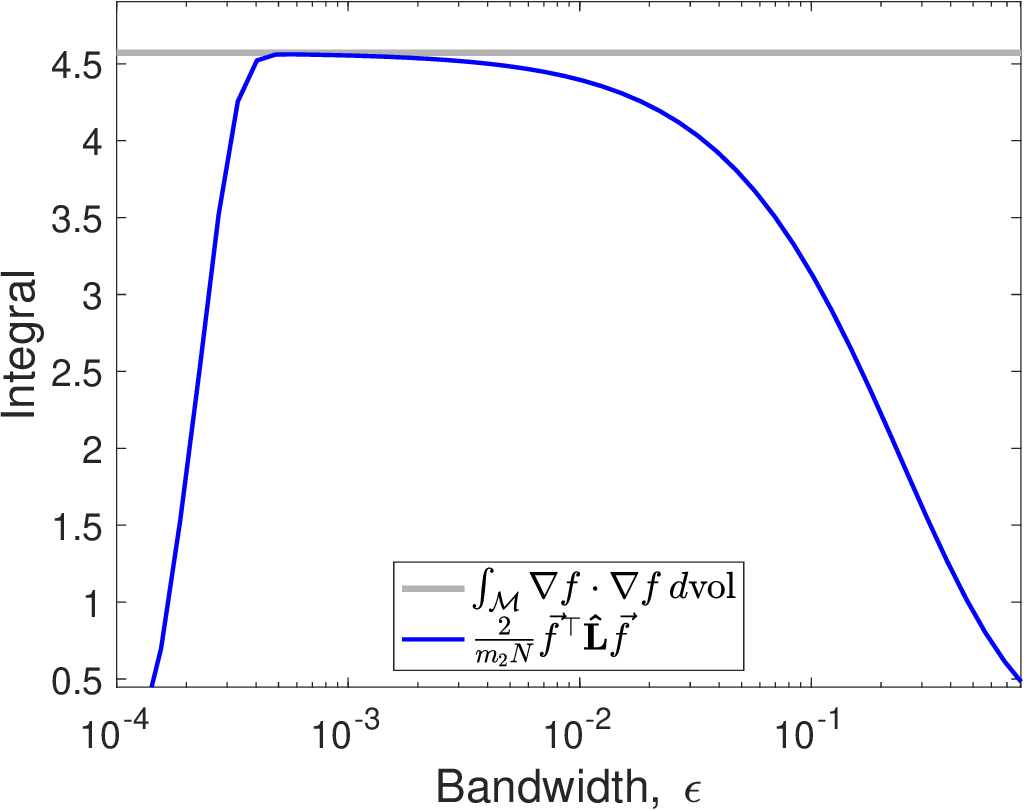}
\includegraphics[width=0.45\textwidth]{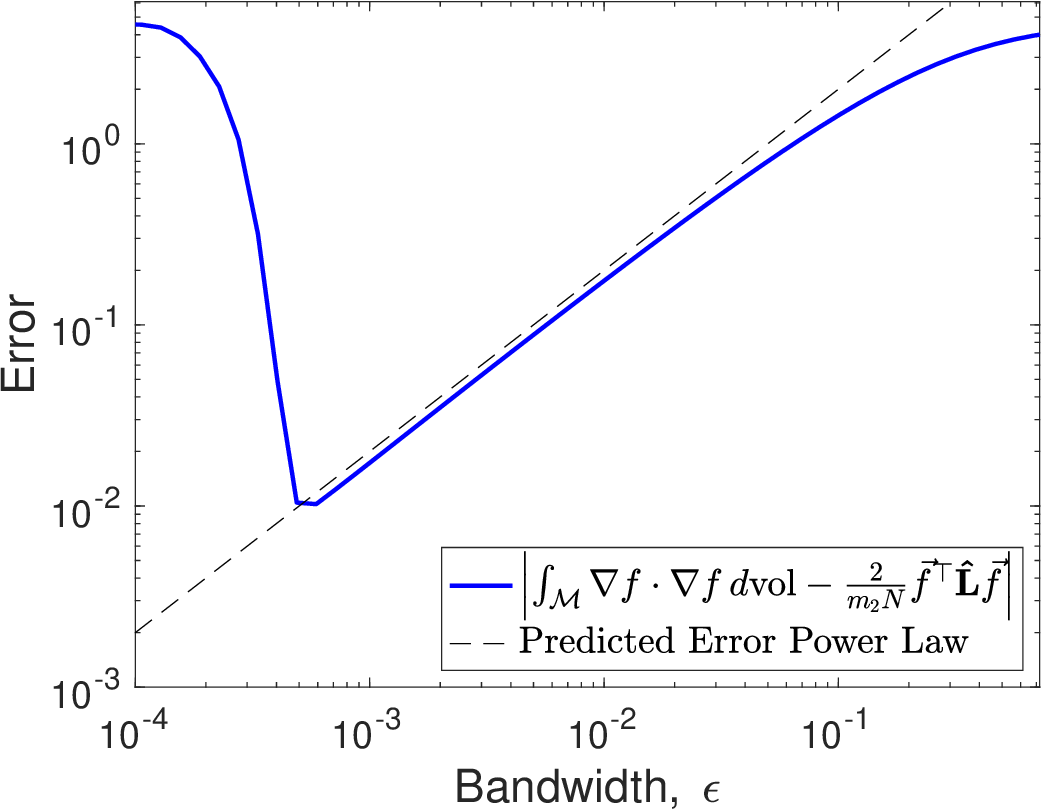}
\caption{Verifying \eqref{nonuniform} computing the exact integral and the graph Laplacian estimator on the interval $[-1,1]$ applied to the function $\phi(x) = f(x) = x^4$. Left: We compare the estimate to the true integral value as a function of the bandwidth parameter, $\epsilon$.  Right: Error vs. bandwidth for the graph Laplacian as a weak-sense estimator.}
\label{figure4}
\end{figure}  


\section{Applications to Elliptic and Parabolic PDEs}\label{PDEapps}
Now that we have shown consistency of the estimator of the weak Laplacian, we provide  numerical experiments to demonstrate the validity of this method. For simplicity of presentation, we shall write  our formulation for problems with Dirichlet and Neumann boundary conditions. However, the approach could be applied to problems with Robin or mixed boundary conditions.

It should be noted that while the previous results of this paper prove consistency of the method, they make no mention of explicit error bounds in terms of the number of data points used in the estimator. In order to provide such bounds, an analysis of the variance of the estimator. Similar variance analyses have been done in the non boundary case in for instance \cite{hein2007convergence,singer,singer2012vector} as well as many others. Although such a variance analysis likely would rely on the results of this paper, it would require a different set of techniques and thus will be explored in a future work.

\subsection{Continuous PDEs}

Let $\Omega \subset \mathbb{R}^n$ be an open bounded set with Lipschitz 
boundary $\partial\Omega$. If $L^2(\Omega)$ denotes the set of square integrable 
functions then we define the Sobolev space 
$
H^1(\Omega) := \{ v \in L^2(\Omega) \, : \, \nabla v \in L^2(\Omega) \}
$
where $\nabla v = (\partial_{x_i}v)_{i=1}^n$ denotes the weak gradient.
We also define a closed subspace of $H^1(\Omega)$ as $H^1_0(\Omega)$
which is the set functions in $H^1(\Omega)$ which are zero on 
$\partial\Omega$ in the trace sense. 
We shall denote the dual space of $H^1(\Omega)$ and $H^1_0(\Omega)$
by $H^1(\Omega)^*$ and $H^{-1}(\Omega)$, respectively and  
the duality pairing between these spaces as $\langle \cdot,\cdot \rangle$. 
Finally $H^{\frac12}(\partial\Omega)$ denotes the standard fractional  
order space. 

%
We start with the elliptic Dirichlet problem: Given 
$f \in H^{-1}(\Omega)$, $g \in H^{\frac12}(\partial\Omega)$ we are interested in solving
    \begin{align}\label{eq:dir}
        \begin{cases}
                 -\Delta u &= f \quad \mbox{in } \Omega \\
                         u &= g \quad \mbox{on } \partial\Omega .
        \end{cases}
    \end{align}
We impose the nonzero boundary condition using the classical lifting 
argument: 
Let $\tilde{g} \in H^1(\Omega)$ denote an extension of $g$ to $\Omega$. 
Notice that due to trace theorem, such an extension exists. 
We then write $u = w + \tilde{g}$ where $w|_{\partial\Omega} = 0$ in the 
trace sense. Then by Lax-Milgram Theorem, under the stated assumptions on 
the data $f,g$, there exists a unique weak solution $w \in H^1_0(\Omega)$ 
to \eqref{eq:dir} in the following sense
    \begin{equation}\label{eq:dir_weak}
        \int_\Omega \nabla w \cdot \nabla v \,dx 
        = \langle f,v \rangle - \int_\Omega \nabla \tilde{g} \cdot \nabla v \,dx  
        \quad \forall v \in H^1_0(\Omega) . 
    \end{equation}
    
Next we turn to the Neumann boundary value problem. Given $f\in H^1(\Omega)^*$,
$g \in L^2(\partial\Omega)$, we consider 
    \begin{align}\label{eq:neu}
        \begin{cases}
                 -\Delta u + u &= f \quad \mbox{in } \Omega \\
           \nabla u \cdot \eta  &= g \quad \mbox{on } \partial\Omega 
        \end{cases}
    \end{align}
where $\eta$ denotes the outward unit normal to $\partial\Omega$. 
Again by Lax-Milgram Theorem it is not difficult to see that under the stated 
assumptions on the data $f,g$ there exists a unique weak solution $u \in H^1(\Omega)$ 
to \eqref{eq:neu} in the following sense: 
    \begin{equation}\label{eq:neu_weak}
        \int_\Omega \nabla u \cdot \nabla v + u v \,dx 
        = \langle f,v \rangle + \int_{\partial\Omega} g v \,ds  
        \quad \forall v \in H^1(\Omega) . 
    \end{equation}
We also state the parabolic homogeneous Dirichlet problem, the Neumann problem is similar and is 
omitted for brevity: Given $f \in L^2(0,T;H^{-1}(\Omega))$ and $u_0 \in L^2(\Omega)$, we consider 
    \begin{align}\label{eq:neu_par}
        \begin{cases}
                 \partial_t u -\Delta u + u &= f \quad \mbox{in } \Omega \times (0,T) \\
          					 u  &= 0             \quad \mbox{on } \partial\Omega \times (0,T) \\
                             u  &= u_0           \quad \mbox{in } \Omega  \; .
        \end{cases}
    \end{align}
The notion of weak solution to \eqref{eq:neu_par} is: find $u \in L^2(0,T;H^1_0(\Omega)) \cap 
H^1(0,T;H^{-1}(\Omega))$ solving 
        \begin{equation}\label{eq:neu_par_weak}
            \langle \partial_t u, v \rangle + 
            \int_\Omega \nabla u \cdot \nabla v  \,dx 
            = \langle f,v \rangle 
            \quad \forall v \in H^1_0(\Omega)  
        \end{equation}
and almost every $t \in (0,T)$.

\subsection{Discrete System}    

Next we describe the linear algebraic systems we obtain after discretization of \eqref{eq:dir_weak}, 
\eqref{eq:neu_weak}, and \eqref{eq:neu_par_weak}.  Assume that we have $N$ nodes sampled on the manifold, and recall that the $N$-by-$N$ matrix ${\bf \hat L}$ and diagonal $N$-by-$N$ matrix ${\bf D}$ 
denote the discrete form of the Laplacian (in weak form) and the discretization of the integral over 
the entire manifold $\mathcal{M}$, respectively. Namely, 
	\[
		\int_\mathcal{M} \nabla u \cdot \nabla v \approx {\bf v}^\top {\bf \hat L} {\bf u}  \quad 
		\mbox{and} \quad 
		\int_\mathcal{M} f v \approx {\bf v}^\top {\bf D} {\bf f} .
	\]
We indicate the discrete boundary integral $\int_{\partial\mathcal{M}}$ as 
	\[
		\int_{\partial\mathcal{M}} g v = {\bf v}^\top {\bf B} {\bf g} 
	\]
where ${\bf B}$ is a diagonal $N$-by-$N$ matrix, whose diagonal entries are very close to zero for nodes that are far from the boundary, and ${\bf v}, {\bf g}$ specify the values of functions on all the nodes in the data set, but can be set to zero on interior nodes for the purposes of estimating the boundary integral since the corresponding diagonal entries of $B$ will be very close to zero. Let $N_{\rm int}$ denote the number of interior degrees of freedom. We note that the interior degrees of freedom can be identified using the estimated distance to boundary function as the nodes, $x_i$, such that $b_{x_i}>\epsilon/2$.  Choosing $\epsilon/2$ means that the boundary layer will be half the width of the $\epsilon$ tube around the boundary, which we found empirically to be an effective width.  The 
discrete form of \eqref{eq:dir_weak} is given by 
	\[
		{\bf \hat L}({\rm f_{dof}},{\rm f_{dof}}) {\bf w} 
			= {\bf D}({\rm f_{dof}},:) {\bf f} - {\bf \hat L}({\rm f_{dof}},:) {\bf \tilde{g}}
	\]
where $f_{\rm dof}$ indicates the interior degrees of freedom.  Then the discrete solution on the 
interior nodes is ${\bf u}({\rm f_{dof}}) = {\bf w}$. We note that $\rm f_{dof}$ refers to the indices of the interior nodes and the colon denotes the inclusion of all the nodes.  So the ${\bf \hat L}({\rm f_{dof}},{\rm f_{dof}})$ matrix on the left hand side of the above equation is $N_{\rm int}$-by-$N_{\rm int}$, whereas the matrices on the right hand side are $N_{\rm int}$-by-$N$.  Note that while these matrices can be quite large (depending on the number of nodes used to sample the domain) both ${\bf D}$ and ${\bf B}$ are diagonal, and ${\bf \hat L}$ is well approximated by a sparse matrix due to the fast decay of the kernel function used to construct it.

The discrete form of the the system \eqref{eq:neu_weak} is
	\[
		({\bf \hat L} + {\bf D}) {\bf u} = {\bf D} {\bf f} + {\bf B} {\bf g}
	\]
	in this case, since we are solving the Neumann problem and $\hat L$ is consistent on the whole domain, we do not have to restrict to the interior nodes, so here all the matrices are $N$-by-$N$.  
	
Finally, we describe the discretization of \eqref{eq:neu_par_weak}. In addition to the spatial 
discretization, we use Backward Euler to discretize in time. Let the number of time sub-intervals
be $K$ and the time step size is $\tau = T/K$. Then given ${\bf u}^0 = {\bf u}_0$, for $k = 1,\dots,K$,
we solve 
	\[
		\left( {\bf D}({\rm f_{dof}},{\rm f_{dof}}) + \tau {\bf \hat L}({\rm f_{dof}},{\rm f_{dof}}) \right) {\bf u}^k 
			= \tau {\bf D}({\rm f_{dof}},:) {\bf f}^k  + {\bf D}({\rm f_{dof}},{\rm f_{dof}}) {\bf u}^{k-1}
	\]
where this last equation is again restricted to interior nodes so that all the matrices are $N_{\rm int}$-by-$N_{\rm int}$ except for ${\bf D}({\rm f_{dof}},:)$ which is $N_{\rm int}$-by-$N$.

\subsection{Numerical Examples}

With the help of several examples, next we show that the approach introduced in this paper, 
can help solve the boundary value problems \eqref{eq:dir_weak}, \eqref{eq:neu_weak}, and
\eqref{eq:neu_par_weak}. In the first 5 examples, we let $\Omega = (0,1)^2$. We first 
consider elliptic problems with both Dirichlet and Neumann boundary conditions. Afterwards,
we illustrate the applicability of our approach on time-dependent PDE with Dirichlet boundary
conditions. 
For numerical approximation in these 5 examples, we partition $\Omega$ into 100 uniform 
cells in each direction. 
Our final example is a semi-sphere with Dirichlet boundary conditions.
\begin{example}[Elliptic homogeneous Dirichlet]
{\rm 
In \eqref{eq:dir} we set $g \equiv 0$, therefore $\tilde{g} \equiv 0$ (cf. \eqref{eq:dir_weak}). 
Consider the exact solution $u(x,y) = \sin(2\pi x) \sin(2\pi y)$, then 
$f(x,y) = 8\pi^2 \sin(2\pi x) \sin(2\pi y)$. The error between the exact solution $u$ and it's
approximation $u_h$ using our proposed method in $L^2$-norm is: $\|u-u_h\|_{L^2(\Omega)}
= 1.541989$e-02. Figure~\ref{f:dhom} shows a visual comparison between the computed
and the exact solution. 
\begin{figure}[h!]
	\centering
	\includegraphics[width=0.7\textwidth]{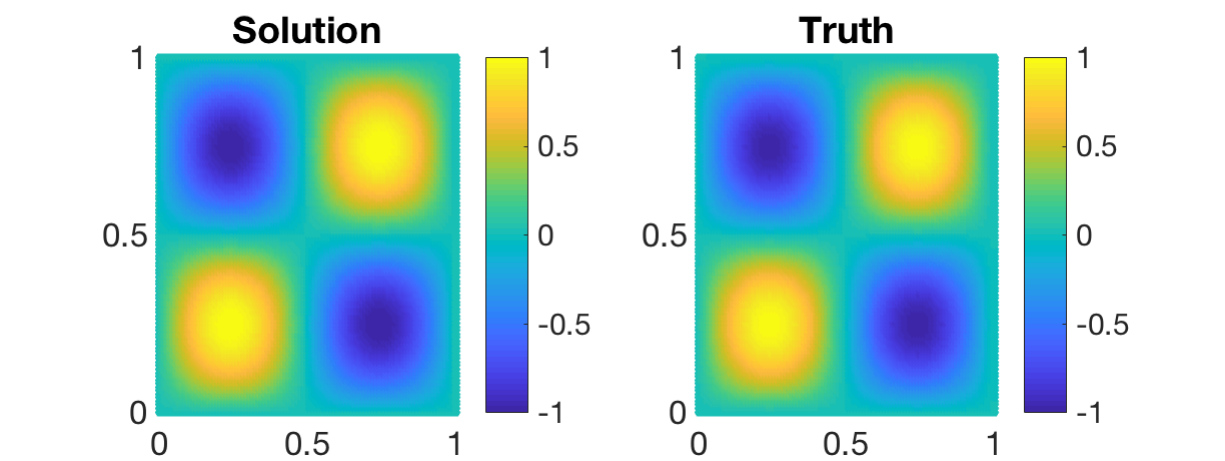}
	\caption{Left: Solution computed using our approach. Right: Exact solution.}
	\label{f:dhom}
\end{figure}
}
\end{example}

\begin{example}[Elliptic nonhomogeneous Dirichlet]
{\rm 
Let the exact solution $u = x^2+y^2$. We set $g = u|_{\partial\Omega}$. The error between the exact solution $u$ and it's
approximation $u_h$ using our proposed method in $L^2$-norm is: $\|u-u_h\|_{L^2(\Omega)}
= 6.378652$e-03. Figure~\ref{f:dnhom} shows a visual comparison between the computed
and the exact solution. 
\begin{figure}[h!]
	\centering
	\includegraphics[width=0.7\textwidth]{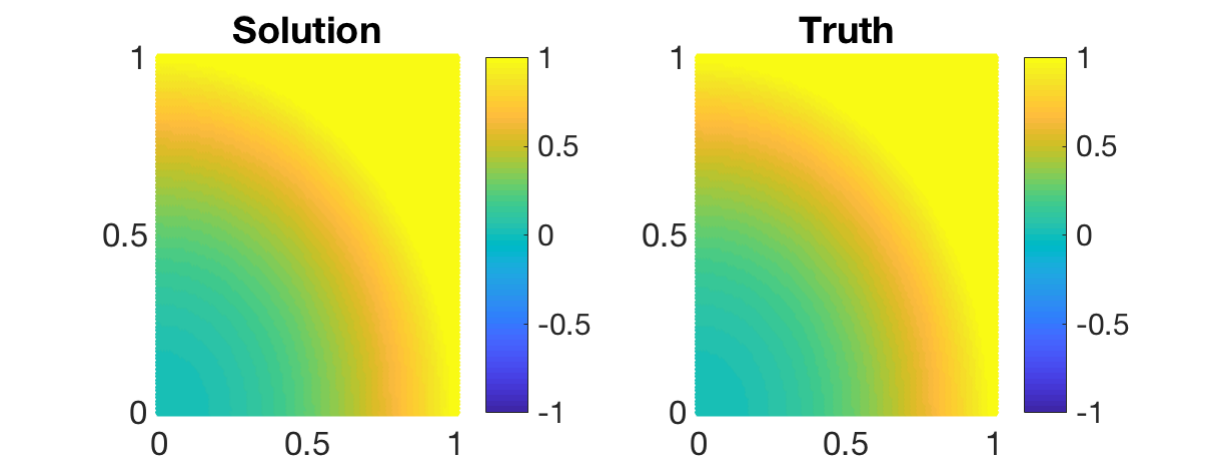}
	\caption{Left: Solution computed using our approach. Right: Exact solution.}
	\label{f:dnhom}
\end{figure}
}
\end{example}

\begin{example}[Elliptic homogeneous Neumann]
{\rm 
In \eqref{eq:neu_weak} we set $g \equiv 0$. Consider the exact solution $u(x,y) = \cos(2\pi x) \cos(2\pi y)$, 
then $f(x,y) = (8\pi^2+1) \cos(2\pi x) \cos(2\pi y)$. The error between the exact solution $u$ and it's
approximation $u_h$ using our proposed method in $L^2$-norm is: $\|u-u_h\|_{L^2(\Omega)}
= 2.125979$e-02. Figure~\ref{f:nhom} shows a visual comparison between the computed
and the exact solution. 
\begin{figure}[h!]
	\centering
	\includegraphics[width=0.7\textwidth]{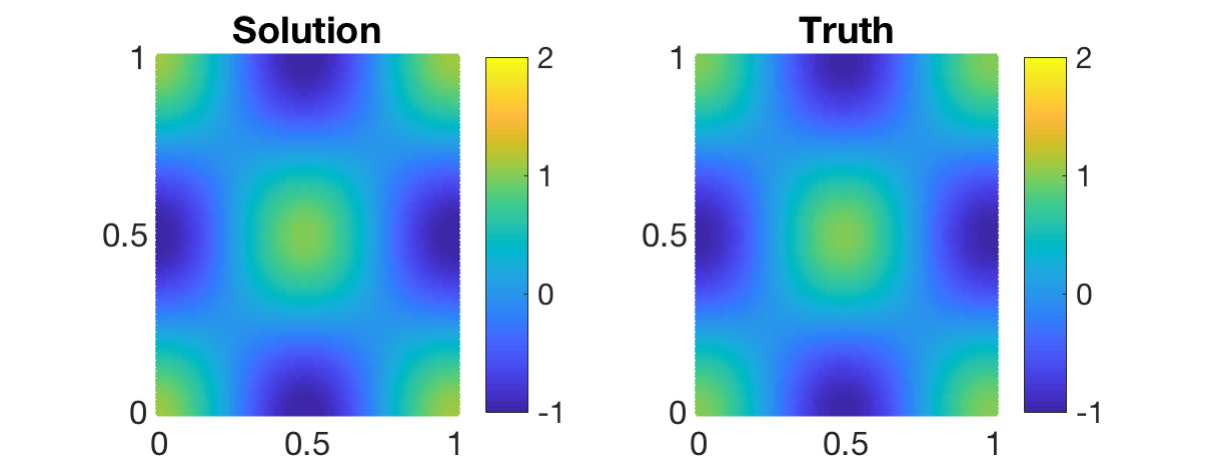}
	\caption{Left: Solution computed using our approach. Right: Exact solution.}
	\label{f:nhom}
\end{figure}
}
\end{example}

\begin{example}[Elliptic nonhomogeneous Neumann]
{\rm 
Let the exact solution $u = x^2+y^2$. We set $g = \nabla u \cdot \eta$. The error between the exact solution $u$ and it's
approximation $u_h$ using our proposed method in $L^2$-norm is: $\|u-u_h\|_{L^2(\Omega)}
= 8.303406$e-02 . Figure~\ref{f:nnhom} shows a visual comparison between the computed
and the exact solution. 
\begin{figure}[h!]
	\centering
	\includegraphics[width=0.7\textwidth]{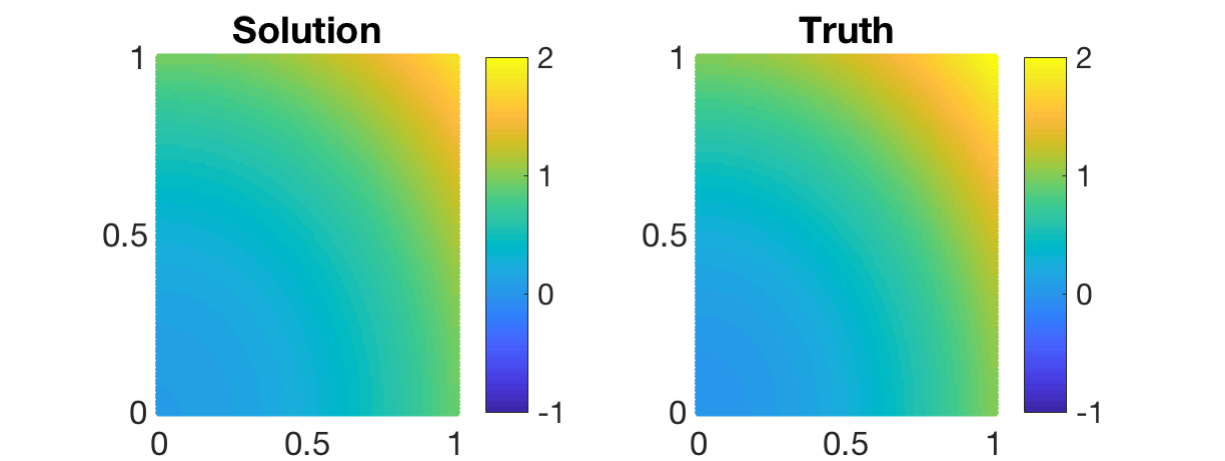}
	\caption{Left: Solution computed using our approach. Right: Exact solution.}
	\label{f:nnhom}
\end{figure}
}
\end{example}

\begin{example}[Parabolic homogeneous Dirichlet]
{\rm 
In \eqref{eq:neu_par_weak} we set $T = 1$. Consider the exact solution 
$u(x,y) = \sin(2\pi x) \sin(2\pi y) e^{-t}$, then $f(x,y) = (8\pi^2-1) u(x,y)$. 
We apply Backward-Euler scheme to do the time discretization with number of time steps equal to 50.
The error between the exact solution $u$ and it's approximation $u_h$ using our proposed 
method in $L^2$-norm is: $\|u-u_h\|_{L^2(0,T;L^2(\Omega))} = 9.902258$e-03.
}
\end{example}

\begin{example}[Dirichlet on the hemisphere]
{\rm 
We now consider \eqref{eq:dir} on the hemisphere $\mathcal{M} = \{(x,y,z) \in \mathbb{R}^3 \, : \, x^2+y^2+z^2 = 1, z\geq 0\}$.  This two dimensional manifold with boundary can be seen as the image of $(\theta,\phi) \in [0,\pi/2]\times [0,2\pi]$ under the embedding function, $\iota:[0,\pi]\times [0,2\pi] \to \mathbb{R}^3$ defined by
$$\iota(\theta,\phi)=
\left[
\begin{array}{c}
\sin\theta \cos\phi \\ \sin\theta \sin\phi \\ \cos\theta 
\end{array}
\right]$$
where $\theta$ is the colatitude and $\phi$ is the azimuthal angle.  The pullback metric in these coordinates is
$$
g(x)=D\iota^T D\iota = 
\left[
\begin{array}{cc}
\cos^2\theta \cos^2 \phi +\cos^2\theta \sin^2 \phi +\sin^2\theta& 0\\
0&\sin^2\theta
\end{array}
\right] = 
\left[
\begin{array}{cc}
1 & 0\\
0&\sin^2\theta
\end{array}
\right].
$$
The Laplacian, $\Delta$, on $\mathcal{M}$ in these coordinates is given by 
\begin{eqnarray*}
\Delta f &=& \frac{1}{\sqrt{|g|}} 
\left[ \dT\ \  \dP \right]
\left(\sqrt{|g|}\ \  g^{-1}
\left[
\begin{array}{c}
\frac{\partial f}{\partial \theta}\\
\frac{\partial f}{\partial \phi}
\end{array} 
\right] \right) = \cot \theta\ \frac{\partial f}{\partial\theta}+\frac{\partial^2 f}{\partial\theta^2}+\csc^2\theta\ \frac{\partial^2 f}{\partial\phi^2}.
\end{eqnarray*}
We can avoid blowup at $\theta=0$ by assuming a solution of the form $u(\theta,\phi) = \sin^2(\theta)\tilde u(\theta,\phi)$, and in this case we consider $u(\theta,\phi) = \sin^2(\theta)\sin(3\phi)/2$ which leads to 
\[ f = -\Delta u = (5/2+3\sin^2(\theta))\sin(3\phi). \]
Using this $f$ as the right-hand-side and using the true value of $u$ on the boundary as a Dirichlet boundary condition, $g=u$ on $\partial\mathcal{M}$, we then solve \eqref{eq:dir} using the estimator of the Laplacian ${\bf \hat L}$.  The resulting solution estimate $u_h$ is compared in Fig.~\ref{f:dirSphere}.
The error between the exact solution $u$ and it's approximation $u_h$ using our proposed 
method in $L^2$-norm is: $\|u-u_h\|_{L^2(\mathcal{M})} = 5.067884$e-03.
\begin{figure}[h!]
	\centering
	\includegraphics[width=0.4\textwidth]{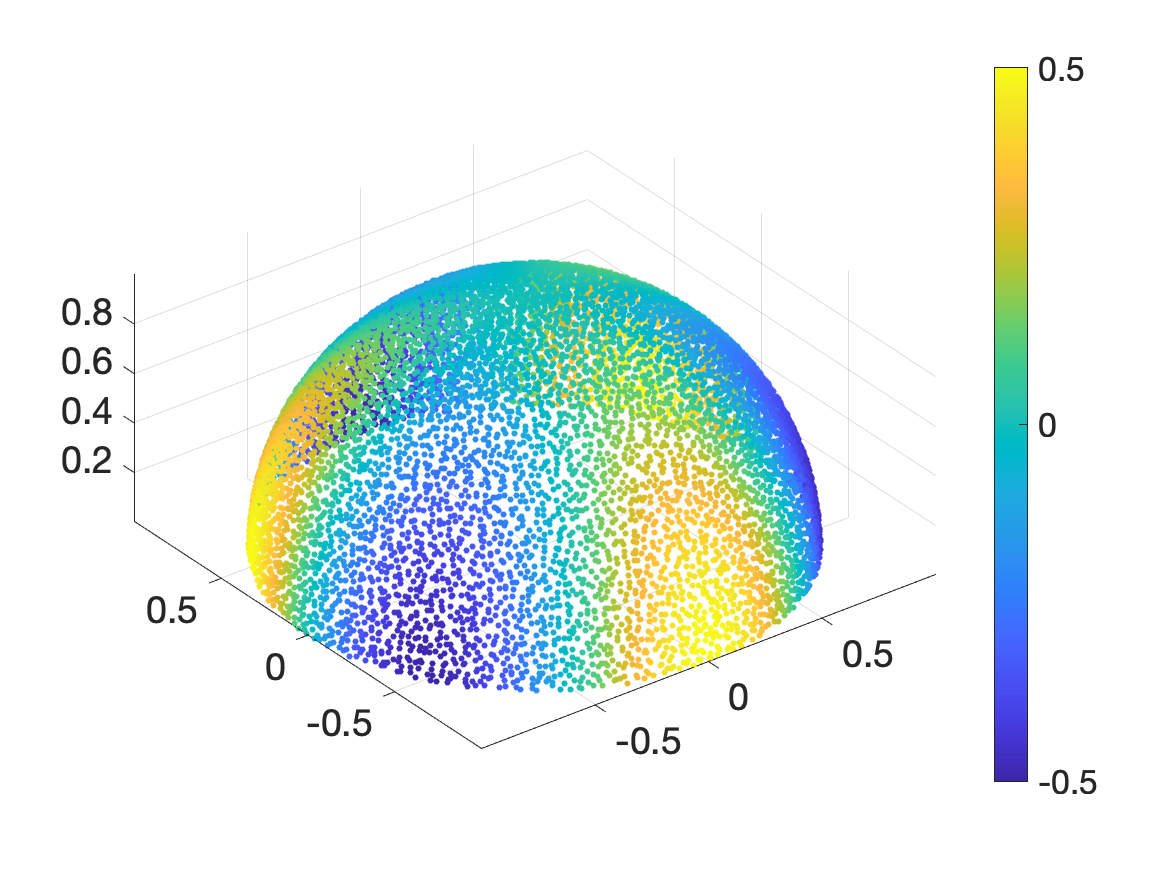}\includegraphics[width=0.4\textwidth]{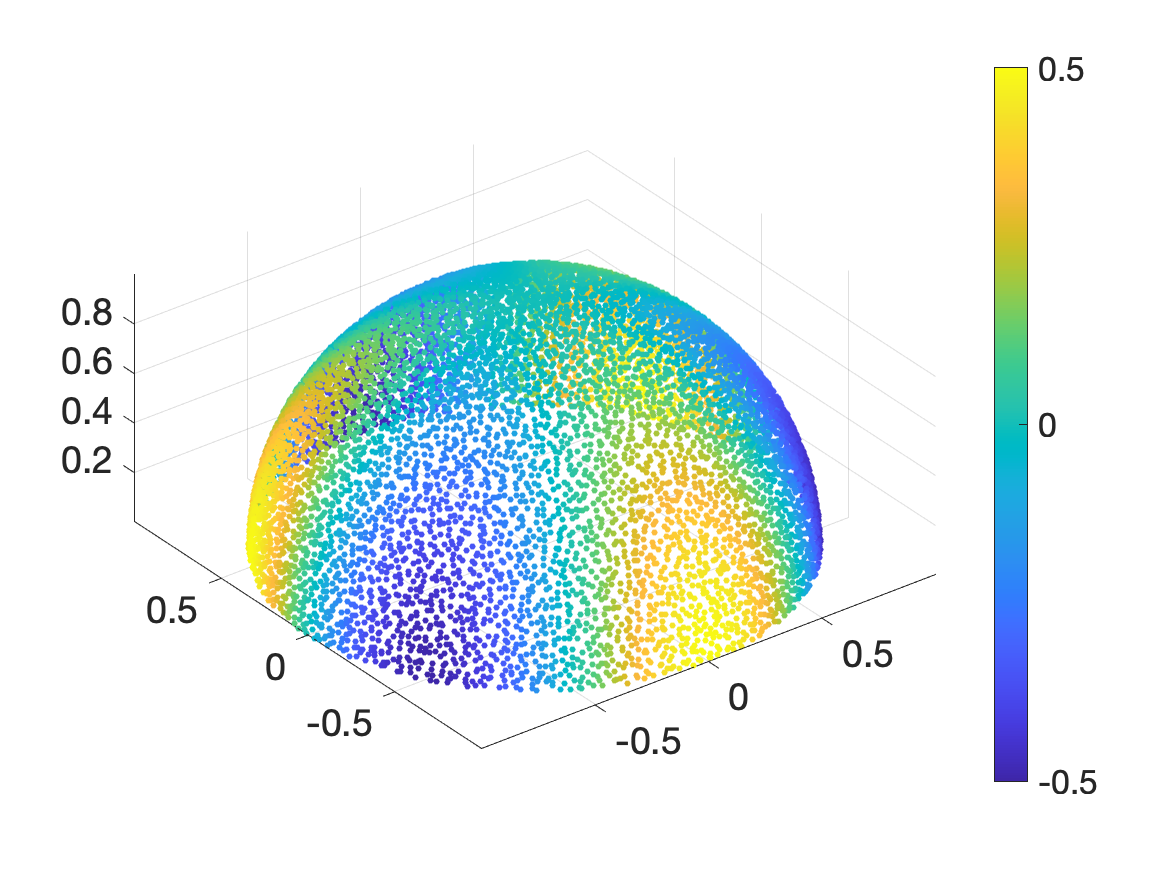}
	\caption{Left: Solution computed using our approach. Right: Exact solution.}
	\label{f:dirSphere}
\end{figure}
}
\end{example}

In all the above examples, we observe that the solutions computed using our approach are highly 
accurate.  We emphasize that the exact same code was used to solve the problem on the hemisphere as was used on the unit square.  This is the advantage of the these diffusion maps based approaches, all that is needed is points sampled on the manifold.

\section{Discussion}
In this work, we have provided an analysis of the bias of the kernel averaging operator associated to the diffusion maps graph Laplacian. One of the main contributions of this work is the use of semigeodesic coordinates for making the bias analysis for points near the boundary. Using these coordinates, we are able to obtain new asymptotic estimates of the kernel averaging operator as well as prove consistency of a boundary integral estimator. It should be noted that our convergence results only address the bias of the estimator, that is, as $\veps \rightarrow 0$. In order to discuss a full treatment of error rates involving the number of data points, a treatment of variance must also be done.

\section{Acknowledgements}
The first author is partially supported by NSF DMS-2153561. The second author is partially supported by NSF DMS-1854204 and NSF DMS-2006808. The last author is partially supported by NSF grants  DMS-1818772, DMS-1913004 and the Air Force Office of Scientific Research 
under Award NO: FA9550-19-1-0036.
\bibliographystyle{plain}
\bibliography{refs}

\appendix

\section{Proof of Lemma \ref{momentslemma}}\label{thm3proof}

In this section we prove Lemma \ref{momentslemma} connecting the integral of the first moment of a kernel near the boundary to the second moment of the kernel in the interior.
\begin{proof}[Proof of Lemma \ref{momentslemma}]
Since $m_1^\partial$ has fast decay, we can localize the integral to an $\epsilon^{\gamma}$ region for any $\gamma \in (0,1)$ by the same argument as in the proof of Lemma \ref{lemma1}.  Thus, we have
\begin{align*}
\int_0^\infty m^\partial_1(b_x) \ db_x  &= \int_0^{\veps^\gamma}m_1^\partial(b_x)  db_x +\mc{O}(\veps^z)\\
&= \int_0^{\veps^\gamma} \int_{-b_x}^{\veps^\gamma}\int_{\RR^{m-1}} u^m k(\norm{u}{\sem}^2)  du^1 ... du^{m-1}du^m db_x
\end{align*}
We notice that there is considerable symmetry in the domain of integration between the $b_x$ and $u^m$ variables. Since $u^mk(\norm{u}{\sem}^2)$ is an odd function with respect to $u^m$, we obtain cancellation of the domain in the area indicated in red in Figure \ref{momentsfig}.
\begin{figure}
\begin{center}
\includegraphics[scale=.2]{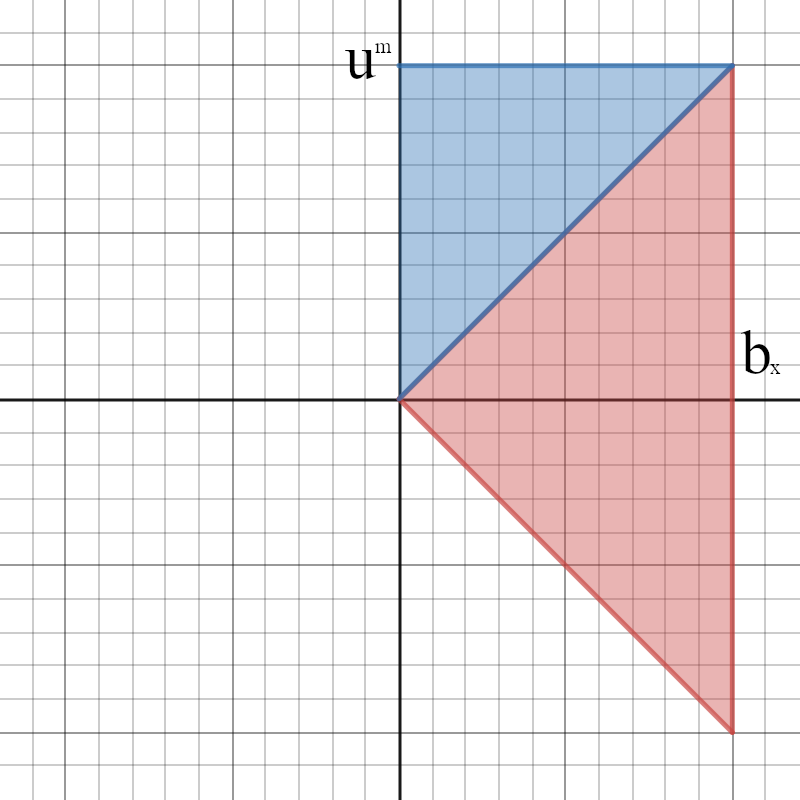}
\end{center}
\caption{\label{momentsfig} The domain of integration for $b_x$ and $u^m$. The red region indicates the domain which cancels due to symmetry. The remaining blue region is evaluated.}
\end{figure}
This leads to the following simplification:
\[
\int_0^{\veps^\gamma} \int_{-b_x}^{\veps^\gamma}\int_{\RR^{m-1}} u^m k(\norm{u}{\sem}^2)  du^1 ... du^{m-1}du^m db_x  = \int_0^{\veps^\gamma} \int_{b_x}^{\veps^\gamma}\int_{\RR^{m-1}} u^m k(\norm{u}{\sem}^2)  du^1 ... du^{m-1}du^m db_x
\]
We then apply Fubini's theorem to obtain,
\begin{align*}
&=\int_0^{\veps^\gamma} \int_0^{u^m}\int_{\RR^{m-1}} u^m k(\norm{u}{\sem}^2)  du^1 ... du^{m-1} db_x du^m\\
&=\int_0^{\veps^\gamma} \int_{\RR^{m-1}} \int_{0}^{u^m} u^m k(\norm{u}{\sem}^2)  db_x du^1 ... du^{m-1}  du^m\\
&= \int_0^{\veps^\gamma} \int_{\RR^{m-1}} (u^m-0) u^m k(\norm{u}{\sem}^2)  du^1 ... du^{m-1}  du^m\\
&= \int_0^{\veps^\gamma} \int_{\RR^{m-1}} (u^m)^2 k(\norm{u}{\sem}^2)du^1 ... du^m\\
&= \frac{1}{2}\int_{-\veps^\gamma}^{\veps^\gamma} \int_{\RR^{m-1}} (u^m)^2 k(\norm{u}{\sem}^2)du^1 ... du^m +\mc{O}(\veps^z) \\
&= \frac{m_2}{2} +\mc{O}(\veps^z).
\end{align*}
where in the last line we expand the integral to all of $\mathbb{R}^m$ and recover the second moment $m_2 = \int_{\mathbb{R}^m} (u^i)^2 k(||u||^2) \, du$ which is the same for any choice of $i$ due to the radial symmetry of the kernel. 
\end{proof}
\section{Estimating the normal vector field and distance to the boundary}\label{background}

In \cite{BERRY2017}, following the results of \cite{diffusion}, the authors extended the expansion \eqref{interiorexpansion} to manifolds with boundary as,
\begin{align}\label{boundaryexpansion} 
    \epsilon^{-m}\mathcal{K}f(x)  = m_0^\partial(x)q(x)f(x) + \mathcal{O}(\epsilon) + \epsilon^{-m}\textup{Error}_{\textup{Quad}}(N,f,q). \end{align}
where the coefficient $m_0^\partial$ is no longer constant but depends on the distance $b_x$ from $x$ to the boundary (defined as the infimum over smooth curves). 
As shown in \cite{BERRY2017}, the coefficients $m_\ell^\partial(x)$ appearing in \eqref{boundaryexpansion} for manifolds with boundary are given in \eqref{moments} (repeated here for clarity)
\begin{align*} 
m^{\partial}_\ell(x) = \int_{\{z  \in \mathbb{R}^m \, | \, z\cdot \eta_x < b_x/\epsilon\}} (z\cdot \eta_x)^\ell k(|z|^2)\, dz =  \int_{\mathbb{R}^{m-1}}\int_{-\infty}^{\dM_x/\epsilon} z_m^\ell k\left(|z|^2 \right) \, dz_m dz_1 \cdots dz_{m-1} \; . 
\end{align*}
The vector field $\eta_x$ is equal to the outward pointing normal when $x \in \partial \mathcal{M}$.  We can smoothly extend the vector field $\eta_x$ to a tubular neighborhood of the boundary called a \emph{normal collar} as discussed in Section \ref{prelims}.  It can easily be seen that when $b_x > \epsilon$ (meaning that $x$ is further from the boundary than $\epsilon$) these reduce to the formulas \eqref{coeff} up to higher order terms in $\epsilon$.

For the exponential kernel,
\begin{equation} \label{expker} k(z) = \exp\left(-z \right) \end{equation}
we can explicitly compute
\begin{align}\label{gaussianm} 
m^{\partial}_0(x) = \frac{\pi^{m/2}}{2}(1 + \textup{erf}(\dM_x/\epsilon)), \quad\quad 
m^{\partial}_1(x) =  -\frac{\pi^{(m-1)/2}}{2} \exp\left(-\frac{\dM_x^2}{\epsilon^2}\right). \nonumber 
\end{align}
These moments are used in the Section \ref{expansion} to extend the expansion \eqref{boundaryexpansion} to higher order terms.

The motivation for the expansion \eqref{boundaryexpansion} in \cite{BERRY2017} was to analyze the standard Kernel Density Estimator (KDE) for manifolds with boundary.  The standard KDE is (up to a constant) given by,
\[ q_{\epsilon,N}(x) \equiv \epsilon^{-m}\mathcal{K}1(x) =  \frac{1}{N \epsilon^{m}}\sum_{i=1}^N k\left(\frac{|x-X_i |^2}{\epsilon^2}\right) \]
and \eqref{boundaryexpansion} implies that
\begin{equation}\label{kdee} \mathbb{E}[q_{\epsilon,N}(x)] = m_0^\partial(x)q(x) + \mathcal{O}(\epsilon). \end{equation}
For manifolds without boundary, $q_{\epsilon,N}(x)$ can be made consistent after dividing by the normalization constant $m_0$ from \eqref{coeff}.  For manifolds with boundary, as a consequence of \eqref{boundaryexpansion} we see that $q_{\epsilon,N}(x)$ is not consistent at the boundary.  In \cite{BERRY2017} it has been shown how to fix this estimator by estimating the distance to the boundary $\dM_x$. We briefly summarize this method since it will be a key tool in constructing boundary integral estimators as defined in Section \ref{boundaryint}.

Since the standard estimator mixes information about the density and distance to the boundary, additional information is needed in order to estimate the density.  Thus, in \cite{BERRY2017} the Boundary Direction Estimator (BDE) was introduced, which is defined as,
\[ \mu_{\epsilon,N}(x) \equiv \frac{1}{N \epsilon^{m}}\sum_{i=1}^N k\left(\frac{|x-X_i |^2}{\epsilon^2}\right)\frac{(X_i-x)}{\epsilon}. \]
Notice that the BDE is a kernel weighted average of the vectors pointing from the specified point $x$ to all the other data points $\{X_i\}$.  The kernel weighting ensures that only the nearest neighbors of the point $x$ contribute significantly to the summation.  Moreover, for data points in the interior of the manifold, and for sufficiently small bandwidth parameter $\epsilon$, we expect the nearest neighbors to be evenly distributed in all the directions tangent to the manifold. The resulting cancellations imply that the summation should result in a relatively small value (it is shown to be order-$\epsilon$ in \cite{BERRY2017} for points further than $\epsilon$ from the boundary). On the other hand, when $x$ is on the boundary, if we look in the direction normal to the boundary we expect all the data points to be on one side of $x$, and thus the BDE will have a significant component in exactly the normal direction (inward pointing since we are averaging vectors pointing into the manifold). 
For points near the boundary (relative to the size of the bandwidth $\epsilon$) this effect will be diminished smoothly until the distance to the boundary becomes greater than the bandwidth and we return to the case of an interior point.  This intuitive description of the behavior of the BDE was made rigorous in \cite{BERRY2017} by showing that
\begin{equation}\label{bdee} \mathbb{E}[\mu_{\epsilon,N}(x)] = \eta_x q(x) m_1^{\partial}(x) + \mathcal{O}(\epsilon \nabla q(x), \epsilon q(x)) \end{equation}
where $\eta_x \in T_x\mathcal{M}$ is a unit vector pointing towards the closest boundary point (for $x\in \partial\mathcal{M}$, $\eta_x$ is the outward pointing normal). Moreover, $\mathbb{E}[\cdot]$ denotes the expected value.  Notice that since $m_1^{\partial}(x) < 0$ and $\eta_x$ is outward pointing, \eqref{bdee} implies that $\mu_{\epsilon,N}(x)$ points into the interior as expected from the above discussion.  

In \cite{BERRY2017} the authors combined the BDE with the classical density estimator. Indeed by dividing $\mu_{\epsilon,N}$ by 
$q_{\epsilon,N}$, the dependence on the true density $q(x)$ cancels and the result depends only on the distance to the boundary $b_x$, namely (dividing \eqref{bdee} by \eqref{kdee}),
\begin{align}\label{eq:expmuq}
\frac{{\mathbb E} \left[\mu_{\epsilon,N}(x) \right]}{\mathbb{E}\left[q_{\epsilon,N}(x)\right]} = \frac{\eta_x m_1^\partial(x) + \mathcal{O(\epsilon)}}{m_0^\partial(x) + \mathcal{O}(\epsilon)} = -\eta_x \frac{\pi^{-1/2}e^{-\dM_x^2/\epsilon^2}}{\left(1+\textup{erf}\left( \dM_x/\epsilon \right) \right)}+O(\epsilon). 
\end{align}  
A significant feature of this approach is that \eqref{eq:expmuq} can be easily estimated without any explicit dependence on the dimension $m$ of the manifold.  By combining the definitions of $\mu_{\epsilon,N}$ and $q_{\epsilon,N}$ above we find,
\[  
    \frac{\mu_{\epsilon,N}(x)}{q_{\epsilon,N}(x)} = \frac{\sum_{i=1}^N k\left(\frac{|x-X_i |^2}{\epsilon^2}\right)\frac{(X_i-x)}{\epsilon} }{\sum_{i=1}^N     k\left(\frac{|x-X_i |^2}{\epsilon^2}\right)} . 
\]

In order to compute the distance to the boundary, we compute the norm of the vector of the previous equation, and applying \eqref{eq:expmuq} we have 
\begin{equation}\label{eq:quadapp} 
   \mathbb{E}\left[ \left| \sqrt{\pi}\frac{\mu_{\epsilon,N}(x)}{q_{\epsilon,N}(x)} \right| \right] = \frac{e^{-\dM_x^2/\epsilon^2}}{\left(1+\textup{erf}\left( \dM_x/\epsilon \right) \right)} + \mathcal{O}(\epsilon). 
\end{equation}
This is now a scalar equation with a known quantity on the left-hand-side, so it remains only to invert the function on the right-hand-side in order to estimate the distance to the boundary $b_x$.  We note that this computation must be performed at each data point since we require an estimate of the distance to the boundary for each of our data points.  

While \cite{BERRY2017} used a Newton's method to solve \eqref{eq:quadapp} for $b_x$, we note that the right-hand-side is very well approximated by the following peicewise function
\[ \frac{e^{-\dM_x^2/\epsilon^2}}{\left(1+\textup{erf}\left( \dM_x/\epsilon \right) \right)} \approx  \left\{ \begin{array}{ll} 1 - 1.15 \frac{\dM_x}{\epsilon} + 0.35 \left(\frac{\dM_x}{\epsilon}\right)^2 & \dM_x < 1.4\epsilon \\ \frac{1}{2}\exp\left(\left(\frac{\dM_x}{\epsilon}\right)^2\right) & \dM_x \geq 1.4\epsilon \end{array} \right. \]
(The above quadratic approximation was derived by interpolating the function at $\frac{\dM_x}{\epsilon} \in \{0,1/2,1\}$ and for $b_x \geq 1.4\epsilon$ the denominator of \eqref{eq:quadapp} is very close to 2.)  Since the quadratic and the exponential are both explicitly invertible, this approximation avoids requiring a numerical inversion of the right-hand-side of \eqref{eq:quadapp}.

\begin{figure}[h]
\centering
\includegraphics[width=0.31\textwidth]{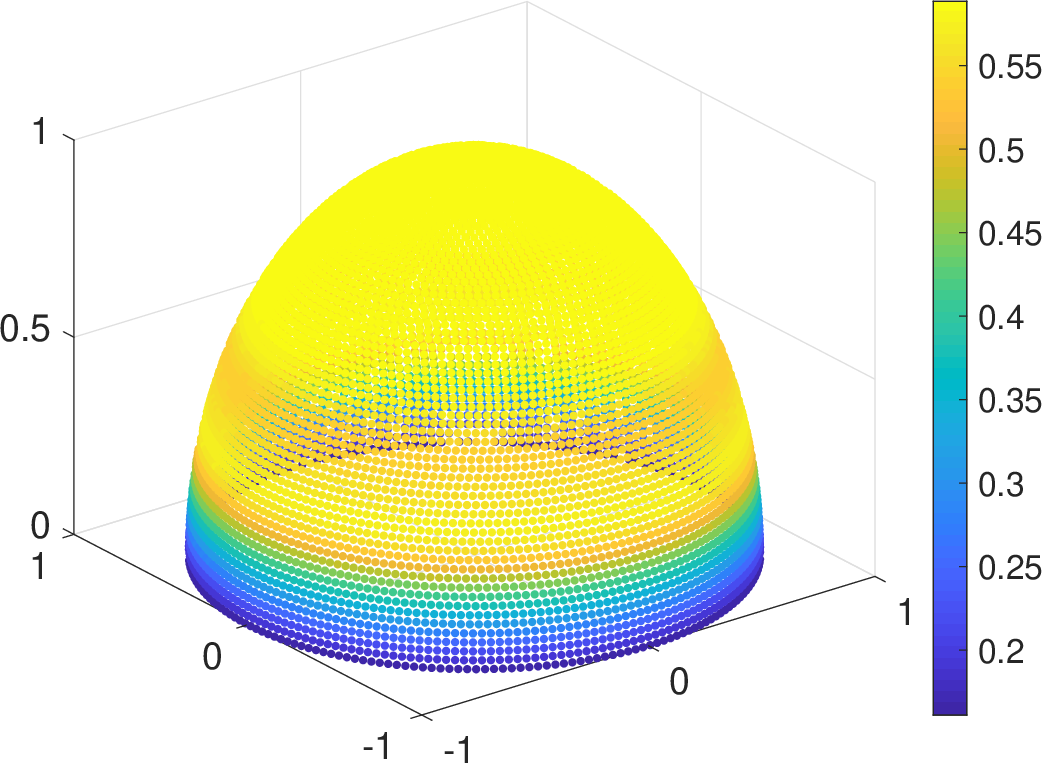}
\includegraphics[width=0.31\textwidth]{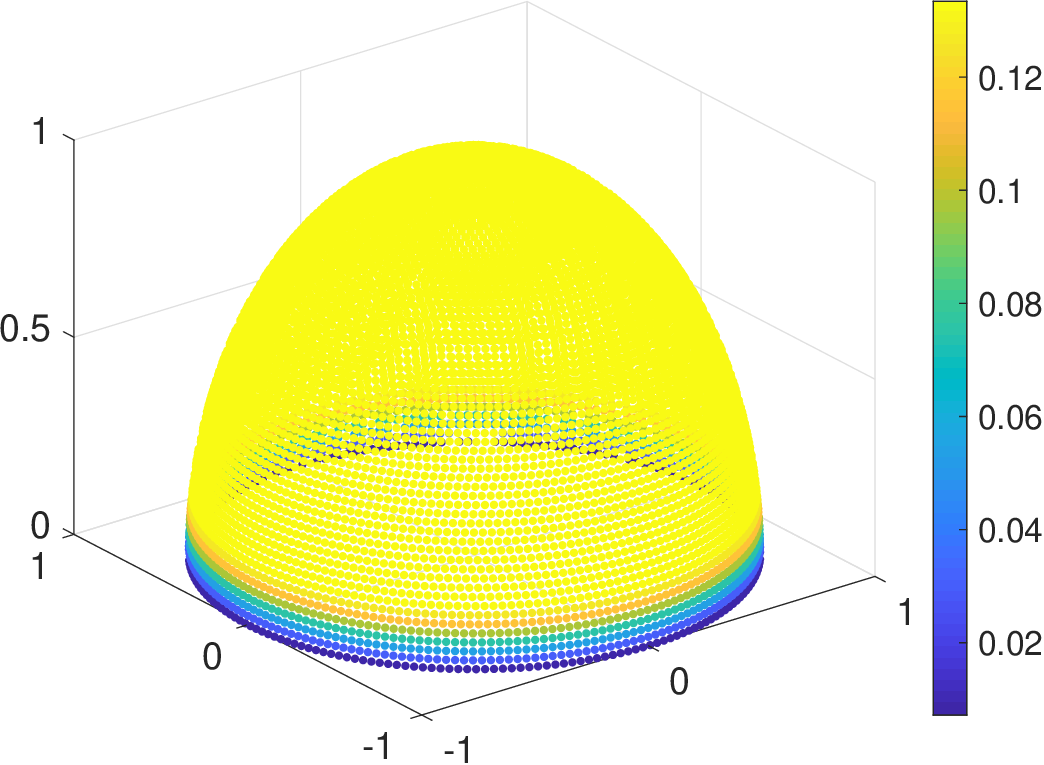} 
\includegraphics[width=0.31\textwidth]{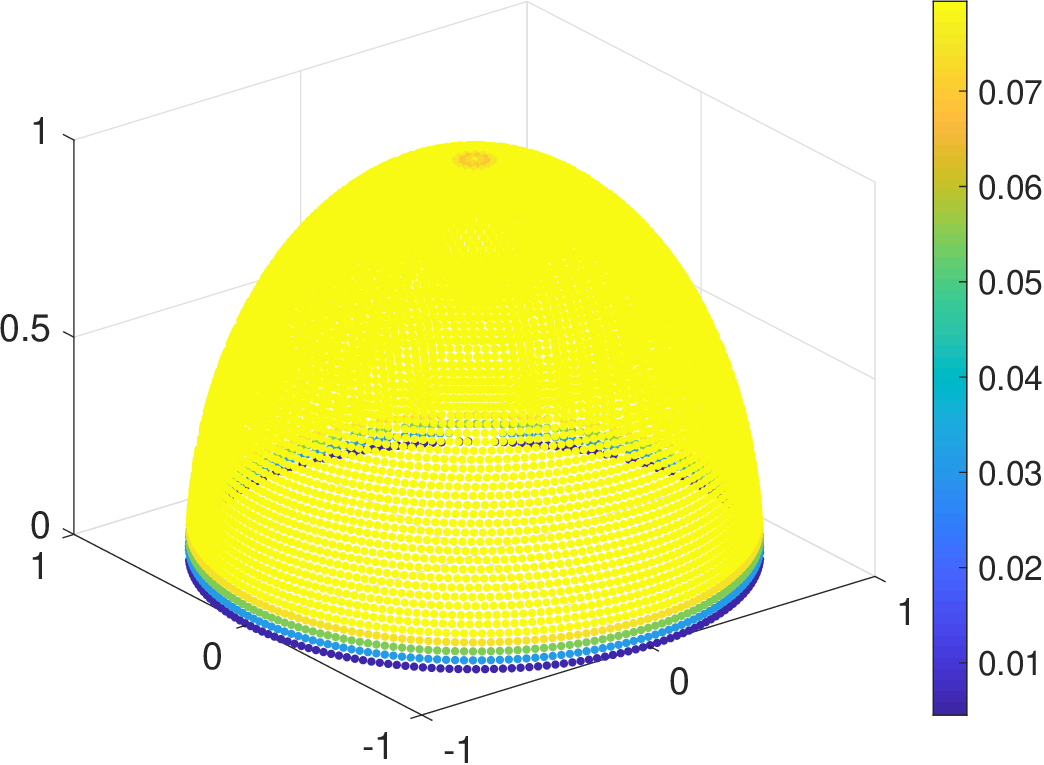} \\
\includegraphics[width=0.31\textwidth]{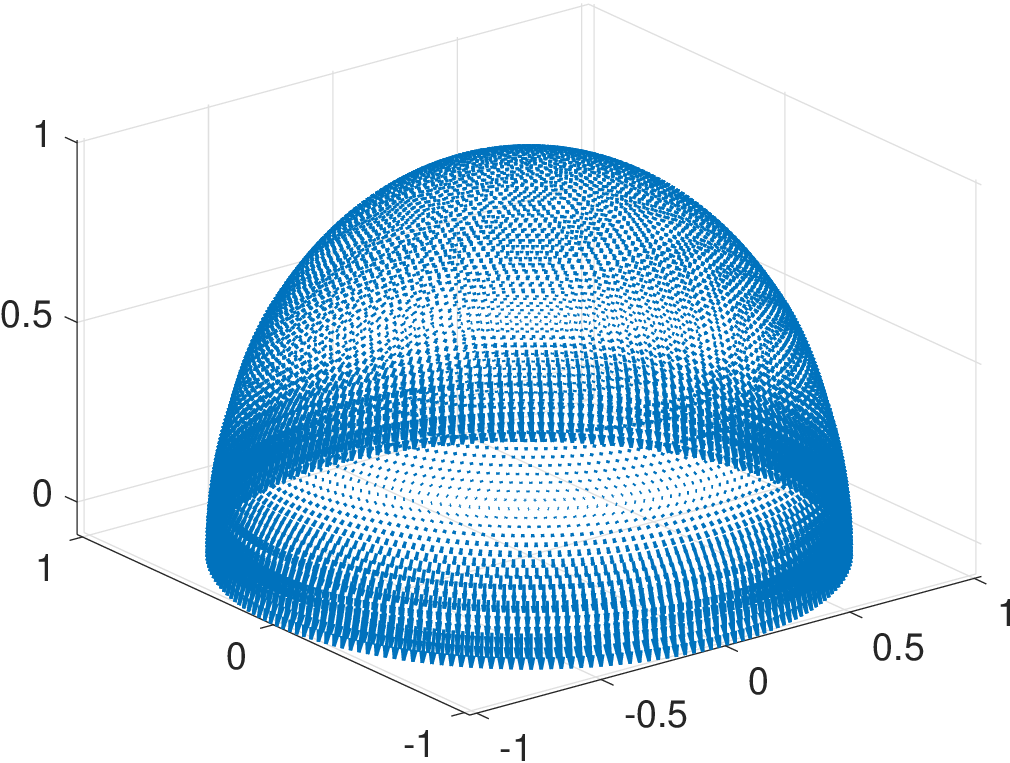}
\includegraphics[width=0.31\textwidth]{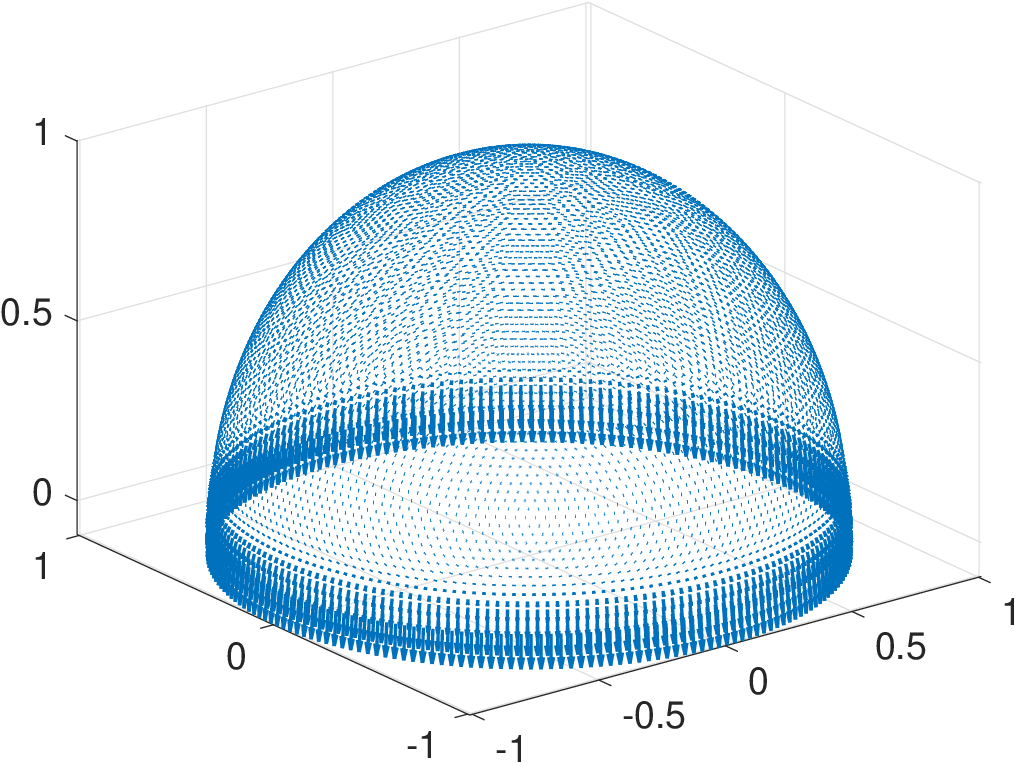} 
\includegraphics[width=0.31\textwidth]{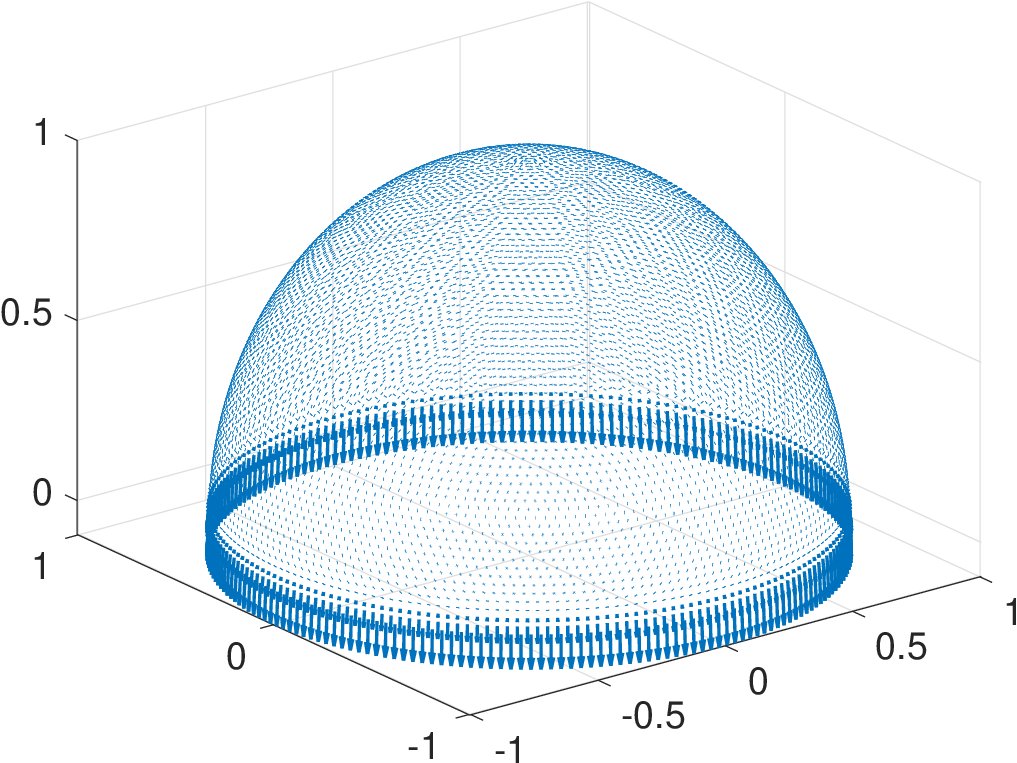}
\caption{Estimating the distance to the boundary (top row) and the normal vector field (bottom) for bandwidth parameters $\epsilon \in \{0.5,0.1,0.05\}$ (left to right).  The estimator is accurate up to a distance of approximately $1.5\epsilon$ from the boundary.}
\label{figure0}
\end{figure}  

We now have consistent estimators for both the direction of the boundary, $\eta_x$, and the distance to the boundary, $b_x$, and these will be essential in imposing the desired boundary conditions for our grid free solvers.

\end{document}